\theoremstyle{plain}
\newtheorem{theorem}{Theorem}[section]
\newtheorem{lemma}[theorem]{Lemma}
\newtheorem{conjecture}{Conjecture}
\newtheorem{corollary}[theorem]{Corollary}
\numberwithin{equation}{section} 
\theoremstyle{definition}
\DeclareMathOperator{\sinc}{sinc}
\newcommand{\Z}{\mathbb{Z}}
\newcommand{\Q}{\mathbb{Q}}
\newcommand{\R}{\mathbb{R}}
\newcommand{\C}{\mathbb{C}}
\newcommand{\sgn}{\mathrm{sgn}}
\newcommand{\re}{\operatorname{Re}}
\newcommand{\im}{\operatorname{Im}}
\renewcommand{\phi}{\varphi}
\newcommand{\Cone}{\mathscr{C}}
\begin{document}
\title[The H\"{o}rmander--Bernhardsson extremal function ]
{The H\"{o}rmander--Bernhardsson extremal function}

\author[A. Bondarenko]{Andriy Bondarenko}
\address{Department of Mathematical Sciences \\ Norwegian University of Science and Technology \\ NO-7491 Trondheim \\ Norway}
\email{andriybond@gmail.com}

\author[J. Ortega-Cerd\`{a}]{Joaquim Ortega-Cerd\`{a}}
\address{Department de Matem\`{a}tiques i Inform\`{a}tica, Universitat de
Barcelona, Gran Via 585, 08007 Barcelona, Spain and
CRM, Centre de Recerca Matemàtica, Campus de Bellaterra Edifici C, 08193
Bellaterra, Barcelona, Spain}
\email{jortega@ub.edu}

\author[D. Radchenko]{Danylo Radchenko}
\address{Laboratoire Paul Painlev\'e \\ Universit\'e de Lille \\ F-59655 Villeneuve d'Ascq \\ France}
\email{danradchenko@gmail.com}

\author[K. Seip]{Kristian Seip}
\address{Department of Mathematical Sciences, Norwegian University of Science and Technology (NTNU), 7491 Trondheim, Norway}
\email{kristian.seip@ntnu.no}

\thanks{Bondarenko and Seip were supported in part by Grant 334466 of the
Research Council of Norway. Ortega-Cerd\`a has been supported by grants
PID2021-123405NB-I00 and CEX2020-001084-M by the Agencia Estatal de
Investigacion and by the Departament de Recerca i Universitats, grant 2021 SGR
00087 and ICREA Academia. Radchenko acknowledges funding by the European Union 
(ERC, FourIntExP,
101078782).}

\begin{abstract}
We characterize the function $\varphi$ of minimal $L^1$ norm among all functions $f$ of exponential type at most $\pi$ for which $f(0)=1$. This function, studied by
H\"{o}rmander and Bernhardsson in 1993, has only real zeros $\pm \tau_n$, $n=1,2, \ldots$. Starting from the fact that $n+\frac12-\tau_n$ is an $\ell^2$ sequence, established in an earlier paper of ours, we identify $\varphi$ in the following way. We factor $\varphi(z)$ as $\Phi(z)\Phi(-z)$, where $\Phi(z)= \prod_{n=1}^\infty(1+(-1)^n\frac{z}{\tau_n})$ and show that $\Phi$ satisfies a certain  second order linear differential equation along with a functional equation either of which characterizes $\Phi$. We use these facts to establish an odd power series expansion of $n+\frac12-\tau_n$ in terms of $(n+\frac12)^{-1}$ and a power series expansion of the Fourier transform of $\varphi$, as suggested by the numerical work of H\"{o}rmander and Bernhardsson. The dual characterization of $\Phi$ arises from a commutation relation that holds more generally for a two-parameter family of differential operators, a fact that is used to perform high precision numerical computations. 
\end{abstract}
\subjclass[2020]{30D15, 33E30, 34A30, 41A44, 42A05.}
\maketitle
\tableofcontents

\section{Introduction}
The H\"{o}rmander--Bernhardsson extremal function is the unique solution $\varphi$ to the extremal problem
\begin{equation}\label{eq:extremalproblem}
	\frac{1}{\Cone} = \inf_{f \in PW^1} \left\{\|f\|_{L^1(\mathbb R)} \,: \, f(0) = 1\right\},
\end{equation}
where
$\Cone$  is the smallest positive constant $C$ such that the inequality 
\begin{equation}\label{eq:pointeval}
	|f(0)| \leq C \|f\|_{L^1(\mathbb R)}
\end{equation}
holds for every $f$ in the Paley--Wiener space $PW^1$, i.e., the subspace of $L^1(\mathbb{R})$ consisting of entire functions of exponential type at most $\pi$. The problem of computing
$\Cone$ and identifying $\varphi$ has attracted the attention of workers in operator theory \cite{BDK89}, partial differential equations~\cite{HB93}, approximation theory \cite{AKP96, Gorbachev05}, orthogonal polynomials \cite{LL15, Lubinsky17}, and number theory \cite{CMS19}, and it appears to be of some basic interest in Fourier analysis.

In  \cite{HB93}, H\"{o}rmander and Bernhardsson obtained the impressive numerical approximation
\begin{equation}\label{eq:hb}
	0.5409288219 \leq \Cone \leq 0.5409288220,
\end{equation}
but added that, unfortunately, they had not been able to identify the extremal function using
the numerical data, as originally hoped for. Building on our preliminary work performed in \cite{BORS}, we will in the present paper remedy this situation and give a comprehensive description of $\varphi$ which, as we will see,  stands out as a rather remarkable special function.

We start from the fact that
\[ \varphi(z) = \prod_{n=1}^\infty \left(1-\frac{z^2}{\tau_n^2}\right),\]
with $\tau\coloneqq (\tau_n)_{n\geq1}$ a strictly increasing sequence of positive numbers
(see \cite{HB93, LL15, BCOS}) and $n+\frac12-\tau_n$ belonging to $\ell^2$ (see
\cite{BORS}). The main characters in what follows will be the constant~$\Cone$, which can
be expressed as
\begin{equation} \label{eq:expressC}  \Cone = \frac{1}{2}\prod_{n=1}^\infty \frac{\tau_n^2}{n(n+1)} \quad \text{or} \quad \frac{1}{\Cone}=2+4\sum_{n=1}^\infty (-1)^n \Big(n+\frac12-\tau_n\Big), \end{equation}
the constant
\[ L_{\tau}(1)\coloneqq \sum_{n=1}^\infty \frac{(-1)^n}{\tau_n} , \]
and the entire function
\[  \Phi(z)\coloneqq \prod_{n=1}^\infty\left(1+(-1)^n\frac{z}{\tau_n}\right). \]
Note that $\varphi$ factors as $\varphi(z)=\Phi(z)\Phi(-z)$ and also that $\Phi'(0)=L_\tau(1)$.

Our main result can now be stated as follows.

\begin{theorem}\label{thm:main}
The function $\Phi$ is a solution to the differential equation
\begin{equation} \label{eq:diffbasic} z^2f''(z) + \Big(2z-\frac{1}{2\Cone}\Big)f'(z) + \Big(\frac{\pi^2}{4}z^2+\frac{L_{\tau}(1)}{2\Cone}\Big)f(z) = 0 \end{equation}
and the functional equation
\begin{equation} \label{eq:functional} F(z) e^{\frac{1}{4\Cone z}}=
\frac{e^{-i\frac{\pi}{4}+i\frac{\pi}{2}z} F\big(\frac{1}{2\pi i \Cone
z}\big)+e^{i\frac{\pi}{4}-i\frac{\pi}{2}z} F\big(-\frac{1}{2\pi i \Cone
z}\big)}{2\sqrt{\pi \Cone}z} ,\qquad z\in \C\smallsetminus \{0\} . \end{equation}
Conversely, an entire function solving either \eqref{eq:diffbasic} or \eqref{eq:functional} is a complex scalar times $\Phi$.
\end{theorem}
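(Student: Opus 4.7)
The plan is to base both characterizations on the Euler--Lagrange condition for the extremal problem, namely
\[ \int_{\R} f(x)\,\sgn(\varphi(x))\,dx = \frac{f(0)}{\Cone},\qquad f\in PW^1, \]
which asserts on the Fourier side that $\widehat{\sgn(\varphi)}(\xi) = 1/\Cone$ on $(-1/2,1/2)$ modulo distributions supported outside this interval. First I would translate this condition into explicit summation identities for the alternating sums $\sum_n (-1)^n\tau_n^{-k}$ and their positive-power companions, by testing it against well-chosen elements of $PW^1$ such as $e^{\pm i\pi x/2}$ and $\sinc(\pi x)$. These identities are the raw material that makes both the differential equation and the functional equation accessible from the infinite product representation of $\Phi$.

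To establish \eqref{eq:diffbasic} I would substitute the product formula for $\Phi$ into the left-hand side, divide through by $\Phi$, and verify that the resulting meromorphic function vanishes identically. The logarithmic derivative $\Phi'/\Phi$ is an alternating sum over $\tau_n$ convergent once adjacent terms are paired, which is licit because $n+\tfrac12-\tau_n\in\ell^2$. Matching the constant term at $z=0$ uses $\Phi'(0)=L_\tau(1)$; matching the next two Taylor coefficients reduces to the summation identities of Step~1, with the $\pi^2/4$ coming from the exponential type $\pi$. A potentially cleaner route is to first derive a fourth-order ODE satisfied by $\varphi = \Phi(z)\Phi(-z)$ directly from the Euler--Lagrange condition and then halve the order by splitting into even and odd parts.

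For \eqref{eq:functional} I would follow the hint in the abstract and build a commutation relation. Writing
\[ L = z^2\partial_z^2 + \Big(2z-\tfrac{1}{2\Cone}\Big)\partial_z + \Big(\tfrac{\pi^2}{4}z^2 + \tfrac{L_\tau(1)}{2\Cone}\Big), \]
I would verify by direct computation that the map $\sigma$ sending $F(z)$ to the right-hand side of \eqref{eq:functional} divided by $e^{1/(4\Cone z)}$ intertwines $L$ with itself. The gauge factor $e^{1/(4\Cone z)}$ arises naturally from conjugating the irregular singularity of $L$ at $z=0$ by the involution $z\mapsto 1/(2\pi i\Cone z)$. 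Granting this, $\sigma(\Phi)$ is another entire solution of $Lf=0$, and invoking the uniqueness established in the last step, $\sigma(\Phi)=\lambda\Phi$. Matching asymptotics as $z\to+\infty$ along the real axis fixes $\lambda=1$ and in particular pins down the constant $\sqrt{\pi\Cone}$ and the phases $e^{\pm i\pi/4}$.

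The converse statements rest on a Frobenius-type analysis at the irregular singular points of $L$. At $z=0$, one of the two formal solutions contains an essentially singular factor $e^{1/(2\Cone z)}$, so the space of entire solutions of $Lf=0$ is one-dimensional; this proves the converse for \eqref{eq:diffbasic}. For \eqref{eq:functional}, differentiating both sides twice and eliminating the terms $F(\pm 1/(2\pi i\Cone z))$ among the three resulting identities should reproduce \eqref{eq:diffbasic} for $F$, reducing the second converse to the first. The principal obstacle I foresee is the commutation step for \eqref{eq:functional}: pinning down the exact phase factors and normalization constants requires careful asymptotic tracking at both irregular singularities of $L$, which is where the delicate bridge between the ODE and the Fourier-analytic properties of $\Phi$ has to be managed correctly.
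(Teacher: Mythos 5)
Your overall plan matches the paper's in spirit for two of the four claims, but there are two genuine gaps, one of which is serious.

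\textbf{Forward direction of the ODE.} Your plan is to substitute the Hadamard product for $\Phi$ into $\mathcal{L}\Phi$, divide by $\Phi$, and ``verify that the resulting meromorphic function vanishes identically'' by matching Taylor coefficients at $z=0$. Matching finitely many coefficients does not do this, and an inductive verification of all coefficients would require a closed-form expression for the numbers $L_\tau(2m-1)=\sum (-1)^n\tau_n^{1-2m}$, which you do not have a priori. The paper's actual proof does not work coefficient-by-coefficient. It first recasts the $L^1$-orthogonality condition as a \emph{quadratic} first-order relation for $A(z)=e^{1/(4\Cone z)}\Phi(z)$, namely $A'(z)A(-z)+A'(-z)A(z)=-\tfrac{1}{2\Cone z^2}$; differentiating this and evaluating at $(-1)^{n+1}\tau_n$ shows that $\mathcal L\Phi$ vanishes at every zero of $\Phi$ and at $0$. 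Then a growth lemma ($\mathcal L\Phi\in PW^\infty_{\pi/2}$) plus Liouville forces $\mathcal L\Phi=Cz\Phi$, and $C=0$ is extracted from the quadratic relation again. The quadratic differential equation and the Liouville step are the crux, and they are absent from your outline; ``the summation identities of Step~1'' in your proposal correspond loosely to this machinery, but without making the first-order relation explicit your argument does not close.

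\textbf{Converse for the functional equation.} You propose to differentiate the functional equation and eliminate the values $F(\pm\tfrac{1}{2\pi i\Cone z})$ to recover the ODE, thereby reducing this converse to the ODE converse. This does not work. If you play the substitutions $z\mapsto -z$ and $z\mapsto \tfrac{1}{2\pi i\Cone z}$ against each other in the undifferentiated equation, the resulting system is degenerate: the equation at $w=\tfrac{1}{2\pi i\Cone z}$ is an algebraic consequence of the equations at $z$ and $-z$ (the $2\times 2$ system for $F(w),F(-w)$ has nonzero determinant $\kappa_+^2-\kappa_-^2$, and substituting back turns the third equation into a tautology). Introducing derivatives only makes things worse, since each differentiation adds two new unknowns $F^{(k)}(\pm w)$ but only one new equation, so elimination never closes. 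Tellingly, the authors write that ``it would be desirable to find an alternate proof'' of exactly this kind and that they could not; they instead establish the converse with an entirely different \emph{fixed-point} argument: assuming a second solution $F$, they perturb $\Phi\mapsto\Phi+\varepsilon F$, track how the zeros move, show the resulting sequence is a fixed point of an explicit map $K$ on $\ell^1$, and derive a contradiction by bounding $\|\nabla K\|_{\ell^1}<1$. This argument is not something your outline approaches.

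\textbf{Remaining parts.} Your commutation-relation strategy for the forward direction of \eqref{eq:functional} is essentially the paper's (Lemma~4.1 in \S~4), but be careful about the claim that ``$\sigma(\Phi)$ is another entire solution'': applying $\sigma$ to an entire function gives a priori a function holomorphic only on $\C^*$, and showing that it is in fact entire is essentially equivalent to the functional equation itself. The paper avoids this circularity by instead noting that $f$, $U_+f$, $U_-f$ all solve a second-order ODE on $\C^*$ and must therefore be linearly dependent, which yields the functional equation with undetermined $\kappa_\pm$; the specific values of $\kappa_\pm$ are then fixed using the reality of $\Phi$ on $\R$ and the asymptotics of its zeros $\tau_{2n-1}=2n-\tfrac12+o(1)$, not an asymptotic comparison along $\R_+$. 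Finally, your Frobenius-style argument for the converse of \eqref{eq:diffbasic} is a legitimate alternative to the paper's route (the paper instead reads off from the Taylor recursion that $a(n+1)\alpha_{n+1}$ is determined by $\alpha_n,\alpha_{n-2}$, so the space of solutions analytic at $0$ is one-dimensional), and either works.
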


This dual characterization of the function $\Phi$ is rather unexpected. Compatibility 
of~\eqref{eq:diffbasic} and~\eqref{eq:functional} owes itself to the fact that the differential operator from~\eqref{eq:diffbasic} commutes with an action of the Klein four-group on holomorphic functions on $\C^{*}:=\C\smallsetminus\{0\}$. More precisely, the differential operator in question commutes with the two involutions sending $f(z)$ to $e^{-\frac{1}{4\Cone z}\pm i\frac{\pi}{2}z}f(\pm\frac{1}{2\pi i\Cone z})$. These involutions define a $\Z/2\Z\times \Z/2\Z$-action on holomorphic functions on $\C^{*}$, and, because of the commutation relation, the equations~\eqref{eq:diffbasic} and~\eqref{eq:functional} end up sharing a nonzero solution. 

It is worth pointing out a curious similarity with another instance of time-frequency localization, namely the classical Landau--Pollak--Slepian theory of time-and-band limiting  and the associated prolate spheroidal wave functions (see \cite{SP61, LP61, LP62, Slepian83}). Of central importance in that subject is the surprising existence of a differential operator---the prolate spheroidal operator---that commutes with the integral operator of time-and-band limiting. We refer to Gr\"{u}nbaum's commentary \cite{Grunbaum} on Connes and Moscovici's remarkable evolution of this theory \cite{Connes} for an overview of the substantial research efforts made to understand and extend the Landau--Pollak--Slepian theory.  As in that story, the commutation relation at hand, though easily verified, appears at first glance rather coincidental. While it is crucial to our analysis, we lack for the moment a conceptual explanation for why the commutation occurs in our case.

The differential and functional equations complement each other by giving different kinds of information about 
$\Phi$ and its zeros. The differential equation~\eqref{eq:diffbasic} shows for example that 
all coefficients of the Taylor expansion of $\varphi$ are polynomials with rational 
coefficients in $\pi^2$, $\Cone$, and $L_{\tau}(1)$ (see formula \eqref{eq:phialg} below). 
On the other hand, we use~\eqref{eq:functional} to obtain a series 
expansion for the zeros $\tau_n$ and to establish strong regularity of the 
Fourier transform $\widehat{\phi}$. The former result reads as follows.
\begin{theorem}\label{thm:zeros}
The numbers $\tau_n$ can be expanded as
	\begin{equation} \label{eq:taunformula}
	\tau_n = n + 1/2 - \rho\left(\frac{1}{n+1/2}\right),
	\end{equation}
where $\rho(z) = \sum_{m\ge 1} a_m z^m$ is an odd power series whose domain of convergence is $|z|\le 2$, with $a_m\ge 0$ for all $m\ge 1$ and $\rho(2)=\frac12$.
 \end{theorem}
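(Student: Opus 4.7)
The plan is to substitute the zeros $\alpha_n=(-1)^{n+1}\tau_n$ of $\Phi$ into \eqref{eq:functional}: the left-hand side vanishes, and the resulting relation involves only the values of $\Phi$ at two conjugate purely imaginary points. Because $\Phi$ has real Taylor coefficients and only real zeros, $\theta(v):=\arg\Phi(iv)$ is a well-defined real-analytic odd function of $v\in\R$, and $\Phi(iv)/\Phi(-iv)=e^{2i\theta(v)}$. A short bookkeeping of phases, uniform in the parity of $n$, reduces the relation to
\[
	\pi\,\epsilon_n=-2\,\theta(v_n)+2\pi k_n,\qquad \epsilon_n:=n+\tfrac{1}{2}-\tau_n,\ v_n:=\frac{1}{2\pi\Cone\tau_n},
\]
for some $k_n\in\Z$. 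Using the bound $|\theta|<\pi/4$ on $[0,\infty)$ (obtained from $\theta(0)=0$, $\theta(\infty)=-\pi/4$, and monotonicity), together with the interlacing $\tau_n\in(n,n+1)$, which yields $|\epsilon_n|<1/2$, forces $k_n=0$ for every $n\ge1$.

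\textbf{Parametrization and analytic inversion.} Introduce $g(u):=-\tfrac{2}{\pi}\theta(u/(2\pi\Cone))$, an odd real-analytic function with $g(0)=0$, $g'(0)=-L_\tau(1)/(\pi^2\Cone)>0$, and $g(u)\to\tfrac12$ as $u\to\infty$. Setting $u_n:=1/\tau_n$, the equation becomes
\[
	\frac{1}{n+\tfrac12}=\frac{u_n}{1+u_n\,g(u_n)},\qquad \epsilon_n=g(u_n),
\]
which suggests the parametrization $(t,\rho)=\bigl(u/(1+u\,g(u)),\,g(u)\bigr)$ for $u\in[0,\infty)$. Since $t'(0)=1$, the implicit function theorem yields an odd analytic inverse $u(t)$ near $t=0$; monotonicity of $u\mapsto t(u)$ on all of $[0,\infty)$, which amounts to the negativity of $\Omega'(u):=-1/u^2+g'(u)$ and can be checked from the explicit series for $g'$, extends $u(t)$ to an analytic bijection $[0,2)\to[0,\infty)$, and $\rho(t):=g(u(t))$ satisfies $\tau_n=n+\tfrac12-\rho(1/(n+\tfrac12))$ by construction.

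\textbf{Boundary value.} The key analytic input is $g(u)\to\tfrac12$ as $u\to\infty$. To prove it, insert $z=iv$ into \eqref{eq:functional} and let $v\to\infty$: the dominant contribution comes from the growing exponential $e^{i\pi/4+\pi v/2}\Phi(1/(2\pi\Cone v))$, yielding the asymptotic
\[
	\Phi(iv)\sim \frac{e^{-i\pi/4}\,e^{\pi v/2}}{2v\sqrt{\pi\Cone}},
\]
whence $\theta(v)\to-\pi/4$ and thus $g(u)\to\tfrac12$. Combining this with the parametrization gives $t(u)\to 2$ and $\rho(u)\to\tfrac12$, so $\rho$ extends real-analytically to $(-2,2)$ with boundary limit $\rho(2^-)=\tfrac12$; a finer analysis of the same asymptotics gives $u(t)\sim A/(2-t)$ near $t=2$ for a positive constant $A$, which shows that $\rho$ is $C^\infty$-flat but non-analytic at $t=2$, fixing the radius of convergence at exactly $2$.

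\textbf{Nonnegativity and conclusion.} The main obstacle is to show that $a_m\ge 0$, equivalently that $\rho$ is absolutely monotonic on $[0,2)$. The natural strategy is to convert the product expansion
\[
	g(u)=\frac{2}{\pi}\sum_{n\ge1}(-1)^{n+1}\arctan\!\left(\frac{u}{2\pi\Cone\tau_n}\right)
\]
into a positive integral representation for $\rho(t)$ of Stieltjes type, from which nonnegativity of the Taylor coefficients is automatic; alternatively one can apply the Lagrange inversion formula $a_m=\tfrac{1}{m}[u^{m-1}]g'(u)(1+u\,g(u))^m$ in conjunction with a cancellation identity drawn from \eqref{eq:diffbasic} that compensates the alternating signs of $g$. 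I expect this last step to be the crux of the proof. Once $a_m\ge0$ is established, Abel's theorem on nonnegative power series, combined with $\rho(2^-)=\tfrac12$ from the previous paragraph, yields $\sum_{m\ge1}a_m\,2^m=\tfrac12$, so the series converges absolutely on the closed disk $|z|\le2$ and $\rho(2)=\tfrac12$, completing the theorem.
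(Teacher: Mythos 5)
Your outline follows the same overall structure as the paper's proof: extract from the functional equation~\eqref{eq:functional} an implicit relation expressing $n+\tfrac12-\tau_n$ through $\arg\Phi$ at a purely imaginary point, parametrize the implicit curve, invert analytically to get $\rho$, determine the boundary behavior as $u\to\infty$, and conclude via Lagrange inversion. The reduction $k_n=0$, the asymptotic $\theta(v)\to-\pi/4$, and the parametrization $t(u)=u/(1+ug(u))$ are all sound and in line with what the paper does (in slightly different variables). So far, so good.

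However, there is a genuine gap at exactly the point you flag yourself. You write that you ``expect this last step to be the crux of the proof'' and offer two speculative directions (a Stieltjes-type positive integral representation, or a cancellation identity from~\eqref{eq:diffbasic}), but you do not carry either out. The paper's proof of $a_m\ge 0$ is quite different from both suggestions and uses only the functional equation: after writing $a_m$ as a contour integral via Lagrange inversion, it constructs a specific curve $\gamma^+=\{\Xi(y)+iy:y\ge0\}$ in the first quadrant with $\tfrac12<\Xi(y)<1$, defined by the condition $\arg w(z)=\pi$ for the auxiliary function $w$ of~\eqref{eq:wdef}, on which $g$ is real and takes values in $[-\tfrac12,0)$. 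Nonnegativity of $a_m$ then reduces to the differential inequality $\tfrac{\Xi'(y)}{\Xi(y)}\bigl(y^2-\Xi^2(y)\bigr)\le 2y$, which is established by implicit differentiation of the defining relation of $\gamma^+$ together with explicit bounds. None of this is anticipated in your sketch, and it is the technical heart of the theorem.

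A secondary, consequential issue: your argument that the radius of convergence is exactly $2$ rests on showing that $\rho$ extends real-analytically to $(-2,2)$ and is non-analytic at $t=2$. Real-analyticity on an interval does not by itself bound the radius of convergence of the Taylor series at $0$ from below; one needs either nonnegativity of the coefficients (so that Pringsheim places the singularity on the positive axis) or a genuinely complex-analytic estimate. Since nonnegativity is exactly the part left open, your radius bound is circular as stated. The paper avoids this by proving $R\ge 2$ independently of positivity: it produces a closed contour $\gamma_\varepsilon$ around $0$ on which $|g(z)|\le\tfrac12+\varepsilon$, giving the lower bound directly from the contour-integral representation of $\rho$. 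You would need either to adopt that argument or to prove $a_m\ge0$ first and then invoke Pringsheim.
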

The curious point that $\rho(2)=\frac12$ suggests the existence of an ``invisible'' zero at $0$,  which is consistent with the appearance of the point $0$ in the orthogonality relations used in \cite[Lem. 3.3 and Sec. 8.2]{BCOS}. The coefficients $a_n$ are also rational functions of $\pi^2$, $\Cone$, and $L_{\tau}(1)$ (see formulas \eqref{eq:a1q} and \eqref{eq:allaq} below), as can be seen to follow from the differential equation.

To state our result concerning the regularity of $\widehat{\phi}$, we begin by defining the Fourier transform of an $L^1$ function $f$ as
\[ \widehat{f}(\xi)\coloneqq \int_{-\infty}^\infty f(x) e^{-i\xi x} dx. \]
With this convention, functions in $PW^1$ have Fourier transforms supported on $(-\pi,\pi)$.
To place our result in context, we give a brief description of H\"{o}rmander and Bernhardsson's numerical work. The core of their approach
was to use the multivariate version of Newton's method to solve the original extremal problem in the 4-dimensional subspace of $PW^1$ consisting of~$f$ such that on $[-1,1]$, $\widehat{f}(\pi \xi)$ is a polynomial in $1-\xi^2$ of degree at most $4$. The Fourier transform of the corresponding extremal function $f_{\text{HB}}$ was thus of the form
\[ \widehat{f}_{\text{HB}}(\pi \xi) = c_1(1-\xi^2)+c_2(1-\xi^2)^2+c_3(1-\xi^2)^3+c_4(1-\xi^2)^4 , \quad \xi \in [-1,1]. \]
H\"{o}rmander and Bernhardsson went even further and computed similar approximations for polynomials in $1-\xi^2$ of degree at most $6$, but noted that the first 14 decimals of the approximation of $\Cone$ would not change.  In spite of the success of their approach, they could not conclude that $f_{\text{HB}}$ is a good approximation to $\varphi$. Indeed, H\"{o}rmander and Bernhardsson remarked that they had no proof that $\xi\mapsto \widehat{\varphi}(\pi\xi)$
is infinitely differentiable in $[- 1, 1 ]$, although the fast convergence of the
numerical approximations even suggested analyticity  \cite[p. 92]{HB93}. The following result confirms what H\"{o}rmander and Bernhardsson were hinting at.
\begin{theorem} \label{thm:phifourier}
We have
$\widehat{\phi}(\pi \xi) = h (\xi)$ for $\xi$ in $[-1,1]$, where $h$ is the entire function of order $\frac12$ defined by the power series
	\begin{equation} \label{eq:phifourier}
	h(z)\coloneqq \pi\sum_{n=1}^\infty  \frac{(\Phi^2)^{(n-1)}(0)}{(n-1)! n!} \Big( \frac{1-z}{2\Cone}\Big)^n .
	\end{equation}
\end{theorem}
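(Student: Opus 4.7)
The plan is to derive from the functional equation~\eqref{eq:functional} a closed-form representation of $\varphi$ that makes the behaviour of $\widehat\varphi$ near $\pm\pi$ transparent. Applying \eqref{eq:functional} at $z$ and at $-z$ and multiplying, the four cross-terms in the product $\Phi(z)\Phi(-z)$ collapse to two because $e^{i\pi/2}+e^{-i\pi/2}=0$, giving the exact identity
\[ \varphi(z)\;=\;-\frac{1}{4\pi\Cone z^{2}}\Bigl[\Phi^{2}(u)\,e^{i\pi z}+\Phi^{2}(-u)\,e^{-i\pi z}\Bigr],\qquad u=\frac{1}{2\pi i \Cone z},\quad z\in\C^{*}. \]
The key feature is that the only exponentials appearing are $e^{\pm i\pi z}$, carrying the boundary frequencies of the support of $\widehat\varphi$.

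I would then fix $\xi\in(-1,1)$ and split $\varphi(z)e^{-i\pi\xi z}=T_{+}(z)+T_{-}(z)$ with
\[ T_{\pm}(z)\;:=\;-\frac{\Phi^{2}(\pm u)}{4\pi\Cone z^{2}}\,e^{\pm i\pi(1\mp\xi)z}. \]
Each $T_{\pm}$ is holomorphic on $\C^{*}$ with an essential singularity at $z=0$, and because $1\mp\xi>0$ while $\Phi^{2}(\pm u)\to 1$ uniformly on horizontal lines with $|\operatorname{Im} z|\to\infty$ (since $|u|=O(1/|z|)$), $T_{\pm}$ decays exponentially as $\operatorname{Im} z\to\pm\infty$. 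Their sum is entire, so $\widehat\varphi(\pi\xi)$ is unchanged under contour deformation. Taking the real line indented with a small semicircle \emph{below} $0$, call it $\gamma_{-}$, pushing the contour down to $-i\infty$ gives $\int_{\gamma_{-}}T_{-}\,dz=0$, while deforming $\gamma_{-}$ up across $0$ (picking up the residue) and then on to $+i\infty$ (picking up nothing) yields
\[ \widehat\varphi(\pi\xi)\;=\;2\pi i\operatorname{Res}_{z=0}T_{+}(z). \]

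The residue is read off the Laurent expansion at $z=0$. Since $\Phi^{2}$ is entire, the substitution $u=1/(2\pi i\Cone z)$ produces a Laurent series
\[ \Phi^{2}(u)\;=\;\sum_{k\ge 0}\frac{(\Phi^{2})^{(k)}(0)}{k!\,(2\pi i\Cone)^{k}}\,z^{-k} \]
convergent on all of $\C^{*}$. Multiplying by $z^{-2}e^{i\pi(1-\xi)z}$ and collecting the coefficient of $z^{-1}$ forces $j=k+1$ in the resulting double sum; a short simplification using $(-i)^{k}i^{k+1}=i$ then gives
\[ \widehat\varphi(\pi\xi)\;=\;\pi\sum_{n=1}^{\infty}\frac{(\Phi^{2})^{(n-1)}(0)}{(n-1)!\,n!}\Bigl(\frac{1-\xi}{2\Cone}\Bigr)^{n}\;=\;h(\xi), \]
precisely the series in~\eqref{eq:phifourier}. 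Both sides are continuous on $\R$, so the identity extends from $(-1,1)$ to $[-1,1]$. For the order of $h$, a standard coefficient estimate suffices: $\Phi$ has exponential type $\pi/2$, hence $\Phi^{2}$ has type $\pi$ and $|(\Phi^{2})^{(n-1)}(0)/(n-1)!|\lesssim \pi^{n-1}/(n-1)!$; plugging this in and applying Stirling's formula shows $|h(z)|\lesssim\exp(C\sqrt{|z|})$, giving order $\tfrac12$. The main delicate step of the whole argument is the justification of contour deformation across the essential singularity at $z=0$, but this reduces to the elementary fact that $\Phi^{2}(u(z))$ remains bounded on horizontal lines far from the real axis while $e^{\pm i\pi(1\mp\xi)z}$ supplies the required exponential decay in the appropriate half-plane.
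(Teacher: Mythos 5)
Your proof is correct and is essentially the paper's argument. You derive the same key identity $-4\pi\Cone z^2\phi(z)=e^{\pi i z}\Phi^2(\tfrac{1}{2\pi i\Cone z})+e^{-\pi i z}\Phi^2(-\tfrac{1}{2\pi i\Cone z})$ by squaring the functional equation; the only stylistic difference is that the paper packages the contour-deformation and residue step as a standalone lemma (stated for any identity of the form $zf(z)=e^{ibz}g(1/z)+e^{-ibz}h(1/z)$, and reused to compute $\kappa_\pm$), whereas you carry out that same contour argument inline for the specific case at hand.
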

Since $h$ is an even function, it has an alternate power series representation of the form
\[ h(z)=\sum_{n=1}^\infty c_n (1-z^2)^n .\]
In retrospect, we see that H\"{o}rmander and Bernhardsson's work yields approximations to the first four coefficients  in this expansion. We may now verify that the decay of the coefficients~$c_n$ is indeed very fast, as it  can be quantified by using that  $w\mapsto \sum_{n=1}^\infty c_n w^n$ is an entire function of order $\frac14$ and finite type.

\subsection*{Outline of the paper} In the next section, we deduce from the familiar description of $\varphi$ in terms of $L^1$ orthogonality several alternate characterizations, one of which reveals that 
$A(z)\coloneqq e^{\frac{1}{4\Cone z}} \Phi(z)$ satisfies the quadratic differential equation
\[ A'(z)A(-z)+A'(-z)A(z)=-\frac{1}{2\Cone z^2}. \]
This relation opens up the path to the linear differential equation \eqref{eq:diffbasic} which is presented in \S~\ref{sec:ode}. The proof of the part of Theorem~\ref{thm:main} pertaining to that equation is then completed in this section.

The salient features of the differential equation \eqref{eq:diffbasic} are  best understood by taking the point of view that $-\frac{L_\tau(1)}{2\Cone}$ is an eigenvalue and $\Phi$ an associated eigenfunction of the second order linear operator $\mathcal{L}_{a,b}$ which acts on $f$ by the rule
\[ \mathcal{L}_{a,b}(f)(z) \coloneqq z^2f''(z) + (2z-a)f'(z) + b^2z^2f(z)\, ,\] 
with $a=\frac{1}{2\Cone}$ and $b=\frac{\pi}{2}$. In \S~\ref{sec:family}, we take a closer look at this family of operators when $ab\neq 0$ and establish the important fact that $\mathcal{L}_{a,b}$ commutes with the two operators $U_{\pm}$ defined as follows:
		\[U_{\pm}f(z) = z^{-1}e^{\mp i bz -\frac{a}{2z}}f\Big(\pm
\frac{ia}{2bz}\Big).\]
We identify the spectrum of $\mathcal{L}_{a,b}$, in part as preparation for high precision numerical computation of $\Cone$ and $L_\tau(1)$.

In \S~\ref{sec:functionaleq}, we turn to the second part of the proof of Theorem~\ref{thm:main}. The fact that $\Phi$ solves \eqref{eq:functional} is an easy consequence of the work done in \S~\ref{sec:family}. We then apply a fixed point argument to deal with the remaining difficulty of showing that all solutions are multiples of $\Phi$. The next two sections \S~\ref{sec:taun} and \S~\ref{sec:phihat} use the functional equation to establish Theorem~\ref{thm:zeros} and Theorem~\ref{thm:phifourier}, respectively.

In \S~\ref{sec:summation}, we return to one of the consequences of the description of $\varphi$ in terms of $L^1$ orthogonality, namely the summation formula
\[
-\frac{f'(0)}{2\Cone} = \sum_{n= 1}^\infty (-1)^n(f(\tau_n)-f(-\tau_n)),
\]
valid for all $f$ in $PW^1$. We show that we get a similar formula associated with the zeros of each of the eigenfunctions of $\mathcal{L}_{a,\frac{\pi}2}$, and so we obtain an abundance of such formulas. 

We describe in \S~\ref{sec:numerics} an algorithm for computing $\Cone$ and $L_{\tau}(1)$ to any desired precision. To 100-digit precision, we find that 
\begin{equation} \label{eq:precise100}
\begin{aligned}
	\Cone =\;
	0.&5409288219018305893928820589996903868550226554937\\
	&596979480680071379607164927092133820213498385139732\ldots\\
	L_{\tau}(1) =\;
	{-}0.&4519521648844099974932868451365782916108606506737\\
	&789537658475272354499485397456507823153397452618492\ldots,
	\end{aligned}
\end{equation}
and we see that $\Cone$ is in agreement with the lower bound in \eqref{eq:hb}. 

In the final section, we deduce the two formulas in \eqref{eq:expressC}, before presenting some curious problems that may await future investigation. The first of these arises from a consideration of the two Dirichlet series
\[L_{+}(s) \coloneqq \sum_{n=1}^{\infty}\frac{1}{\tau_n^s}\qquad\mbox{and}\qquad L_{-}(s) \coloneqq  \sum_{n=1}^{\infty}\frac{(-1)^n}{\tau_n^s},\]
the latter of which extends to an entire function and the former to a meromorphic function in $\mathbb C$. We have verified numerically to very high precision that $L_{+}(-2k)=(2\pi i \Cone)^{-2k} L_+(2k)$ for all integers $k$, and the challenge is to prove this relation.  The second problem is to verify an integrality phenomenon, namely that the Taylor coefficients of $\varphi$ appear to lie in $\Z[\pi, L_{\tau}(1), \Cone]$. These problems suggest that there may be additional  structures related to the H\"ormander--Bernhardsson function yet to be disclosed.

\section{Characterizations of the sequence \texorpdfstring{$\tau$}{tau}}
\label{sec:quaddiffeq}

We will say that a strictly increasing sequence $t=(t_n)_{n=1}^\infty$ is an admissible sequence of positive numbers (or simply an admissible sequence) if all the $t_n$ are positive and $(n+\frac12-t_n)$ is in $\ell^2$.
To any admissible sequence $t=(t_n)_{n=1}^\infty$, we associate the two functions
\begin{equation} \label{eq:theta} \psi_t(z)\coloneqq \prod_{n=1}^\infty \left(1-\frac{z^2}{t_n^2}\right) \quad \text{and} \quad \Theta_t(z)\coloneqq -\frac{1}{4\Cone z}+\frac{z}{2}\sum_{n=1}^\infty (-1)^n\Big(\frac{1}{t_n-z}+\frac{1}{t_n+z}\Big). \end{equation}
We will now deduce the following characterization of the sequence $\tau$ and hence of the extremal function $\varphi=\psi_\tau$.
\begin{theorem}\label{thm:charphi}  Let $t$ be an admissible sequence. Then $t=\tau$ if and only if
\begin{equation} \label{eq:charphi}
	 \psi_t(z) \Theta_t(z)=-\frac{1}{4\Cone z}.
\end{equation}
\end{theorem}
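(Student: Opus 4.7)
My approach exploits the classical $L^1$-duality characterization of $\varphi$. As the unique $L^1$-minimizer in $PW^1$ with $f(0)=1$, the function $\varphi=\psi_\tau$ is characterized by the interpolation identities
\[
\psi_\tau'(\tau_k) \;=\; \frac{(-1)^k}{2\Cone\,\tau_k^2}, \qquad k\ge 1,
\]
which one derives by pairing the sign function $\sgn(\varphi)$ with the $PW^1$ functions $\varphi(z)/(z-\tau_k)$. I will use this reformulation throughout.

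\textbf{Forward direction.} Assuming $t=\tau$, set $F(z):=\psi_\tau(z)\Theta_\tau(z)+\frac{1}{4\Cone z}$. The potential singularities of $F$ are all removable: at $z=0$ the residue $-1/(4\Cone)$ of $\Theta_\tau$ (from \eqref{eq:theta}) cancels the added $1/(4\Cone z)$; at $z=\pm\tau_k$, $\psi_\tau$ has a simple zero while $\Theta_\tau$ has a simple pole, and by the interpolation identity the finite value of the product $\psi_\tau\Theta_\tau$ at $\pm\tau_k$ equals $\mp1/(4\Cone\tau_k)$, exactly cancelling $1/(4\Cone z)$ evaluated there. Thus $F$ is entire, and the same cancellation argument shows $F(z)=\psi_\tau(z)G(z)$ with
\[
G(z) \;:=\; \Theta_\tau(z)+\frac{1}{4\Cone\,z\,\psi_\tau(z)}
\]
also entire. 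Since $\psi_\tau$ has exponential type exactly $\pi$ and $F$ has type $\le\pi$, $G$ is of type $0$; on the real axis the decay $\psi_\tau(x)=O(1/x^2)$ combined with Abel summation on the alternating series defining $\Theta_\tau$ yields $|G(x)|=O(|x|)$; a Phragm\'en-Lindel\"of argument then forces $G$ to be a polynomial of degree at most one, and its oddness together with $G(0)=0$ reduces this to $G(z)=az$; finally, the imaginary-axis estimate $|G(iy)|=O(1/|y|)$, again by Abel summation, forces $a=0$, so $G\equiv 0$ and \eqref{eq:charphi} follows.

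\textbf{Converse direction.} Given admissible $t$ with $\psi_t\Theta_t=-\frac{1}{4\Cone z}$, taking the limit of both sides as $z\to t_k$ extracts the identity $\psi_t'(t_k)=(-1)^k/(2\Cone\,t_k^2)$ for every $k\ge 1$. Since admissibility ensures $\psi_t\in PW^1$ with $\psi_t(0)=1$, and since the interpolation identities characterize $\psi_t$ as the $L^1$-extremal, the uniqueness of $\varphi$ gives $\psi_t=\varphi$, whence $t=\tau$.

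\textbf{Main obstacle.} The most delicate step will be the growth analysis of $G$: establishing type $0$ and the two bounds $|G(x)|=O(|x|)$ on the real axis and $|G(iy)|=O(1/|y|)$ on the imaginary axis. The imaginary-axis estimate in particular requires careful Abel-summation treatment of the conditionally convergent series $\sum(-1)^n\tau_n/(\tau_n^2+y^2)$, while the real-axis bound uses the decay $\psi_\tau(x)=O(1/x^2)$ that itself follows from the continuity of $\widehat\varphi$ at $\pm\pi$. Together these bounds trap $G$ between being polynomial of degree at most one (from the real-axis side) and decaying at infinity on the imaginary axis, thereby forcing $G\equiv 0$.
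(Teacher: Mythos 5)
The forward direction is the weaker of the two claims and your approach there is defensible: the interpolation identities $\varphi'(\tau_k)=(-1)^k/(2\Cone\tau_k^2)$ do follow from the $L^1$-orthogonality (e.g.\ via the summation formula of \S8 applied to $f(z)=\varphi(z)z/(\tau_k^2-z^2)$), and they are exactly what is needed to see that $F=\psi_\tau\Theta_\tau+\tfrac{1}{4\Cone z}$ is entire and vanishes on $\{0\}\cup\{\pm\tau_n\}$. Two caveats, though: the specific pairing you describe ($\sgn\varphi$ against $\varphi(z)/(z-\tau_k)$) does not isolate $\varphi'(\tau_k)$ as directly as you suggest, and the inference ``$F$ has type $\le\pi$, $\psi_\tau$ has type $\pi$, hence $G=F/\psi_\tau$ has type $0$'' is false in general (consider $F=\psi_\tau e^{z}$). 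The type-$0$ claim and the two asymptotic bounds on $G$ are the real content here, and you correctly flag them as the delicate part; the paper circumvents this by proving, from \eqref{eq:charphi}, that $F\psi_t$ lies in $PW^1$, and then invoking Lemma~\ref{lem:adm} (the two-sided bound \eqref{eq:psit} plus the uniqueness of $PW^1$ functions vanishing on $t^\ast$), which is cleaner than an ad hoc Phragm\'en--Lindel\"of argument on $G$.

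The converse direction, however, has a genuine gap that is not repairable without essentially redoing the paper's argument. You extract from \eqref{eq:charphi} the pointwise identities $\psi_t'(t_k)=(-1)^k/(2\Cone t_k^2)$ and then assert that ``the interpolation identities characterize $\psi_t$ as the $L^1$-extremal.'' That assertion is exactly what needs to be proved and is not a standard fact: the $L^1$-extremality characterization (see \cite[Thm.~3.6]{BCOS} or \eqref{eq:basicid}) is the statement that a certain functional identity holds for \emph{all} $f\in PW^1$, and knowing only the countably many residue relations at the points $t_k$ is a priori far weaker. The paper's converse proceeds very differently: starting from \eqref{eq:charphi}, it shows that for an arbitrary even $f\in PW^1$ the quantity $F(z)\coloneqq f(z)\Theta_t(z)+\tfrac{f(0)}{4\Cone z}-\sum(-1)^n\tfrac{t_nz}{t_n^2-z^2}f(t_n)$ is entire and that $F\psi_t\in PW^1$; Lemma~\ref{lem:adm} then forces $F$ to be a constant, which growth estimates show must be $0$. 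This establishes the full reproducing formula \eqref{eq:frepr}, which via Lemmas~\ref{lem:frep} and~\ref{lem:summform} is equivalent to $t=\tau$. Passing through the ``all $f$'' statement \eqref{eq:frepr} (rather than only its specialization $f=\psi_t$, which is \eqref{eq:charphi}, or the interpolation identities) is precisely what lets one reconnect to the distributional characterization of $\tau$; your proposal skips this step entirely.
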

The main conclusion of this section, of crucial importance in the sequel, is that Theorem~\ref{thm:charphi} can be rephrased as saying that the function
	\[ A_t(z) \coloneqq \exp\Big(\frac{1}{4\Cone z}\Big)\prod_{n=1}^\infty \Big(1+(-1)^n\frac{z}{t_n}\Big) \]
	satisfies a first order quadratic differential equation when $t=\tau$. Indeed, observing that
	\[ \Theta_t(z)= \frac{z}{2}\left(\frac{A_t'(z)}{A_t(z)}+\frac{A_t'(-z)}{A_t(-z)}\right) \]
and using the fact that $\psi_t(z)=A_t(z)A_t(-z)$, we see that Theorem~\ref{thm:charphi} can be recast as the following assertion.
\begin{corollary}\label{thm:diff} Let $t$ be an admissible sequence. Then $t=\tau$ if and only if
	\begin{equation} \label{eq:Aeq}
	A_t'(z)A_t(-z)+A_t'(-z)A_t(z)=-\frac{1}{2\Cone z^2}.
	\end{equation}
\end{corollary}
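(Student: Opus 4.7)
My approach is that this corollary is a direct algebraic rephrasing of Theorem~\ref{thm:charphi}; the real content is to verify the two identities
\begin{equation*}
\Theta_t(z) = \frac{z}{2}\left(\frac{A_t'(z)}{A_t(z)}+\frac{A_t'(-z)}{A_t(-z)}\right) \qquad\text{and}\qquad \psi_t(z) = A_t(z)A_t(-z)
\end{equation*}
asserted in the paragraph preceding the statement, together with the convergence of the products and sums involved.

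The product identity is immediate from the definitions: the exponential factors $e^{\pm 1/(4\Cone z)}$ in $A_t(z)A_t(-z)$ cancel, and the pair of $n$-th factors multiplies to $(1+(-1)^n z/t_n)(1-(-1)^n z/t_n) = 1 - z^2/t_n^2$, matching the $n$-th factor of $\psi_t$. For the logarithmic derivative identity I would differentiate $\log A_t(z)$ termwise: the exponential factor contributes $-1/(4\Cone z^2)$ and the factor $1+(-1)^n z/t_n$ contributes $(-1)^n/(t_n + (-1)^n z)$. Substituting $-z$ yields the analogous expression for $A_t'(-z)/A_t(-z)$, with the same exponential contribution $-1/(4\Cone z^2)$; summing the two and pairing terms, the bracket $\frac{(-1)^n}{t_n+(-1)^n z} + \frac{(-1)^n}{t_n-(-1)^n z}$ collapses to $(-1)^n\bigl(\frac{1}{t_n-z} + \frac{1}{t_n+z}\bigr)$. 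Multiplying by $z/2$ then reproduces the definition of $\Theta_t(z)$.

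Combining the two identities gives $\psi_t(z)\Theta_t(z) = \frac{z}{2}\bigl(A_t'(z) A_t(-z) + A_t'(-z) A_t(z)\bigr)$, so \eqref{eq:charphi} becomes \eqref{eq:Aeq} after dividing by $z/2$. The main point needing care is that neither the product defining $A_t$ nor the individual series $\sum (-1)^n/(t_n + (-1)^n z)$ converges absolutely; however, pairing consecutive odd and even indices makes each block of order $O(1/k^2)$ thanks to the admissibility hypothesis $n+\tfrac12 - t_n \in \ell^2$, which legitimates all the termwise manipulations above. This is the only real obstacle in what is otherwise a direct calculation.
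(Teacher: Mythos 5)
Your proposal is correct and follows exactly the paper's route: the paper also presents Corollary~\ref{thm:diff} as an immediate algebraic rephrasing of Theorem~\ref{thm:charphi} via the two identities $\psi_t(z)=A_t(z)A_t(-z)$ and $\Theta_t(z)=\frac{z}{2}\bigl(\frac{A_t'(z)}{A_t(z)}+\frac{A_t'(-z)}{A_t(-z)}\bigr)$, which it states without further comment; you simply fill in the elementary verification (product cancellation, logarithmic differentiation, and the pairing needed for convergence of the conditionally convergent series).
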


The proof of Theorem~\ref{thm:charphi} will rely on two equivalent characterizations of the sequence~$\tau$, the first of which is tied more directly to our original extremal problem \eqref{eq:extremalproblem}.
\begin{lemma} \label{lem:summform}
Let $t$ be an admissible sequence. Then $t=\tau$ if and only if
\begin{equation}  \sum_{n=1}^\infty  (-1)^{n}\sin (t_n \xi) = - \frac{\xi}{4\Cone}  \label{eq:dercond}  \end{equation}
for $-\pi < \xi < \pi$ in the sense of tempered distributions.
\end{lemma}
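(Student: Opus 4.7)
The plan is to recognize \eqref{eq:dercond} as the Fourier-analytic incarnation of the classical dual certificate for the extremal problem \eqref{eq:extremalproblem}. Convex duality together with the uniqueness of the $L^1$-extremizer shows that, for an admissible sequence $t$ with $\psi_t \in PW^1$, the condition $t = \tau$ is equivalent to the orthogonality relation
\[
\int_{\R} f(x)\,\sgn(\psi_t(x))\,dx = \frac{f(0)}{\Cone} \qquad \text{for all } f \in PW^1.
\]
Indeed, in one direction $\Cone\,\sgn(\varphi)$ is the unique extremal $L^\infty$-functional of norm $\Cone$ that extends evaluation at $0$; in the other, plugging $f = \psi_t$ yields $\|\psi_t\|_1 = 1/\Cone$, forcing $\psi_t = \varphi$ by uniqueness. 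By Paley--Wiener (i.e., $\mathrm{supp}(\widehat f)\subset[-\pi,\pi]$), this orthogonality relation is in turn equivalent to the distributional identity $\widehat{\sgn\psi_t}(\xi) = 1/\Cone$ on $(-\pi,\pi)$.

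The step from the Fourier condition to \eqref{eq:dercond} then rests on the step-function structure of $\sgn\psi_t$. Since $\psi_t$ is positive near $0$ with simple real zeros at $\pm t_n$, its sign has distributional derivative
\[
\bigl(\sgn\psi_t\bigr)'(x) = 2\sum_{n=1}^\infty (-1)^n\bigl(\delta(x-t_n) - \delta(x+t_n)\bigr),
\]
so taking Fourier transforms and using $\widehat{u'}(\xi) = i\xi\,\widehat u(\xi)$ gives $\xi\,\widehat{\sgn\psi_t}(\xi) = -4\sum_n (-1)^n \sin(t_n\xi)$. Combining this with $\widehat{\sgn\psi_t}(\xi) = 1/\Cone$ on $(-\pi,\pi)$ produces \eqref{eq:dercond}, and both directions of the lemma follow from this chain of equivalences.

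The main obstacle is the rigor of the distributional manipulations, coupled with the pointwise divergence of $\sum(-1)^n\sin(t_n\xi)$ (which fails already when $t_n = n+\tfrac12$); the identity must therefore be read in $\mathscr{D}'\bigl((-\pi,\pi)\bigr)$. I would control convergence by writing
\[
\sin(t_n\xi) = \sin\bigl((n+\tfrac12)\xi\bigr)\cos\bigl((n+\tfrac12-t_n)\xi\bigr) - \cos\bigl((n+\tfrac12)\xi\bigr)\sin\bigl((n+\tfrac12-t_n)\xi\bigr),
\]
so that the principal sum $\sum(-1)^n\sin((n+\tfrac12)\xi)$ is a known distributional Fourier expansion of a sawtooth-type function on $(-\pi,\pi)$, while the correction converges in $L^2$ on compact subsets thanks to $(n+\tfrac12-t_n)\in\ell^2$. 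A secondary difficulty, needed for the ``if'' direction, is to justify $\psi_t\in L^1(\R)$ so that the orthogonality relation may be tested against $f=\psi_t$; this follows from the genus-1 canonical product structure combined with the $\ell^2$-control on the zeros of $\psi_t$.
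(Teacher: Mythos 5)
Your proposal is essentially the paper's proof. The orthogonality relation you derive via duality is precisely what the paper cites from [BCOS, Thm.~3.6] as its starting point~\eqref{eq:basicid}; your Fourier condition $\widehat{\sgn\psi_t}=1/\Cone$ on $(-\pi,\pi)$ is the paper's~\eqref{eq:Fourierbasic} (the two are linked by $\widehat{\sgn\psi_t}(\xi)=-\tfrac{4}{\xi}\sum_n(-1)^n\sin(t_n\xi)$); and the angle-addition decomposition you sketch is the very device the paper uses.

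One step should be made fully explicit in the ``if'' direction. From $\xi\bigl(\widehat{\sgn\psi_t}(\xi)-1/\Cone\bigr)=0$ as distributions on $(-\pi,\pi)$ you can only conclude that $\widehat{\sgn\psi_t}-1/\Cone$ equals a constant multiple of $\delta_0$ there, not that it vanishes. Your $L^2$ decomposition is exactly what rules out the delta, but you present it as a device to ``control convergence'' rather than to close this gap. To finish: subtract the half-integer reference sum, which satisfies the analogous identity exactly because the distributional Fourier transform of $\sgn\cos(\pi\cdot)$ vanishes on $(-\pi,\pi)$ (the paper's~\eqref{eq:F0}); the remainder $\sum_n(-1)^n\bigl(\sin t_n\xi-\sin(n+\tfrac12)\xi\bigr)$ is then a bona fide $L^2$ function on $(-\pi,\pi)$ \emph{even after dividing by $\xi$}, since $|\sin(d_n\xi)|\le|d_n\xi|$ and $|1-\cos(d_n\xi)|\le d_n^2\xi^2/2$ combine with $(d_n)\in\ell^2$ and the orthogonality of the system $\{\cos(n+\tfrac12)\xi,\sin(n+\tfrac12)\xi\}$. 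Equating an $L^2$ function with a continuous one leaves no room for a $\delta_0$, and one recovers~\eqref{eq:Fourierbasic}. This is precisely the paper's passage from~\eqref{eq:dercond} through~\eqref{eq:sumC} back to~\eqref{eq:Fourierbasic}.
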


\begin{proof}
We start from the fact that
\begin{equation} \label{eq:basicid} \frac{f(0)}{\Cone}=\int_{-t_1}^{t_1} f(x) dx + \sum_{n=1}^\infty (-1)^n \int_{t_n}^{t_{n+1}} (f(x)+f(-x)) dx \end{equation}
holds for every $f$ in $PW^1$ if and only if $t=\tau$ (see \cite[Thm. 3.6]{BCOS}).
Using that
\[ f(0)=  \int_{-\infty}^\infty f(x) \frac{\sin \pi x}{\pi x} dx\]
and the fact $\mathcal{S}\cap PW^1$ is dense in $PW^1$,
we see by Plancherel's identity that \eqref{eq:basicid} holds for all $f$ in $PW^1$ if and only if
\begin{equation} \label{eq:Fourierbasic}  \sum_{n=1}^\infty  (-1)^n \frac{\sin t_n \xi}{\xi} = - \frac{1}{4\Cone}, \quad -\pi < \xi < \pi ,\end{equation}
holds in the sense of tempered distributions. It is clear that \eqref{eq:Fourierbasic} implies \eqref{eq:dercond}. On the other hand, since the distributional Fourier transform of $\sgn \cos(\pi x)$
vanishes on $(-\pi,\pi)$ (see \cite[p. 188]{HB93}),  we have
\begin{equation}\label{eq:F0}  \sum_{n=0}^\infty  (-1)^{n}\frac{\sin \big(n+\frac12\big) \xi}{\xi} =0 , \quad -\pi < \xi < \pi. \end{equation} Hence \eqref{eq:dercond} implies that
\[  \sum_{n=1}^\infty  (-1)^{n}\big(\sin t_n \xi-\sin \big(n+\frac12\big) \xi \big) =  \sin \frac{\xi}{2} - \frac{\xi}{4\Cone}, \quad -\pi < \xi < \pi. \]
Setting $d_n\coloneqq n+\frac12-t_n$, we may write the left-hand side of this identity as
\[  \sum_{n=1}^\infty  (-1)^{n}\big(\big(\cos d_n \xi - 1\big) \sin \big(n+\frac12\big) \xi -\sin d_n \xi \cos\big(n+\frac12\big)  \xi \big) \]
from which we see that
\[  \frac{1}{\xi} \sum_{n=1}^\infty  (-1)^{n}\big(\sin t_n \xi-\sin \big(n+\frac12\big) \xi \big) \]
is an $L^2$ function by the assumption that $(d_n)$ is in $\ell^2$. This means that \eqref{eq:dercond} yields
\begin{equation} \label{eq:sumC}  \sum_{n=1}^\infty  (-1)^{n}\Big(\frac{\sin t_n \xi}{\xi}-\frac{\sin \big(n+\frac12\big) \xi}{\xi} \Big) =  \frac{\sin \frac{\xi}{2}}{\xi} - \frac{1}{4\Cone}, \quad -\pi < \xi < \pi. \end{equation}
Using again \eqref{eq:F0}, we finally arrive at \eqref{eq:Fourierbasic}.
\end{proof}

Our second characterization of $\tau$  shows that the identity of the preceding lemma can be recast as a reproducing formula for even functions in $PW^1$.
\begin{lemma} \label{lem:frep} Let $t$ be an admissible sequence. Then $t=\tau$ if and only if
\begin{equation} \label{eq:frepr} f(z) \Theta_t(z) =- \frac{f(0)}{4\Cone z} + \sum_{n=1}^{\infty} (-1)^n \frac{t_n z}{(t_n^2-z^2)} f(t_n)\end{equation}
for every even function $f$ in $PW^1$.
\end{lemma}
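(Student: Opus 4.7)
The plan is to deduce both directions of Lemma~\ref{lem:frep} from Lemma~\ref{lem:summform} through the summation formula
\[
\sum_{n=1}^\infty (-1)^n g(t_n) = -\frac{g'(0)}{4\Cone}\qquad (\star)
\]
valid for every odd $g \in PW^1$. Formula $(\star)$ follows from \eqref{eq:dercond} by Fourier duality: since $\hat g$ is continuous, odd, and supported in $[-\pi,\pi]$, one has $g(t_n) = \frac{i}{\pi}\int_0^\pi \hat g(\xi)\sin(\xi t_n)\,d\xi$ and $g'(0) = \frac{i}{\pi}\int_0^\pi \xi\hat g(\xi)\,d\xi$, and substituting these into the left-hand side of $(\star)$ reduces the claim to the pairing of $\hat g$ with $\sum_n(-1)^n\sin(\xi t_n)+\xi/(4\Cone)$, which vanishes on $(-\pi,\pi)$ by \eqref{eq:dercond}.

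For the forward direction, I fix even $f \in PW^1$ and $z \in \mathbb{C} \setminus \{0, \pm t_n\}$ and set
\[
g(w) \coloneqq \frac{w\bigl(f(w) - f(z)\bigr)}{w^2 - z^2}.
\]
Even symmetry of $f$ makes the numerator vanish at $w = \pm z$, so $g$ is entire, odd, of exponential type at most $\pi$, and in $L^2(\mathbb{R})$, with $g(t_n) = t_n(f(t_n)-f(z))/(t_n^2-z^2)$ and $g'(0) = (f(z)-f(0))/z^2$. Applying $(\star)$ to $g$, multiplying by $z$, and using the defining relation $\Theta_t(z) + \frac{1}{4\Cone z} = z\sum_n(-1)^n t_n/(t_n^2-z^2)$ then rearranges to exactly \eqref{eq:frepr}.

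For the converse, assume \eqref{eq:frepr} holds for every even $f \in PW^1$. Writing $f(z) = \frac{1}{\pi}\int_0^\pi \hat f(\xi)\cos(\xi z)\,d\xi$ and $f(t_n) = \frac{1}{\pi}\int_0^\pi \hat f(\xi)\cos(\xi t_n)\,d\xi$, and varying $\hat f$ over a dense class of continuous even functions supported in $[-\pi,\pi]$, \eqref{eq:frepr} becomes equivalent to the pointwise identity
\[
\Theta_t(z)\cos(\xi z) = -\frac{1}{4\Cone z} + \sum_n (-1)^n \frac{t_n z\cos(\xi t_n)}{t_n^2-z^2}, \qquad \xi \in (-\pi,\pi),
\]
for $z \notin \{0,\pm t_n\}$. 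Both sides are Laurent series in $z$ about $z = 0$ with identical principal part $-1/(4\Cone z)$; comparing the coefficient of $z$ yields the identity $\xi^2/(8\Cone) + \sum_n (-1)^n/t_n = \sum_n(-1)^n\cos(\xi t_n)/t_n$ on $(-\pi,\pi)$, whose distributional derivative in $\xi$ is exactly \eqref{eq:dercond}. Lemma~\ref{lem:summform} then gives $t = \tau$.

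The main technical obstacle is that the function $g$ built in the forward direction only lies in $PW^2$, since it decays like $-f(z)/w$ at infinity. Extending $(\star)$ to this larger class requires verifying that the Fourier-side pairing of $\hat g \in L^2([-\pi,\pi])$ with the distribution $\sum_n(-1)^n\sin(\xi t_n)+\xi/(4\Cone)$ is well defined, and that $\sum_n(-1)^n g(t_n)$, which is only conditionally convergent, agrees with this pairing. A safer route would be to first establish \eqref{eq:frepr} for $f$ in an $L^1$-dense subclass of $PW^1$ (for instance, with $\hat f$ vanishing to sufficient order at $\pm\pi$) for which the corresponding $g$ does lie in $PW^1$, and then to pass to the limit using continuity of both sides of \eqref{eq:frepr} in the $PW^1$-norm on $f$.
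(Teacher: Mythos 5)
Your forward direction takes a genuinely different route from the paper's. The paper anti-differentiates the $m=1$ Fourier identity iteratively, integrates each stage against $\widehat{f}$ to produce a family of summation formulas~\eqref{eq:mth} indexed by $m$, and then recognizes the left-hand sides as the Laurent coefficients of $f\Theta_t$. You instead express~\eqref{eq:dercond} once and for all as the summation formula $(\star)$ for odd $g$, and apply it to the single interpolation-type auxiliary function $g(w)=w\bigl(f(w)-f(z)\bigr)/(w^2-z^2)$, with the algebra then rearranging directly into~\eqref{eq:frepr}. This avoids the coefficient-by-coefficient build-up and is cleaner in spirit. Your converse, extracting the coefficient of $z$ from the Laurent expansion and taking a distributional $\xi$-derivative, is a legitimate unpacking of the paper's terse ``all steps can be reversed.''

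However, the gap you flag is real and the remedy you propose does not close it. Your $g$ is never in $PW^1$ for generic $z$: as $|w|\to\infty$ we have $g(w)=-f(z)/w+O(|w|^{-1}|f(w)|)$, and the $-f(z)/w$ tail comes from the constant $f(z)$, not from the decay of $f$, so requiring $\widehat{f}$ to vanish to high order at $\pm\pi$ shrinks only the second term, never the first. Thus there is no nontrivial $L^1$-dense subclass of $PW^1$ on which $g\in PW^1$, and passing to the limit from such a subclass cannot get off the ground. What you actually need is an extension of $(\star)$ to odd entire functions of exponential type $\pi$ that are merely $O(1/|w|)$ on $\R$ (equivalently, $PW^2$). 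Such an extension is available, but it is the analytic crux: one must justify pairing $\widehat{g}\in L^2([-\pi,\pi])$ against the distribution $\sum_n(-1)^n\sin(t_n\xi)+\xi/(4\Cone)$ despite the endpoint singularity, and check that the conditionally convergent sum $\sum_n(-1)^ng(t_n)$ agrees with that pairing. This can be done by the same device the paper uses inside the proof of Lemma~\ref{lem:summform}: subtract the model series $\sum_n(-1)^n\sin\bigl((n+\tfrac12)\xi\bigr)$, invoke the extremal-signature identity~\eqref{eq:F0}, and use that $(n+\tfrac12-t_n)\in\ell^2$ to show the remainder is an $L^2$ function. Without spelling this out, the forward direction is incomplete.
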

\begin{proof}
We set
\[ L_t (2m-1)\coloneqq \sum_{n=1}^\infty \frac{(-1)^n}{t_n^{2m-1}} \]
for $m\ge 1$ and $L_t(-1)\coloneqq -\frac{1}{4\Cone}$ so that
\[ \Theta_t(z)=\sum_{m=0}^\infty L_t(2m-1) z^{2m-1} \]
when $0<|z|<t_1$.

We begin by assuming that \eqref{eq:dercond} holds. We observe that this identity can be written as
\[ L_t(1)-\frac{L_t(-1)}{2}\xi^2 =\sum_{n=1}^\infty \frac{(-1)^n}{t_n} \cos (t_n \xi), \quad -\pi < \xi < \pi ,\]
which now holds pointwise, by comparison with Fourier series in terms of $\cos(n+\frac12)\xi $  and $\sin(n+\frac12)\xi$.
Anti-differentiating iteratively, we get the formula
\[ L_t(2m-1)-\frac{L_t(2m-3)}{2}\xi^2 + \cdots + (-1)^m \frac{L_t(-1)}{(2m)!} \xi^{2m} = \sum_{n=1}^\infty \frac{(-1)^n}{t_n^{2m-1}} \cos (t_n \xi),  \]
which is valid for $-\pi<\xi <\pi$.
Integrating both sides of this identity against an even function $\widehat{f}$ in $\mathcal{S}$ supported on $[-\pi,\pi]$, we see that
\begin{equation} \label{eq:mth} L_t(2m-1)f(0)+\frac{L_t(2m-3)}{2}f''(0) + \cdots + \frac{L_t(-1)}{(2m)!} f^{(2m)}(0) = \sum_{n=1}^\infty \frac{(-1)^n}{t_n^{2m-1}} f(t_n). \end{equation}
The precaution that $\widehat{f}$ in $\mathcal{S}$ is only needed when $m=1$, but in any case,  this formula holds for all even functions $f$ in $PW^1$ by the Plancherel--P\'{o}lya inequality and the fact that $\mathcal{S}\cap PW^1$ is dense in $PW^1$.
We observe that the left-hand side of \eqref{eq:mth} is the $(2m-1)$th coefficient of the Laurent expansion of $\Theta_t(z)f(z)$ about $0$. Since
\[ \sum_{m=1}^\infty \sum_{n=1}^\infty \frac{(-1)^n}{t_n^{2m-1}} f(t_n) z^{2m-1} = \sum_{n=1}^{\infty} (-1)^n \frac{t_n z}{(t_n^2-z^2)} f(t_n), \]
we arrive at \eqref{eq:frepr}.

We finally observe that \eqref{eq:frepr} implies \eqref{eq:dercond}, simply because all steps in the above deduction can be reversed.
\end{proof}

Before turning to the proof of Theorem~\ref{thm:charphi}, we record an approximation result based on standard arguments from the theory of Paley--Wiener spaces. We associate with every admissible sequence $t$ a function
\[ \Psi_t(z)\coloneqq \prod_{n=1}^\infty \Big(1+(-1)^n \frac{z}{t_n}\Big). \] We will let $ t^\ast$ denote the symmetric sequence $(\pm t_n)$ (the zero set of $\psi_t$) and $t^{\circ}=((-1)^{n+1}t_n)$ be the zero set of $\Psi_t$.

\begin{lemma} \label{lem:adm}
If $t=(t_n)_{n=1}^\infty$ is an admissible sequence, then
\begin{equation} \label{eq:psit} \Psi_t(z) \asymp \frac{e^{\frac{\pi}{2}|y|}}{(1+|z|)} \frac{\operatorname{dist}(z,t^\circ)}{(1+\operatorname{dist}(z,t^\circ))} , \quad z=x+iy\in \mathbb{C} . \end{equation}
Moreover, if $f$ is in $PW^1$ and vanishes on $t^\ast$, then $f$ is a constant multiple of $\psi_t$.
\end{lemma}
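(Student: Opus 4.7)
The plan is to prove the two-sided bound \eqref{eq:psit} by comparing $\Psi_t$ with the reference function $G(z)\coloneqq \prod_{n=1}^\infty(1+(-1)^n z/(n+\tfrac12))$ corresponding to the unperturbed sequence $s_n=n+\tfrac12$, whose zero set is $s^\circ=((-1)^{n+1}(n+\tfrac12))_n$, and then to deduce the second assertion from \eqref{eq:psit} via Hadamard factorization. Since $\Psi_t$ has real Taylor coefficients, $|\Psi_t(iy)|^2=\Psi_t(iy)\Psi_t(-iy)=\psi_t(iy)=\prod_n(1+y^2/t_n^2)$; specializing to $t_n=n+\tfrac12$ and using the classical identity $\cosh(\pi y)=\prod_{n=0}^\infty(1+y^2/(n+\tfrac12)^2)$ gives $|G(iy)|\asymp e^{\pi|y|/2}/(1+|y|)$. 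The identity $G(z)G(-z)=\cos(\pi z)/(1-4z^2)$ together with standard estimates for $|\cos(\pi z)|$ and a Phragm\'en--Lindel\"of argument applied to $G(z)e^{\mp i\pi z/2}$ in the upper and lower half-planes then yields \eqref{eq:psit} for $G$ (with $t^\circ$ replaced by $s^\circ$).

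To transfer the estimate to $\Psi_t$, I would set $d_n\coloneqq n+\tfrac12-t_n\in\ell^2$ and analyze the logarithmic ratio
\[ \log\frac{\Psi_t(z)}{G(z)}=\sum_{n=1}^\infty\Bigl[\log\bigl(1+(-1)^n z/t_n\bigr)-\log\bigl(1+(-1)^n z/(n+\tfrac12)\bigr)\Bigr] \]
off a fixed neighborhood of $t^\circ\cup s^\circ$. Term-by-term Taylor expansion produces a leading linear-in-$z$ contribution $z\sum_n(-1)^n d_n/[t_n(n+\tfrac12)]$, which converges by Cauchy--Schwarz, and absolutely summable higher-order corrections controlled by sums of the form $\sum d_n^2/n^k$. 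This shows $|\Psi_t(z)/G(z)|\asymp 1$ uniformly off the zero/pole set; a local analysis near each zero $(-1)^{n+1}t_n\in t^\circ$, estimating $\Psi_t'((-1)^{n+1}t_n)$ through the perturbation, upgrades the bounded-ratio estimate to the full \eqref{eq:psit}.

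For the second assertion, Hadamard factorization gives $f=\psi_t\cdot h$ with $h$ entire, since $\psi_t$ has simple zeros exactly on $t^\ast$ and $f$ vanishes there. Combining the Paley--Wiener bound $|f(z)|\leq\|f\|_1 e^{\pi|y|}$ with the lower estimate for $|\psi_t(z)|=|\Psi_t(z)\Psi_t(-z)|$ derived from \eqref{eq:psit}, one obtains $|h(z)|\lesssim(1+|z|)^2$ off a neighborhood of $t^\ast$ and hence globally by the maximum principle, so $h$ is a polynomial of degree at most two. The $L^1(\R)$ hypothesis on $f$, together with the on-axis asymptotic $|\psi_t(x)|\asymp|\cos\pi x|/(1+x^2)$ (immediate from \eqref{eq:psit}), then rules out the quadratic and linear terms and forces $h$ to be constant, so $f=c\psi_t$. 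The main obstacle will be the local analysis in the perturbation step: the zero sets of $\Psi_t$ and $G$ are distinct, so converting the bounded log-ratio estimate into the precise distance-factor asymptotic requires delicate bookkeeping with $d_n$ at each pair of nearby zeros.
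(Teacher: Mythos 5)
Your overall strategy is the same as the paper's: compare $\Psi_t$ to the unperturbed entire function $G(z)=\prod_{n\ge1}(1+(-1)^n z/(n+\tfrac12))$, control the ratio using $d_n=n+\tfrac12-t_n\in\ell^2$ and Cauchy--Schwarz, extend to neighborhoods of the zeros by the maximum principle (the paper) or by a local derivative estimate (you), and then obtain the second claim from \eqref{eq:psit} together with the Paley--Wiener growth bounds. The paper simply packages the comparison more cleanly by writing
\[\Psi_t(z)=C\,\frac{\sin\frac{\pi}{2}(z+\tfrac12)}{\frac{\pi}{2}(z+\tfrac12)}\prod_{n\ge1}\Bigl(1-\frac{d_n}{n+\frac12+(-1)^n z}\Bigr),\quad C=\prod_{n\ge1}\frac{n+\frac12}{t_n},\]
which is precisely your $\log(\Psi_t/G)$ decomposition, written as a product.

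There is, however, a genuine gap in the way you carry out the transfer step. You claim that ``term-by-term Taylor expansion produces a leading linear-in-$z$ contribution $z\sum_n(-1)^n d_n/[t_n(n+\tfrac12)]$'' and then conclude $|\Psi_t/G|\asymp1$ off the zeros. This cannot be right as stated: if $\log(\Psi_t/G)\approx cz+O(1)$ with $c=\sum_n(-1)^n d_n/[t_n(n+\tfrac12)]\ne0$, the ratio would grow like $e^{c\re z}$, not stay $\asymp1$. The formula you wrote is only the small-$z$ coefficient and does not control the behavior for $|z|\to\infty$, which is exactly what \eqref{eq:psit} is about. The correct (and what the paper does) is to keep the exact perturbation term $\dfrac{d_n}{n+\tfrac12+(-1)^n z}$ in each factor; this is not a power series in $z$, and it is uniformly bounded and $\ell^2$-summable in $n$ once $\operatorname{dist}(z,t^\circ\cup s^\circ)$ is bounded below, so Cauchy--Schwarz gives $\prod_n\bigl(1-\frac{d_n}{n+1/2+(-1)^n z}\bigr)\asymp1$ uniformly off the zero/pole set. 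Your plan would go through once this step is replaced by that argument. (Also, the subsequent ``estimating $\Psi_t'$ at the zero'' is workable, but note the paper's maximum-modulus argument on discs of radius $\tfrac14$ around the points of $t^\circ$ and $s^\circ$ is more robust and immediate; it directly gives the distance factor in \eqref{eq:psit}.)

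Your proof of the second assertion is fine and, if anything, a slight streamlining of the paper's: you use \eqref{eq:psit} and the Paley--Wiener bound to conclude directly that $h=f/\psi_t$ has polynomial growth of degree $\le 2$, so $h$ is a polynomial, and then $f\in L^1(\R)$ forces $h$ to be constant. The paper instead first notes $h$ (there called $\omega$) has exponential type $0$ and rules out infinitely many zeros; both routes work, and yours avoids the case distinction. One small caveat: on the real axis $|\psi_t(x)|\asymp\operatorname{dist}(x,t^\ast)/(1+x^2)$, not $|\cos\pi x|/(1+x^2)$, since the zero sets $t^\ast$ and $\Z+\tfrac12$ are genuinely different; this does not affect the conclusion, but the precise form matters if one wants to be rigorous in ruling out the linear and quadratic terms of $h$.
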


\begin{proof}
We set $d_n\coloneqq n+\frac12-t_n$ and express $\Psi_t$ in the following way:
	\begin{equation} \label{eq:psirep}
	\Psi_t(z) = C \frac{\sin \frac{\pi}{2}(z+\frac12)}{\frac{\pi}{2}(z+\frac12)} \prod_{n=1}^\infty \left(1-\frac{d_n}{n+\frac{1}{2}+(-1)^n z} \right),
	\end{equation}
where
\begin{equation} \label{eq:Cdef}  C\coloneqq \prod_{n=1}^\infty  \frac{\big(n+\frac12\big)}{t_n}. \end{equation}
Since $(d_n)$ is in $\ell^2$, we see by an application of the Cauchy--Schwarz inequality that the infinite product in \eqref{eq:psirep} is $\asymp 1$ when the distance from $z$ to the two sequences $t^\circ$ and $((-1)^{n+1}(n+\frac12))$ exceeds, say, $\frac14$. Hence the two-sided bound \eqref{eq:psit} holds for such $z$. We then extend this bound to all $z$ by an application of the maximum modulus principle in the union of the discs of radius $\frac14$ around the points $(-1)^{n+1}t_n$ and $(-1)^{n+1}(n+\frac12)$.

The second part of the lemma follows from what was just proved. Indeed, an $f$
in $PW^1$ vanishing on $t^\ast$ must be of the form $\omega\psi_t$ for some entire function $\omega$ of exponential type
$0$. Then either $\omega$ is
a polynomial or it has infinitely many zeros. In the latter case, consider the
function $g(z) \coloneqq \omega(z)/((z-a)(z-b))$ where $a,b$ are two zeros of $\omega$.
Since $f$ is bounded on the real line,  $g$ is also bounded there in view of \eqref{eq:psit}. But being of exponential type $0$, $g$ must then be a constant.
We are therefore left with the possibility that $\omega$ is a polynomial. Using again \eqref{eq:psit} and the assumption that $f$ is in
$L^1(\mathbb R)$, we conclude that $\omega$ must be a constant.
\end{proof}

\begin{proof}[Proof of Theorem~\ref{thm:charphi}]
In view of Lemma~\ref{lem:frep}, we need to show that \eqref{eq:charphi} holds if and only if \eqref{eq:frepr} holds for all $f$ in $PW^1$. The implication from \eqref{eq:frepr} to  \eqref{eq:charphi} is trivial since it is a matter of setting $f=\psi_t$. To prove the reverse implication, we begin by assuming that \eqref{eq:charphi} holds. We let $f$ be an arbitrary even function in $PW^1$.
We may assume without loss of generality that $f(x)\ll (1+x^2)^{-1}$ when $|x|\to \infty$, since functions of this kind form a dense subset of $PW^1$, and set
\[ F(z)\coloneqq f(z)\Theta_t(z)+\frac{f(0)}{4\Cone z}-\sum_{n=1}^\infty (-1)^n \frac{t_n z}{(t_n^2-z^2)} f(t_n), \]
which is seen to be an entire function. Hence $F\psi_t$ is an entire function that vanishes at $t_n$ for all $n\ge1$. Using \eqref{eq:charphi}, we find that
\begin{equation} \label{eq:Fth} F(z) \psi_t (z) = -\frac{f(z)}{4\Cone z}+\psi_t(z)\left(\frac{f(0)}{4\Cone z}-\sum_{n=1}^\infty (-1)^n \frac{t_n z}{(t_n^2-z^2)} f(t_n)\right) \end{equation}
from which we see that $F\psi_t$ is in $PW^1$. By Lemma~\ref{lem:adm}, $F$ must be a constant function. But \eqref{eq:Fth} implies that
\[ F(x)\psi_t(x) \ll \frac{1}{1+|x|^3} + \frac{|\psi_t(x)|}{1+|x|}, \]
and this can only hold if $F(x)\equiv 0$, since $|\psi_t(x)| \asymp \operatorname{dist}(x,t)/(1+|x|)^2$ in view of Lemma~\ref{lem:adm}.
  \end{proof}

\section{The second order linear differential equation of Theorem~\ref{thm:main}}
\label{sec:ode}
We will first show that Corollary~\ref{thm:diff} implies \eqref{eq:diffbasic}. To this end, we introduce the notation
\[ \mathcal{L}f(z)\coloneqq z^2f''(z) + \Big(2z-\frac{1}{2\Cone}\Big)f'(z) + \Big(\frac{\pi^2}{4}z^2+\frac{L_{\tau}(1)}{2\Cone}\Big)f(z). \]
We let $PW_{\frac{\pi}{2}}^\infty$ denote the Bernstein space consisting of entire functions of exponential type at most $\frac{\pi}{2}$ that are bounded on the real line. We will require the following lemma.
\begin{lemma}\label{lem:Bernstein}
The function $\mathcal{L}\Phi$ is in $PW_{\frac{\pi}{2}}^\infty$.
\end{lemma}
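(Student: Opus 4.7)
The plan is to verify separately the two defining properties of $PW_{\frac{\pi}{2}}^\infty$: that $\mathcal L \Phi$ has exponential type at most $\tfrac{\pi}{2}$, and that it is bounded on $\mathbb R$.

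The type bound will be routine. By Lemma~\ref{lem:adm} applied with $t = \tau$, the function $\Phi$ belongs to $PW_{\frac{\pi}{2}}^\infty$, and Bernstein's inequality then places $\Phi'$ and $\Phi''$ in the same Bernstein space. Since $\mathcal L$ has polynomial coefficients and multiplication by polynomials preserves exponential type, $\mathcal L \Phi$ is entire of type at most $\tfrac{\pi}{2}$.

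For boundedness, I would start from the algebraic identity
\[ \mathcal L \Phi(z) \;=\; z\Bigl[G''(z) + \tfrac{\pi^2}{4}G(z)\Bigr] \;-\; \tfrac{1}{2\Cone}\bigl[\Phi'(z) - L_\tau(1)\Phi(z)\bigr], \]
where $G(z) \coloneqq z\Phi(z)$. The second bracket is bounded on $\mathbb R$ (since $\Phi, \Phi' \in PW_{\frac{\pi}{2}}^\infty$), and Lemma~\ref{lem:adm} also shows $G \in PW_{\frac{\pi}{2}}^\infty$, so the task reduces to bounding $z\bigl[G''(z) + \tfrac{\pi^2}{4}G(z)\bigr]$ on $\mathbb R$. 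Expanding $G'' + \tfrac{\pi^2}{4}G = 2\Phi' + z(\Phi'' + \tfrac{\pi^2}{4}\Phi)$, multiplying by $G(-z) = -z\Phi(-z)$, and invoking the quadratic identity of Corollary~\ref{thm:diff} in the rearranged form $\Phi'(z)\Phi(-z) = \frac{\varphi(z) - 1}{4\Cone z^2} + \frac{\varphi'(z)}{2}$ yields
\[ \bigl[G''(z) + \tfrac{\pi^2}{4}G(z)\bigr]\, G(-z) \;=\; R(z), \]
where
\[ R(z) \;\coloneqq\; -\frac{\varphi(z) - 1}{2\Cone z} - z\varphi'(z) - z^2\bigl[\Phi''(z) + \tfrac{\pi^2}{4}\Phi(z)\bigr]\Phi(-z); \]
equivalently, $z\bigl[G''(z) + \tfrac{\pi^2}{4}G(z)\bigr] = -R(z)/\Phi(-z)$.

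The crucial step, which I expect to be the main obstacle, is proving the pointwise decay $R(x) = O(1/|x|)$ as $|x| \to \infty$. The first two terms of $R$ satisfy this because $\varphi(x) \to 0$ at rate $1/x^2$ and $\varphi'(x) = O(1/x^2)$. For the third term I would use the product representation~\eqref{eq:psirep} together with $d_n \in \ell^2$ to obtain the asymptotic $\Phi(x) = \frac{2C}{\pi x}\sin\!\bigl(\tfrac{\pi x}{2} + \tfrac{\pi}{4}\bigr) + O(1/x^2)$ (uniformly for $x$ bounded away from $\tau^\circ$), and then the Bessel-type cancellation $\bigl(\frac{\sin(\pi z/2 + \pi/4)}{z}\bigr)'' + \frac{\pi^2}{4}\cdot\frac{\sin(\pi z/2 + \pi/4)}{z} = O(1/z^2)$ gives $\Phi''(x) + \tfrac{\pi^2}{4}\Phi(x) = O(1/x^2)$; combined with $\Phi(-x) = O(1/|x|)$ this makes the third term $O(1/|x|)$. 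Once $R(x) = O(1/|x|)$ is established, the two-sided estimate $|\Phi(-x)| \asymp 1/|x|$ away from the zeros of $\Phi(-\cdot)$ (Lemma~\ref{lem:adm}) bounds $-R(z)/\Phi(-z)$ on $\mathbb R$ outside these zeros, and since the quotient is the entire function $z\bigl[G'' + \tfrac{\pi^2}{4}G\bigr]$ (by cancellation of the zeros of $R$ against those of $\Phi(-\cdot)$), the bound extends to all of $\mathbb R$, concluding the proof.
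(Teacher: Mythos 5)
Your algebraic setup is correct: the identity $\mathcal L\Phi(z) = z\bigl[(z\Phi)''(z) + \tfrac{\pi^2}{4}(z\Phi)(z)\bigr] - \tfrac{1}{2\Cone}\bigl[\Phi'(z) - L_\tau(1)\Phi(z)\bigr]$ checks out, the second bracket is handled by Lemma~\ref{lem:adm} plus Bernstein, and the rearrangement of Corollary~\ref{thm:diff} into $\Phi'(z)\Phi(-z) = \frac{\varphi(z)-1}{4\Cone z^2} + \frac{\varphi'(z)}{2}$ and the resulting expression for $R$ are all valid (and invoking Corollary~\ref{thm:diff} here is legitimate, since it precedes Lemma~\ref{lem:Bernstein}). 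But the step you correctly flag as ``the main obstacle'' is where the proof breaks down. The asymptotic $\Phi(x) = \frac{2C}{\pi x}\sin\bigl(\tfrac{\pi x}{2} + \tfrac{\pi}{4}\bigr) + O(1/x^2)$ is not a consequence of Lemma~\ref{lem:adm} (which gives only the two-sided bound $\asymp$, with constants, not a precise main term with a quantitatively small error) nor of $(d_n)\in\ell^2$. Indeed, for a general admissible sequence with, say, $d_n\sim n^{-1/2-\varepsilon}$, the infinite product in~\eqref{eq:psirep} differs from $1$ by roughly $x^{-1/2-\varepsilon}$ near $x$, which is much bigger than $O(1/x)$; so the claimed asymptotic is actually false at the level of generality the hypotheses allow. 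Furthermore, even if a pointwise $O(1/x^2)$ bound on the error were somehow available, to conclude $\Phi''(x)+\tfrac{\pi^2}{4}\Phi(x)=O(1/x^2)$ you would need the same control in a complex tube (to apply Cauchy estimates to the error), and the error function $\Phi(z)-\frac{2C}{\pi z}\sin(\cdots)$ is not even entire, so a Phragm\'en--Lindel\"of upgrade of the real-line estimate is not available either. In short, the bound $\Phi''+\tfrac{\pi^2}{4}\Phi=O(1/x^2)$, which your decomposition reduces the lemma to, is not established.

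The paper avoids this entirely by expanding $\Phi$ in a cardinal series $\Phi(z)=\sum_k\delta_k\sinc\bigl(\tfrac{\pi}{2}(z-2k+\tfrac12)\bigr)$ with samples $\delta_k$ taken near the zeros of $\Phi$. This is an \emph{exact} representation, not an asymptotic one, so it can be differentiated term by term; the combination $z^2\Phi''+2z\Phi'+\tfrac{\pi^2}{4}z^2\Phi$ then simplifies because the corresponding second-order combination of each $\sinc$ term gains an extra power of $(z-2k+\tfrac12)^{-1}$, and in the end only pointwise decay of the samples $\delta_k$ is needed, with no differentiated asymptotic anywhere. If you want to salvage your approach, you would need to replace the heuristic asymptotic by an argument of this exact (series) kind, or else prove directly that the entire function $(z\Phi)''+\tfrac{\pi^2}{4}(z\Phi)$, which is of type $\tfrac{\pi}{2}$ and bounded on $\R$, actually decays like $1/|x|$; neither is routine from what is available at this point in the paper.
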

\begin{proof}It suffices to check that the function
\begin{equation} \label{eq:simpler} G(z)\coloneqq z^2\Phi''(z) + 2z\Phi'(z) + \frac{\pi^2}{4}z^2\Phi(z) \end{equation}
belongs to $PW_{\frac{\pi}{2}}^\infty$. We set $\delta_k\coloneqq \Phi(2k+\frac12)$ and start from the expansion
\[ \Phi(z)=\sum_{k\in \mathbb{Z}} \delta_k  \sinc \big(\frac{\pi}{2}(z-2k+\frac12)\big).	 \]
Then
\[ \Phi'(z)=\frac{\pi}{2} \sum_{k\in \mathbb{Z}}\delta_k \frac{\cos\big(\frac{\pi}{2}\big(z-2k+\frac12\big)\big)}{\frac{\pi}{2}\big(z-2k+\frac12\big)}-\sum_{k\in \mathbb{Z}}\delta_k \frac{\sin\big(\frac{\pi}{2}\big(z-2k+\frac12\big)\big)}{\frac{\pi}{2}\big(z-2k+\frac12\big)^2}  \]
and
\[ \Phi''(z)=-\frac{\pi^2}{4} \Phi(z)-\pi \sum_{k\in \mathbb{Z}}\delta_k \frac{\cos\big(\frac{\pi}{2}\big(z-2k+\frac12\big)\big)}{\frac{\pi}{2}\big(z-2k+\frac12\big)^2}+2\sum_{k\in \mathbb{Z}}\delta_k \frac{\sin\big(\frac{\pi}{2}\big(z-2k+\frac12\big)\big)}{\frac{\pi}{2}\big(z-2k+\frac12\big)^3} . \]
Plugging these expressions into \eqref{eq:simpler} and simplifying, we find that
\begin{align*} G(z)= & 2 \sum_{k\in \mathbb{Z}}\delta_k z\big(\frac12-2k\big)\frac{\cos\big(\frac{\pi}{2}\big(z-2k+\frac12\big)\big)}{\big(z-2k+\frac12\big)^2} \\
& -\frac{4}{\pi}
\sum_{k\in \mathbb{Z}}\delta_k z\big(\frac12-2k\big)\frac{\sin\big(\frac{\pi}{2}\big(z-2k+\frac12\big)\big)}{\frac{\pi}{2}\big(z-2k+\frac12\big)^3}.  \end{align*}
By Lemma~\ref{lem:adm}, $\delta_k=\Phi(2k+\frac12)=O(\frac{1}{k^2+1})$ when $|k|\to \infty$. Hence each of the two latter sums can be estimated trivially, and we thus find that they are bounded for real $x$ at a positive distance from the points $\frac12-2k$. Then $G(x)$ itself is bounded on the real line, since we may deal with points near $\frac12-2k$ by using Taylor expansions of respectively
$\cos\big(\frac{\pi}{2}\big(z-2k+\frac12\big)\big)$ and $\sin\big(\frac{\pi}{2}\big(z-2k+\frac12\big)\big)$ around $\frac12-2k$.
 \end{proof}

\begin{proof}[Proof of~\eqref{eq:diffbasic}]
Our goal is now to show that $\mathcal{L} \Phi$ vanishes on the set $\{0\}\cup\{(-1)^{n+1}\tau_n\}_{n=1}^\infty$. We get immediately $\mathcal{L}\Phi(0)=0$ because $\Phi'(0)=L_\tau(1)$. As to the assertion that $\mathcal{L}\Phi((-1)^{n+1}\tau_n)=0$, we need to check that
\begin{equation} \label{eq:crucial} \tau_n^2 \Phi''\big((-1)^{n+1}\tau_n\big)=\Big(2(-1)^{n}\tau_n+\frac{1}{2\Cone}\Big) \Phi'\big((-1)^{n+1}\tau_n\big) \end{equation}
holds for every $n\ge 1$. To this end, we start from \eqref{eq:Aeq} which we write as
\begin{equation} \label{eq:quadratic} z^2 \big(\Phi'(z)\Phi(-z)+\Phi'(-z)\Phi(z)\big)-\frac{1}{2\Cone} \Phi(z)\Phi(-z)=-\frac{1}{2\Cone}. \end{equation}
Differentiating this equation and setting $z=(-1)^{n+1}\tau_n$, we find that
\[ \Phi\big((-1)^n \tau_n\big)\Big(\tau_n^2 \Phi''\big((-1)^{n+1} \tau_n \big)+\big(2(-1)^{n+1}\tau_n-\frac{1}{2\Cone}\big)\Phi'\big((-1)^{n+1} \tau_n \big)\Big)=0,  \]
which yields \eqref{eq:crucial} since $\Phi((-1)^n\tau_n)\neq 0$ for every $n\ge 1$.

By the definition of $\Phi$ and what was just shown, $\frac{\mathcal{L}\Phi}{z\Phi}$ is an entire function. Invoking
Lemma~\ref{lem:Bernstein} along with Lemma~\ref{lem:adm}, we may therefore employ Liouville's theorem to infer that
\[ \mathcal{L} \Phi (z)= C z \Phi(z) \]
for some constant $C$. But the derivative of  $\mathcal{L} \Phi (z)$ at $0$ is
\[ 2\Phi'(0)-\frac{1}{2\Cone}\Phi''(0)+\frac{(\Phi'(0))^2}{2\Cone}, \]
and we see that this equals $0$ by differentiating  \eqref{eq:quadratic} twice and evaluating at $0$. Hence $C=0$ since $\Phi(0)=1$ , and we conclude that
	\[ \mathcal{L} \Phi (z)=0. \qedhere \]
\end{proof}
Since $\phi(z)=\Phi(z)\Phi(-z)$, the differential equation $ \mathcal{L} \Phi (z)=0$ implies the following.
\begin{corollary}\label{cor:thirdorder}
The H\"ormander--Bernhardsson function $\phi(z)$ satisfies
	\[\phi'''(z) + \frac6z\phi''(z) +
	\Big(\pi^2+\frac{6\Cone+2L_{\tau}(1)}{\Cone
		z^2}-\frac{1}{4\Cone^2z^4}\Big)\phi'(z) +
	\Big(\frac{2\pi^2}{z}+\frac{2L_{\tau}(1)}{\Cone z^3}\Big)\phi(z)= 0\,. \]
\end{corollary}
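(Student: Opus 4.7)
The plan is to derive the third-order ODE by direct algebraic elimination, exploiting the factorization $\phi(z) = \Phi(z)\Psi(z)$ with $\Psi(z) := \Phi(-z)$. The first step is to observe that substituting $-z$ for $z$ in~\eqref{eq:diffbasic} shows that $\Psi$ satisfies the companion equation
\begin{equation*}
z^2 \Psi''(z) + \Big(2z + \tfrac{1}{2\Cone}\Big)\Psi'(z) + \Big(\tfrac{\pi^2}{4}z^2 + \tfrac{L_{\tau}(1)}{2\Cone}\Big)\Psi(z) = 0,
\end{equation*}
differing from \eqref{eq:diffbasic} only in the sign of the $1/(2\Cone)$ term. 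Together these two ODEs let one solve for $\Phi''$ and $\Psi''$ as $\mathbb{C}(z)$-linear combinations of $\Phi,\Phi',\Psi,\Psi'$.

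Writing $a = \frac{1}{2\Cone}$, $b=\frac\pi2$, $c = \frac{L_{\tau}(1)}{2\Cone}$, and introducing the auxiliaries $W := \Phi'\Psi - \Phi\Psi'$ and $R := \Phi'\Psi'$, the Leibniz rule applied to $\phi''$ combined with the two ODEs gives
\begin{equation*}
z^2\phi''(z) + 2z\phi'(z) + 2(b^2 z^2 + c)\phi(z) \;=\; aW(z) + 2z^2 R(z).
\end{equation*}
Subtracting the two ODEs (after multiplication by $\Psi$ and $\Phi$ respectively) yields the much cleaner identity $(z^2 W)' = a\phi'$, so $z^2 W = a\phi + K$ for some constant $K$. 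Likewise, differentiating $R$ and applying both ODEs once more gives $z^2 R' + 4zR = -(b^2 z^2+c)\phi'$, so $R$ itself can be read off from the displayed relation above in terms of $\phi, \phi', \phi''$ and $W$.

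To produce the third-order ODE, I would differentiate the displayed expression for $z^2 \phi''$ once more to obtain $z^2\phi'''$ as a combination of $\phi, \phi', \phi'', W, R, W', R'$, and substitute the three relations above to eliminate $W, R, W', R'$ in favor of $\phi$ and its derivatives. Two simplifications occur automatically: the integration constant $K$ must disappear (since the target ODE is linear and homogeneous), and an $a^2\phi/z^3$ contribution arising from both $2aW/z$ and $4zR$ appears twice with opposite signs. After dividing through by $z^2$, the identity that remains is exactly the ODE asserted in Corollary~\ref{cor:thirdorder}.

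The main obstacle here is not conceptual but purely bookkeeping: one must track all the $z$-dependent coefficients carefully through the elimination. The fact that the integration constant $K$ and the stray $a^2\phi/z^3$ term both cancel on their own provides a reassuring consistency check---their failure to cancel would signal an algebra error.
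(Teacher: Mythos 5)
Your elimination scheme is correct and gives the same ODE. I checked the key identities: from $\phi'' = \Phi''\Psi + 2\Phi'\Psi' + \Phi\Psi''$ and the two companion equations one gets $z^2\phi'' + 2z\phi' + 2(b^2z^2+c)\phi = aW + 2z^2R$; from $W' = \Phi''\Psi - \Phi\Psi''$ one gets $z^2W' = -2zW + a\phi'$, i.e.\ $(z^2W)' = a\phi'$; and from $R' = \Phi''\Psi' + \Phi'\Psi''$ one gets $z^2R' + 4zR = -(b^2z^2+c)\phi'$. Differentiating the first relation and substituting, the terms $4zR$ cancel directly, and when you replace $aW'$ via $aW' = \tfrac{a^2\phi'}{z^2} - \tfrac{2aW}{z}$ and then substitute $aW$ from the first relation (or equivalently $W=(a\phi+K)/z^2$), the $K$- and $a^2\phi/z^3$-terms indeed cancel between the $aW'$ and $4zR$ contributions. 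Dividing by $z^2$ gives precisely Corollary~\ref{cor:thirdorder}.

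This route is genuinely different from the paper's. The paper first applies the gauge transformation $g(z)=e^{1/(4\Cone z)}\Phi(z)$, after which the first-derivative term loses its odd part and $g(-z)$ satisfies the \emph{same} self-adjoint-form equation $g''+\tfrac2z g'+qg=0$ as $g(z)$; then $\phi=g(z)g(-z)$ is a product of two solutions of a single second-order ODE, so it satisfies the symmetric square $y'''+3py''+(2p^2+p'+4q)y'+(4pq+2q')y=0$, a standard closed formula that immediately yields the claim. Your version works entirely in the original variables $\Phi$, $\Psi=\Phi(-z)$ and replaces the symmetric-square construction by a hands-on elimination of the auxiliary products $W=\Phi'\Psi-\Phi\Psi'$ and $R=\Phi'\Psi'$; it is a bit more computational (you must track and cancel the Wronskian-like quantity and the constant $K$) but requires no prior familiarity with symmetric squares. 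The paper's conjugation trick buys you the clean formula and makes the ``same equation for $g(-z)$'' observation do the work; your approach buys transparency at the cost of a longer bookkeeping chain, and the self-cancellations you noticed ($K$ and $a^2\phi/z^3$) are exactly the signature that the two companion ODEs differ only in the sign of $a$.
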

\begin{proof}
	From $\mathcal{L}\Phi(z)=0$ we see that the function $g(z)=e^{\frac{1}{4\Cone z}}\Phi(z)$ satisfies
	\begin{equation}\label{eq:gdiffeq1}
		g''(z)+\frac{2}{z}g'(z) + \Big(\frac{\pi^2}{4}+\frac{L_{\tau}(1)}{2\Cone z^2}-\frac{1}{16\Cone ^2z^4}\Big)g(z) 
		= 0.
	\end{equation}
	It is easy to see that $g(-z)$ satisfies the same differential equation. Then $\phi(z)=g(z)g(-z)$ satisfies a third order differential equation that is the symmetric square of the differential operator in~\eqref{eq:gdiffeq1}, and computing it directly gives the claim.
\end{proof}

Note that the differential equations yield a recursion relation for the Taylor coefficients of $\Phi$ and $\phi$ which therefore can be written as polynomials  in $\pi$, $\Cone$, and $L_{\tau}(1)$ with rational coefficients. The first few terms in these Taylor expansions are
	\[	
	\Phi(z) = 1+L_{\tau}(1)z + \Big(\frac{L_{\tau}(1)^2}{2}+2L_{\tau}(1)\Cone\Big)z^2
	+\Big(\frac{L_{\tau}(1)^3}{6}+\frac{8}{3}L_{\tau}(1)^2\Cone+8L_{\tau}(1)\Cone^2+\frac{\pi^2\Cone}{6}\Big)z^3 + \cdots\]
and
	\begin{equation} \label{eq:phialg} \phi(z) = 1 + 4\Cone L_{\tau}(1)z^2 + \big(96L_{\tau}(1)\Cone^3+(24L_{\tau}(1)^2+2\pi^2)\Cone^2\big)z^4+\cdots\,.\end{equation}

We finally note that the recursion relation satisfied by the Taylor coefficients of any solution $f$ of \eqref{eq:diffbasic} (see \eqref{eq:recursion} below) shows that $f$ is uniquely determined by its value at $0$ and hence must be a complex scalar times $\Phi$. 

\section{A family of differential operators and a commutation relation}
\label{sec:family}
For $a,b$ in $\C^{*}$, consider the family of linear differential equations
$\mathcal{L}_{a,b}f=\lambda f$, where
\begin{equation} \label{eq:diffspecial}
	\mathcal{L}_{a,b}(f)(z) := z^2f''(z) + (2z-a)f'(z) + b^2z^2f(z)\,.
\end{equation}
This differential equation has irregular singularities at $0$ and at $\infty$ and no
other singularities in $\C^{*}$. If $R_k$ denotes the rescaling $(R_kf)(z)\coloneqq f(kz)$, then we have $\mathcal{L}_{a,b}R_k=R_k\mathcal{L}_{ka,k^{-1}b}$, so up to conjugation $\mathcal{L}_{a,b}$ only depends on the product $ab$. We will also need the computation
	\begin{equation} \label{eq:diffspecialalt}
	e^{\frac{a}{2z}}\mathcal{L}_{a,b}(e^{-\frac{a}{2z}}g) = z^2g''(z)+2zg'(z)+\Big(b^2z^2-\frac{a^2}{4z^2}\Big)g(z)
	\end{equation}
analogous to~\eqref{eq:gdiffeq1}.

We will call $\lambda$ in $\C$ an eigenvalue of $\mathcal{L}_{a,b}$ if
$\mathcal{L}_{a,b}f=\lambda f$ admits a nonzero solution $f(z)$ holomorphic in
a neighborhood of $0$, and we will call a corresponding solution with $f(0)=1$ its (normalized) $\lambda$-eigenfunction. We denote by $\sigma(\mathcal{L}_{a,b})$ the set of eigenvalues of~$\mathcal{L}_{a,b}$. The coefficients of the expansion $f(z)=\sum_{n\ge0}\alpha_nz^n$ of any eigenfunction $f$ 
are uniquely determined by the recursion
\begin{equation} \label{eq:recursion}
	a(n+1)\alpha_{n+1} = (n(n+1)-\lambda)\alpha_n+b^2\alpha_{n-2}\,,
	\qquad n\ge0\,,
\end{equation}
together with the initial condition $\alpha_0=1$ (we also set $\alpha_{-1}=\alpha_{-2}=0$).
Therefore, $\lambda$ is in $\sigma(\mathcal{L}_{a,b})$ if and only if the sequence
$\alpha_n$ grows at most exponentially. Moreover, a solution that is holomorphic around zero automatically extends to an entire function since $\mathcal{L}_{a,b}$ has no singularities in
$\C^{*}$.

\subsection{Commuting operators and a functional equation}
A key property of the differential operators $\mathcal{L}_{a,b}$ is the following commutation relation.
\begin{lemma} \label{lem:commrel}
	The operators $U_{\pm}=U_{\pm,a,b}$, defined by
		\[U_{\pm}f(z) \coloneqq  z^{-1}e^{\mp i bz -\frac{a}{2z}}f\Big(\pm
\frac{ia}{2bz}\Big)\]
	commute with $\mathcal{L}_{a,b}$.
\end{lemma}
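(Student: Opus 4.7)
The plan is to exploit the gauge transformation already embodied in~\eqref{eq:diffspecialalt}. Writing $E f(z) := e^{a/(2z)} f(z)$, that identity can be rephrased as an intertwining $E\,\mathcal{L}_{a,b} = \widetilde{\mathcal{L}}\, E$ with
\[ \widetilde{\mathcal{L}}(g)(z) := z^2 g''(z) + 2z g'(z) + \Big(b^2 z^2 - \frac{a^2}{4z^2}\Big) g(z). \]
The virtue of $\widetilde{\mathcal{L}}$ is that the $-af'$ term is gone and its potential $b^2 z^2 - a^2/(4z^2)$ is manifestly invariant under $z \mapsto w = \pm ia/(2bz)$, since the two monomials simply swap roles (up to sign that matches the new sign of the substituted variable). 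My strategy is therefore to commute $\widetilde{\mathcal{L}}$ with a cleaner symmetry operator $(V_\pm g)(z) := z^{-1} g(\pm ia/(2bz))$ in the $g$-variable, then transport the commutation back through $E$.

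First, I will verify the identification $U_\pm = E^{-1} V_\pm E$. This is a direct unwinding of $e^{-a/(2z)} V_\pm (e^{a/(2z)} f)$: at $w = \pm ia/(2bz)$ one has $a/(2w) = \mp ibz$, so $e^{a/(2w)} = e^{\mp ibz}$, and the three exponential factors combine with the $z^{-1}$ coming from $V_\pm$ to reproduce exactly the prescription defining $U_\pm f$.

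The substantive step is to verify $V_\pm \widetilde{\mathcal{L}} = \widetilde{\mathcal{L}} V_\pm$ by a chain-rule computation. Writing $h(z) = z^{-1} g(w)$ with $w = \alpha/z$ and $\alpha = \pm ia/(2b)$, I plan to show
\[ z^2 h''(z) + 2 z h'(z) = z^{-1}\bigl[w^2 g''(w) + 2 w g'(w)\bigr], \]
so that together with the invariance $b^2 z^2 - a^2/(4z^2) = b^2 w^2 - a^2/(4w^2)$, one obtains $\widetilde{\mathcal{L}}(h)(z) = z^{-1}\,\widetilde{\mathcal{L}}(g)(w) = V_\pm \widetilde{\mathcal{L}}(g)(z)$. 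The main obstacle is this chain-rule identity: although $z^2\partial_z^2 + 2z\partial_z$ is not equidimensional (it has two terms of different weights), under the substitution $z\mapsto \alpha/z$ twisted by the factor $z^{-1}$ it nevertheless transforms cleanly, the $2z\partial_z$ summand exactly cancelling the unwanted $\alpha^{-1} w g'(w)$ correction that the purely equidimensional piece $z^2\partial_z^2$ would otherwise produce. This coincidence is the whole engine behind the lemma.

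With those two facts in place, the conclusion is a one-line diagram chase:
\[ \mathcal{L}_{a,b}\, U_\pm = E^{-1} \widetilde{\mathcal{L}}\, V_\pm\, E = E^{-1} V_\pm\, \widetilde{\mathcal{L}}\, E = U_\pm\, \mathcal{L}_{a,b}. \]
A direct verification, plugging $U_\pm f$ into $\mathcal{L}_{a,b}$ and comparing with $U_\pm \mathcal{L}_{a,b} f$, would also close the lemma, but it would obscure the origin of the symmetry. The gauge viewpoint makes transparent why one gets two commuting operators paired with the two signs, and thus why $U_+$ and $U_-$ generate a Klein four-group action alongside $\mathcal{L}_{a,b}$, as advertised in the introduction.
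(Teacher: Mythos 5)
Your proposal is correct and follows essentially the same route as the paper: both pass to the gauge-transformed operator $\widetilde{\mathcal{L}} = z^2\partial_z^2 + 2z\partial_z + (b^2z^2 - a^2/(4z^2))$ via the conjugation by $e^{a/(2z)}$ and then verify that $\widetilde{\mathcal{L}}$ commutes with $g \mapsto z^{-1}g(\pm ia/(2bz))$. The only cosmetic difference is that the paper factors this operator as a rescaling composed with $g\mapsto z^{-1}g(1/z)$ and checks commutation with each factor, while you perform the chain-rule computation directly; the underlying algebra is identical.
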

\begin{proof}
	In terms of $g(z)\coloneqq e^{\frac{a}{2z}}f(z)$, the claim is equivalent to the assertion that
	\[g\mapsto  z^2g''+2zg'+\Big(b^2z^2-\frac{a^2}{4z^2}\Big)g\]
	commutes with $g\mapsto z^{-1}g(\pm\frac{ia}{2bz})$. It is easy to check that $g\mapsto z^2g''+2zg'$ commutes with any rescaling $R_k$ as well as with $g\mapsto z^{-1}g(\frac1z)$, so it commutes with $g\mapsto z^{-1}g(\pm\frac{ia}{2bz})$. On the other hand, the mapping $g\mapsto (b^2z^2-\frac{a^2}{4z^2})g$ commutes with $g\mapsto z^{-1}g(\frac{k}z)$ if and only if $a^2+4k^2b^2=0$. Combining these facts, we obtain the desired conclusion.
\end{proof}

\begin{lemma} \label{lem:feqgeneral}
	Any nonzero eigenfunction $f$ of $\mathcal{L}_{a,b}$ satisfies the functional equation
	\begin{equation} \label{eq:feqgeneral}
	ze^{\frac{a}{2z}}f(z) = \kappa_{+}e^{-i bz}f\Big(\frac{ia}{2bz}\Big) + \kappa_{-}e^{i bz}f\Big(-\frac{ia}{2bz}\Big)
	\end{equation}
	for some nonzero constants $\kappa_{\pm}$ with $\kappa_{+}^2-\kappa_{-}^2=\frac{ia}{2b}$.
\end{lemma}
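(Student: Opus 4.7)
The approach is to leverage the commutation relation from Lemma~\ref{lem:commrel}: both $U_{\pm}$ preserve the two-dimensional local solution space $V_{\lambda}$ of $(\mathcal{L}_{a,b}-\lambda)g=0$ on any simply-connected subdomain of $\C^{*}$. The plan is to express $f$ in the basis $\{U_+f, U_-f\}$ of $V_{\lambda}$; rewriting the decomposition in the original variables then yields~\eqref{eq:feqgeneral}.

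First I would verify by direct substitution the three algebraic identities
\[U_+^{2}=c\,I,\qquad U_-^{2}=-c\,I,\qquad U_+U_-=-U_-U_+,\]
where $c\coloneqq 2b/(ia)$. Next I would show that $U_+f$ and $U_-f$ are linearly independent in $V_{\lambda}$. Assuming instead $U_-f=\mu U_+f$, applying $U_+$ and $U_-$ separately and using the identities above yields $U_-U_+f=-\mu c\,f$ and also $U_-U_+f=-cf/\mu$, hence $\mu^{2}=1$. The relation $U_-f=\pm U_+f$, after substituting $w=\frac{ia}{2bz}$, simplifies to $f(w)=\pm e^{-a/w}f(-w)$, which is incompatible with $f$ being holomorphic at $0$ with $f(0)=1$: the right-hand side has an essential singularity at the origin while the left-hand side tends to $1$. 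Therefore $\{U_+f,U_-f\}$ forms a basis of $V_{\lambda}$, and there exist unique scalars $\kappa_{\pm}$ such that $f=\kappa_+U_+f+\kappa_-U_-f$; multiplying through by $ze^{a/(2z)}$ converts this into~\eqref{eq:feqgeneral}.

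To extract the algebraic constraint, apply $U_+$ to the decomposition and use $U_+^{2}=cI$ and $U_+U_-=-U_-U_+$ to obtain $U_+f=c\kappa_+f-\kappa_-\,U_-U_+f$; applying $U_-$ gives analogously $U_-f=\kappa_+\,U_-U_+f-c\kappa_-f$. Multiplying these by $\kappa_+$ and $\kappa_-$ respectively and adding eliminates the mixed term $U_-U_+f$, leaving $\kappa_+U_+f+\kappa_-U_-f=c(\kappa_+^{2}-\kappa_-^{2})f$; comparing with the original decomposition of $f$ forces $c(\kappa_+^{2}-\kappa_-^{2})=1$, i.e., $\kappa_+^{2}-\kappa_-^{2}=ia/(2b)$.

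The hard part is showing $\kappa_{\pm}\neq 0$. If $\kappa_{+}=0$, then $f=\kappa_-U_-f$; substituting $g(z)=e^{a/(2z)}f(z)$ reduces this to the compact relation $g\bigl(-\tfrac{ia}{2bz}\bigr)=z\,g(z)/\kappa_-$. I would derive a contradiction via asymptotic analysis at infinity: WKB analysis of $\mathcal{L}_{a,b}f=\lambda f$ produces two formal solutions behaving like $z^{-1}e^{\pm ibz}(1+O(1/z))$ as $|z|\to\infty$, and a nonzero entire eigenfunction is generically a combination of both modes with nonvanishing coefficients in appropriate Stokes sectors. Evaluating the fixed-point identity along the imaginary axis and sending $y\to\pm\infty$ should force both asymptotic coefficients to vanish, contradicting $f\not\equiv 0$. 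The case $\kappa_{-}=0$ is handled symmetrically. I expect this Stokes-sector comparison to be the most delicate step, as it requires careful control of the irregular singularity of $\mathcal{L}_{a,b}$ at $\infty$ for each eigenfunction in $\sigma(\mathcal{L}_{a,b})$.
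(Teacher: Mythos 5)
Your structural framing via the operator identities $U_+^2 = cI$, $U_-^2 = -cI$, $U_+U_- = -U_-U_+$ with $c = 2b/(ia)$ is correct (I checked them by direct substitution), and it yields a cleaner, more conceptual route both to the linear independence of $U_+f,U_-f$ and to the constraint $\kappa_+^2-\kappa_-^2 = ia/(2b)$ than the paper's argument. The paper instead passes to $g(z)=e^{a/(2z)}f(z)$ and compares Laurent coefficients on both sides of $zg(z)=\kappa_+ g(ia/(2bz))+\kappa_- g(-ia/(2bz))$, obtaining the relations $a_{n-1}=a_{-n}(ia/(2b))^{-n}(\kappa_++(-1)^n\kappa_-)$ and then pairing $n$ with $-(n-1)$. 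Your operator calculus packages the same information, and your forced $\mu^2=1$ contradiction (via the essential singularity of $e^{-a/w}f(-w)$ at $w=0$) is a clean way to make precise the paper's unelaborated remark that $U_+f$ and $U_-f$ ``cannot be proportional.''

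The genuine gap is exactly at the step you flag as delicate: proving $\kappa_\pm\ne0$. A sketch that ``WKB analysis ... should force both asymptotic coefficients to vanish'' is not a proof; the word ``generically'' is doing unearned work, and controlling Stokes data of an entire solution of an ODE with an irregular singularity at $\infty$ is a nontrivial undertaking that you have not carried out. Worse, the detour is unnecessary, because there is an elementary argument requiring no asymptotic information about $f$ at infinity beyond holomorphy at $0$. If $\kappa_-=0$, the decomposition becomes
\[
e^{ibz}f(z) = \kappa_+\, z^{-1}e^{-a/(2z)}f\Big(\frac{ia}{2bz}\Big),
\]
and as $|z|\to\infty$ in any direction the right-hand side tends to $0$, since $f(ia/(2bz))\to f(0)$ and $e^{-a/(2z)}\to 1$ while $z^{-1}\to 0$; the left-hand side is entire and not identically zero, so Liouville's theorem gives a contradiction. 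The case $\kappa_+=0$ is symmetric. This two-line argument is what the paper uses, and it should replace your entire Stokes-sector discussion.
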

\begin{proof}
	The three functions $f$, $U_{+}f$, and $U_{-}f$ are nonzero solutions of a second order differential equation and thus linearly dependent. It is easy to see that $U_{+}f$ and $U_{-}f$ cannot be proportional, so the relation can be written in the form~\eqref{eq:feqgeneral} for some constants $\kappa_{\pm}$. Next, if we had $\kappa_{-}=0$, then we would have
		\[e^{i bz}f(z) = \kappa_{+}z^{-1}e^{-\frac{a}{2z}}f\Big(\frac{ia}{2bz}\Big)\,,\]
	and this would imply that $z\mapsto e^{ibz}f(z)$ is nonzero, entire, and goes to $0$ as $|z|\to\infty$, in conflict with Liouville's theorem. So $\kappa_{-}\ne0$, and similarly $\kappa_{+}\ne0$ by the same argument.

	Writing $g(z)\coloneqq e^{\frac{a}{2z}}f(z)$, we see that the equation takes the form
	\[zg(z) = \kappa_{+}g\Big(\frac{ia}{2bz}\Big) + \kappa_{-}g\Big(-\frac{ia}{2bz}\Big).\]
	Considering Laurent expansions of both sides, we get for all $n$ in $\Z$
	\[ a_{n-1} = a_{-n}\Big(\frac{ia}{2b}\Big)^{-n}(\kappa_{+}+\kappa_{-}(-1)^n),\]
	where $g(z)=\sum_{n\in \Z}a_nz^n$. Comparing this with the equation for $n\mapsto-(n-1)$ and using the fact that not all coefficients $a_n$ vanish, we get
	\begin{equation} \label{eq:involution} 
	\kappa_{+}^2-\kappa_{-}^2=\frac{ia}{2b}\,.\qedhere
	\end{equation}
\end{proof}

\subsection{Spectrum of \texorpdfstring{$\mathcal{L}_{a,b}$}{L_{a,b}}}
\label{sec:spectrum}
Up to this point we have only seen one eigenfunction of $\mathcal{L}_{a,b}$, namely $\Phi(z)$  for $a=\frac{1}{2\Cone}$, $b=\frac{\pi}2$. However, for any choice of $a,b\in\C^{*}$ the spectrum of $\mathcal{L}_{a,b}$ is infinite, and so there are plenty of eigenfunctions to which the above discussion applies. In fact, standard arguments may be employed to show that for suitable choices of the parameters $a,b$, these eigenfunctions constitute a Riesz basis for the Paley--Wiener space $PW^2_b$, i.e., the subspace of $L^2(\mathbb R)$ consisting of entire functions of exponential type at most $b$.

We consider $\mathcal{L}_{a,b}$ as an unbounded operator acting on $PW^2_b$. Since $\mathcal{L}_{a,b}$ is conjugate to $\mathcal{L}_{ab,1}$, we may assume that $b=1$. Let us first look at the differential operator
\[ L_0 g(\xi)\coloneqq \frac{d}{d\xi}(\xi^2-1)g'(\xi) \]
on $[-1,1]$ which has eigenvalues $ n(n+1)$ with the Legendre polynomials $P_n$ the associated eigenfunctions . We are interested in the perturbation
	\[ L_a g(\xi)\coloneqq L_0g(\xi)-ia\xi g(\xi),\]
which satisfies\footnote{We remark that $L_{a}\widehat{f}=\lambda \widehat{f}$ is a special case of the confluent Heun equation~\cite[Sec. 31.12]{NIST:DLMF}} $L_a\widehat{f} = \widehat{\mathcal{L}_{a,1}f}$.
We note that by Bonnet's recursion formula,
\[ L_a P_n =n(n+1) P_n-ia\big(\frac{n+1}{2n+1} P_{n+1}+\frac{n}{2n+1}P_{n-1}\big). \]
This means that if  $g=\sum_{n=0}^\infty i^n\xi_n P_n$ and the $\xi_n$ decay sufficiently fast, then
\[ L_a g= \sum_{n=0}^\infty n(n+1) i^n\xi_n P_n - a \sum_{n=0}^\infty i^n \big(\frac{n}{2n-1}\xi_{n-1}-\frac{n+1}{2n+3}\xi_{n+1}\big) P_n , \]
where we use the convention that $\xi_{-1}=0$. We now wish to find out for which $\lambda$ the system of equations
	\begin{equation} \label{eq:tridiag}
	\lambda \xi_n = n(n+1) \xi_n - a \big(\frac{n}{2n-1}\xi_{n-1}-\frac{n+1}{2n+3}\xi_{n+1}\big) 
	\end{equation}
has a solution that decays for $n\to\infty$. This means that the eigenvalues of our differential equation are the eigenvalues of the infinite tridiagonal matrix $T$ with entries that on the $m$th row are
	\begin{equation} \label{eq:entries} 
		-a \frac{m}{2m-1}, \quad m(m+1), \quad  a \frac{m+1}{2m+3}.
	\end{equation}

\begin{theorem}\label{thm:spectrum}
	For any $a,b$ in $\C^{*}$ the spectrum of $\mathcal{L}_{a,b}$ is an infinite discrete closed set. Moreover, all sufficiently large eigenvalues (depending on~$a$ and~$b$) can be labeled as $\{\lambda_n\}_{n\ge N}$ with $\lambda_n=n(n+1)+O(1/n)$ as $n\to\infty$.
\end{theorem}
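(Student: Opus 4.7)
The plan is to Fourier-transform the problem and apply spectral perturbation theory. By the reduction preceding the theorem, I may take $b=1$ and work with the operator $L_a=L_0-ia\xi$ on $L^2[-1,1]$, where $L_0=\frac{d}{d\xi}(\xi^2-1)\frac{d}{d\xi}$ is the Legendre operator. The first claim is immediate from general principles: $L_0$ is self-adjoint with compact resolvent and simple discrete spectrum $\{n(n+1):n\ge0\}$, while multiplication by $-ia\xi$ is bounded with norm $\le|a|$. Hence $L_a$ also has compact resolvent, and its spectrum is a discrete closed subset of $\C$ accumulating only at infinity.

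For the asymptotic statement, I would pass to Riesz projections on the circle $\gamma_n=\{\lambda:|\lambda-n(n+1)|=n\}$. On this contour the gap to every diagonal value $m(m+1)$ is at least $n$, so $\|(L_0-\lambda)^{-1}\|\le 1/n$. For $n>2|a|$ a convergent Neumann expansion yields $\|(L_a-\lambda)^{-1}\|\le 2/n$ on $\gamma_n$. Applying the resolvent identity
\[
(L_a-\lambda)^{-1}-(L_0-\lambda)^{-1}=(L_0-\lambda)^{-1}(ia\xi)(L_a-\lambda)^{-1}
\]
and integrating over $\gamma_n$, I would conclude that the Riesz projections $\Pi_n(a)$ and $\Pi_n$ of $L_a$ and $L_0$ associated to $\gamma_n$ satisfy $\|\Pi_n(a)-\Pi_n\|=O(|a|/n)$. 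For $n$ large this forces $\operatorname{rank}\Pi_n(a)=1$, producing a unique eigenvalue $\lambda_n$ of $L_a$ inside $\gamma_n$; in particular, the spectrum is infinite.

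To refine to $\lambda_n=n(n+1)+O(1/n)$, I would pick a normalized eigenfunction $\phi_n$ associated to $\lambda_n$ with $\|\phi_n-\widetilde P_n\|_{L^2}=O(1/n)$ for $\widetilde P_n=P_n/\|P_n\|$, which follows from the projection estimate above. Pairing $L_a\phi_n=\lambda_n\phi_n$ with $\widetilde P_n$, using self-adjointness of $L_0$ and $\xi$, and applying Bonnet's recursion $\xi P_n=\tfrac{n+1}{2n+1}P_{n+1}+\tfrac{n}{2n+1}P_{n-1}$, I get
\[
(\lambda_n-n(n+1))\langle\phi_n,\widetilde P_n\rangle = -ia\bigl(c_n^{+}\langle\phi_n,\widetilde P_{n+1}\rangle+c_n^{-}\langle\phi_n,\widetilde P_{n-1}\rangle\bigr)
\]
with constants $c_n^{\pm}$ bounded uniformly in $n$. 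Since $\langle\phi_n,\widetilde P_n\rangle=1+O(1/n)$ and $\langle\phi_n,\widetilde P_{n\pm1}\rangle=O(1/n)$, the estimate $\lambda_n=n(n+1)+O(1/n)$ follows at once.

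The principal technical hurdle is the Neumann-series bound $\|(L_a-\lambda)^{-1}\|\le 2/n$ on $\gamma_n$: it depends on the spectral gap of $L_0$ near $n(n+1)$ (of order $n$) dominating the bounded perturbation $\|{-ia\xi}\|\le|a|$, which is precisely why the asymptotic is restricted to sufficiently large $n$. Once this resolvent estimate is in place, the remaining steps are standard applications of spectral perturbation theory.
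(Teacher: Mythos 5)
Your argument is correct and reaches the conclusion, but it takes a genuinely different route from the paper. Both proofs pass to the Fourier side and work with the Legendre-type operator $L_a=L_0-ia\xi$; after that they diverge. The paper's proof is discrete and matrix-theoretic: it expresses $L_a$ in the basis of Legendre polynomials, obtaining the infinite tridiagonal matrix $T$, truncates to $T_N$, localizes eigenvalues with Gershgorin's circle theorem, derives two-sided super-exponential decay estimates for the eigenvector coordinates, and then passes to the limit $N\to\infty$ by a compactness argument. Your proof instead stays at the operator level: $L_0$ self-adjoint with compact resolvent, spectral gap $\asymp n$ at $n(n+1)$, Neumann series resolvent bound on the circle $\gamma_n$, Riesz projection comparison $\|\Pi_n(a)-\Pi_n\|=O(|a|/n)$ forcing rank one, and finally the pairing with $\widetilde P_n$ together with Bonnet's recursion to get $\lambda_n=n(n+1)+O(1/n)$. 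What each buys: yours handles general $a$ in one uniform sweep (the paper explicitly treats only $|a|<1$ and asserts the general case is similar), and is arguably cleaner as an application of standard Kato-type perturbation theory; the paper's approach yields sharper by-products that it needs downstream---explicit super-exponential decay of the Legendre coefficients (used to conclude $\widehat f\in C^\infty[-1,1]$ and later for exponential convergence of the truncated numerics in Lemma~\ref{lem:specimproved}), which does not fall out of the Riesz-projection argument for free. Both proofs share the same implicit bridge---that the paper's $\sigma(\mathcal L_{a,1})$, defined via holomorphic solutions near the origin, coincides with the $L^2[-1,1]$ spectrum of $L_a$; neither spells this out in full, so you should not be penalized for inheriting that gap, though it would be worth a remark that the correspondence rests on entire eigenfunctions necessarily lying in $PW^2_1$.
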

\begin{proof}
Without loss of generality we set $b=1$. We will prove that when $|a| < 1$, the eigenvalues $\lambda_n$ are simple and
\[ |\lambda_n-n(n+1)| \le \frac{1}{n}  \]
for $n\ge 1$.
We require $|a| < 1$ for convenience, as it entails that all the Gershgorin discs have radius $< 1$, and so they are disjoint. The analysis is similar for an arbitrary $a$, but would not  apply to a finite number of the Gershgorin discs, namely those corresponding to $n(n+1)$ with $n\le |a|$.

We fix $n$ and consider the finite $(N+1)\times (N+1)$ tridiagonal matrix $T_N$ with the three entries on its $m$th row as in \eqref{eq:entries} and assume that $n<N$. By Gershgorin's theorem, $T_N$  has precisely one eigenvalue $\lambda_n^{(N)}$ in the disc centered at $n(n+1)$ of radius $a (\frac{n}{2n-1}+\frac{n+1}{2n+3})$. The corresponding eigenvector $(x_0^{(N)},\ldots , x_N^{(N)})$ must satisfy $|x_n^{(N)}|\ge |x_j^{(N)}|$ for $j=0,\dots,N$ since otherwise $\lambda_n$ would lie in another Gershgorin disc. We assume in the sequel that $x_n^{(N)}=1$.

Since the radii of the Gershgorin discs are $\le 1$, we see by considering the $(n-1)$-th and $(n+1)$-th rows of the relation $T_Nx^{(N)}=\lambda_n^{(N)} x^{(N)}$ that
\[ |x_{n\pm 1}^{(N)}|\le \frac{1}{n}, \]
and we may therefore conclude that
\[ |\lambda_n^{(N)}-n(n+1)|\le \frac{1}{n} , \quad n\ge 1, \]
by again applying Gershgorin's argument. By applying \eqref{eq:tridiag} inductively, we now get that
\begin{equation} \label{eq:fastdecay} \frac{|x_{n+2k+i}^{(N)}|}{\max(|x_{n+2k}^{(N)}|, |x_{n+2k-1}^{(N)}|)} \le \frac{1}{(4k+2)n+(2k+1)^2}, \quad k\ge 0, \ i=1,2. \end{equation}
Similarly we get
\[ \frac{|x_{n-2k-i}^{(N)}|}{\max(|x_{n-2k}^{(N)}|, |x_{n-2k+1}^{(N)}|)} \le \frac{1}{(4k+2)n-2k(2k-1)-1}, \quad 2k+i \le n,  \ i=1,2. \]
These uniform bounds allow us to apply a compactness argument to conclude that there exists a sequence $N_j$ such that
$\lambda_n^{(N_j)}$ converges and also that $x_m^{(N_j)}$ converges for every $m$. This means that we have identified an eigenvalue and an eigenvector of the infinite matrix~$T$.

Note that by \eqref{eq:fastdecay}, the coefficients of the eigenfunction decay super-exponentially, so $L_a$ does indeed map every eigenfunction into $L^2(-1,1)$, and even $C^{\infty}[-1,1]$.
\end{proof}

For our numerical computations to be described in~\S~\ref{sec:numerics}, we will need the following additional information.
\begin{lemma} \label{lem:specimproved}
For $0<a<3/2$, the spectrum of $\mathcal{L}_{a,1}$ is real and simple. 
Moreover, for any $3/2>\varepsilon>0$ the eigenvalues and eigenvectors depend continuosly on $a\in[0,3/2-\varepsilon]$ and are exponentially well approximated by the eigenvalues and eigenvectors of the truncated system $T_N$ as $N\to \infty$.
\end{lemma}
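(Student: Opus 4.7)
The plan is to work with the tridiagonal matrix $T = T(a)$ from Theorem~\ref{thm:spectrum}, viewed as an analytic family of operators on $\ell^2(\N)$, and to promote the information already established there to the sharper conclusions claimed here; the assertions for $\mathcal{L}_{a,1}$ then follow via the Fourier--Legendre isomorphism $L_a\widehat{f} = \widehat{\mathcal{L}_{a,1}f}$.

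\textbf{Reality and simplicity.} I begin with the parity involution $J:g(\xi)\mapsto g(-\xi)$, for which a direct computation yields $JL_aJ = L_a^*$; hence $\sigma(\mathcal{L}_{a,1})$ is invariant under complex conjugation. At $a = 0$ the spectrum is $\{n(n+1)\}_{n\ge0}$, real, isolated and simple. By analytic perturbation theory the eigenvalues move real-analytically in $a$ and can leave the real line only by first colliding with another real eigenvalue; geometric simplicity is free from the recursion~\eqref{eq:recursion}, and once the eigenvalues are distinct, algebraic simplicity follows automatically. Thus the whole reality/simplicity claim reduces to precluding collisions for $a \in (0, 3/2)$. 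To do this I split
\[T = \begin{pmatrix} A & B \\ C & D \end{pmatrix},\qquad A = \begin{pmatrix} 0 & a/3 \\ -a & 2 \end{pmatrix},\]
with $D$ the tail on indices $\ge 2$. The eigenvalues of $A$ are $1 \pm \sqrt{1 - a^2/3} \in [1/2, 3/2]$ for $a \in (0, 3/2]$; meanwhile the Gershgorin discs of $D$ around $n(n+1)$, $n \ge 2$, remain pairwise disjoint and disjoint from $[1/2, 3/2]$ throughout this range. The coupling blocks $B$ and $C$ each contain a single nonzero entry of modulus $O(a)$, so a quantitative Schur-complement argument confines every eigenvalue of $T$ to a small neighbourhood of $\sigma(A) \cup \sigma(D)$. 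These neighbourhoods are disjoint for $a \in (0, 3/2)$, which rules out collisions.

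\textbf{Continuity and truncation approximation.} Continuity of the simple, isolated eigenvalues and their normalized eigenvectors on $[0, 3/2 - \varepsilon]$ is then immediate from Kato's analytic perturbation theory applied to the analytic family $T(a)$. For the approximation by $T_N$, iterated application of the contraction estimate~\eqref{eq:fastdecay}, whose proof extends verbatim to $a \in [0, 3/2 - \varepsilon]$, produces super-exponential decay of the coefficients of any eigenvector $x$ of $T$. Writing $x^{(N)}$ for the truncation to the first $N+1$ coordinates, the residual $r^{(N)} := T_N x^{(N)} - \lambda_n x^{(N)}$ is supported in its last coordinate with $\|r^{(N)}\| = O(\max(|x_N|,|x_{N+1}|))$, and is therefore super-exponentially small in $N$. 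Combined with the uniform positive gap between $\lambda_n$ and the rest of $\sigma(T_N)$ for $N$ large (which comes from the separation established above), standard simple-eigenvalue perturbation estimates yield
\[|\lambda_n^{(N)} - \lambda_n| + \|x_n^{(N)} - x\| = O(\|r^{(N)}\|),\]
which is exponentially (in fact super-exponentially) small. Uniformity in $a \in [0, 3/2 - \varepsilon]$ follows from the continuous dependence of all constants on $a$.

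\textbf{Main obstacle.} The delicate step I expect to dominate the work is the non-collision argument for $a$ close to $3/2$: the plain Gershgorin analysis of Theorem~\ref{thm:spectrum} only reaches $a < 15/13$, so the extension to $a < 3/2$ genuinely requires the separate treatment of the $2 \times 2$ top block $A$ together with a tight Schur-complement bound on the coupling to the tail; the bound $3/2$ is exactly the value at which the eigenvalue interval $[1/2, 3/2]$ of $A$ still sits comfortably clear of the Gershgorin interval of $D$ around $6$, providing the needed spectral margin.
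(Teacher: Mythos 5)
Your argument for reality and simplicity diverges from the paper's and has a genuine gap where it matters. The paper's key trick is a positivity argument: for $a>0$ and a real eigenvalue of $T_N$, solving the recursion inward from both ends shows the eigenvector has entries of constant sign, and this upgrades Gershgorin's discs to one-sided intervals $[k(k+1)-\tfrac{k}{2k-1}a,\,k(k+1)+\tfrac{k+1}{2k+3}a]$; disjointness of consecutive intervals (tightest at $k=0$ versus $k=1$) is \emph{exactly} the condition $a<3/2$, and a homotopy from $a=0$ finishes. Your route (conjugate symmetry $JL_aJ=L_a^*$, analytic perturbation from $a=0$, a $2\times2$ top block plus tail) is reasonable in outline, but you never carry out the Schur-complement estimate that would actually forbid collisions, and the justification you give for why $3/2$ is the threshold is wrong on two counts. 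First, the eigenvalues of $A=\begin{pmatrix}0&a/3\\-a&2\end{pmatrix}$, namely $1\pm\sqrt{1-a^2/3}$, are \emph{not} confined to $[1/2,3/2]$ for $a\in(0,3/2]$; they lie in $(0,1/2]\cup[3/2,2)$ and only touch the endpoints of $[1/2,3/2]$ when $a=3/2$. Second, these $A$-eigenvalues remain well below the first Gershgorin disc of $D$ (around $6$) for $a$ considerably larger than $3/2$, so "keeping clear of $D$" cannot be what pins down the bound. In the paper, $3/2$ comes from the $k=0$/$k=1$ pair; in your block scheme that pair is precisely the $2\times2$ block $A$, and the dangerous collision is between the two $A$-eigenvalues themselves as modified by the (not small, since $a\sim3/2$) coupling to $D$. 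You flag this as "the delicate step" but do not do it, and the numerology you offer in its place is incorrect. As it stands, this is a missing step, not a proved one.

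The second half of your proposal (continuity in $a$ and exponential accuracy of the truncations $T_N$) is essentially sound and close in spirit to the paper: both exploit the super-exponential decay of the eigenvector coefficients so that the truncation residual is exponentially small, and both use eigenvalue separation to convert a small residual into small eigenvalue/eigenvector errors. The paper does this via the Bauer--Fike theorem together with polynomial bounds on the condition number of the eigenvector matrix of $T_N$; you instead use a gap-plus-residual estimate, which works equally well for a simple isolated eigenvalue. One small detail you should tighten: you invoke "the uniform positive gap between $\lambda_n$ and the rest of $\sigma(T_N)$" and point to "the separation established above," but that separation was argued for the infinite operator, not for $T_N$ uniformly in $N$; you need the $T_N$-level separation (as the paper obtains from its uniform Gershgorin estimates) to make the residual argument close.
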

\begin{proof}
	We first note that if $a>0$ and $\lambda$ is a real eigenvalue of the matrix $T_N$, then $\lambda>0$ and the eigenvector satisfies $\xi_n>0$, $n=0,\dots,N$. We see this by solving the equations~\eqref{eq:tridiag} from two sides: On one hand, if $\lambda\le k(k+1)$, then we show inductively that $\xi_N, \xi_{N-1},\dots,\xi_{k-1}$ have the same sign, and on the other hand, if $\lambda\ge \ell(\ell+1)$, then $\xi_0,\dots,\xi_{\ell+1}$ have the same sign. Thus taking $\ell=k-1$ for some $k$, we find  that the sign of $\xi_n$ is constant. Since $\xi_n>0$, Gershgorin's argument can be improved to two-sided estimates
		\[-\frac{k}{2k-1}a \le \lambda_k - k(k+1) \le \frac{k+1}{2k+3}a\]
	when $a$ is sufficiently small, since in that case $\lambda_k$ is guaranteed 
to be real. Finally, for $a<3/2$, the above intervals do not intersect, so a 
homotopy argument shows that the spectrum remains real and simple.
	
We will now see that the eigenvectors and eigenvalues of the truncated
matrix $T_N$ converge exponentially as $N\to\infty$. Take an eigenvector $v$ of 
$T$ and its corresponding eigenvalue $\lambda$. Consider 
the vector $v_N$ in $\R^{N+1}$ which has the same first $N+1$ coordinates as~$v$. We 
have $T_N v_N = \lambda v_N+ r_N$, where $r_N$ is an error term such that 
$\|r_N\|\le c^{-N}$ for some $c>1$ because the coefficients of $v$ decay super-exponentially. 
By the Bauer--Fike theorem (see \cite{BF60}), there is an eigenvalue $\lambda_k$ of 
$T_N$ such that 
\[
 |\lambda_k - \lambda| \le \kappa(V_N) \frac{\|r_N\|}{\|v_N\|}, 
\]
where $V_N$ is the matrix of eigenvectors of $T_N$ and $\kappa(V_N) \coloneqq  \|V_N\| 
\|V_N^{-1}\|$. The norm of the matrix~$V_N$ grows polynomially in $N$ 
(uniformly for $a$ in $[0,3/2-\varepsilon]$ and $\varepsilon>0$), and similarly the norm of $V_N^{-1}$ grows polynomially, which can be obtained 
from the left eigenvectors of $T_N$. Thus altogether we have exponential 
convergence (uniformly in $a$) of the eigenvalue $\mu_N(a)$ 
towards $\lambda(a)$, and therefore also continuous dependence on $a$.

As for the corresponding eigenvector, we will use that by the Gershgorin 
bounds obtained above, the eigenvalues of $T_N$ are well separated, i.e.,  
there is a $\delta>0$, independent of $N$, such that $|\lambda-\mu|\ge \delta$ 
for any two different eigenvalues of $T_N$ and any $a$ in $[0,3/2-\varepsilon]$.

To approximate eigenvectors, we make the expansion $v_N = \sum_{j} c_j u_j$, where $u_j$ are the eigenvectors of $T_N$ and get thus $T_N v_N=\sum_j c_j  \lambda_j u_j$. We may assume that $\lambda$ is close to some $\lambda_k$. Since $T_N v_N = \lambda 
v_N + r_N$, we have $\sum c_j(\lambda_j -\lambda) u_j = r_N$. For $k\ne j$ we have
that $|c_j| = \frac{|\langle r_N, w_j\rangle|}{|\lambda_j-\lambda|}$, where 
$w_j$ are the biorthogonal vectors associated to the right eigenvectors, i.e. 
the left eigenvectors of $T_N$. Finally, $|c_j|$ decays (uniformly) exponentially in $N$ for 
$k\ne j$.
\end{proof}

We close this section by recording a puzzling property of the tridiagonal matrix appearing in the proof of Theorem~\ref{thm:spectrum}. Arguing as in the proof of Corollary~\ref{cor:thirdorder}, we see that for a $\lambda$-eigenfunction $f$ of $\mathcal{L}_{a,b}$, the function $F(z)=f(z)f(-z)$ satisfies the third order differential equation
	\[(z^2F)''' +
	\Big(4b^2z^2-4\lambda-\frac{a^2}{z^2}\Big)F' +
	\Big(8b^2z-\frac{4\lambda}{z}\Big)F= 0\,.\]
Writing $F(z)=\sum_{n\ge0}u_nz^{2n}$, we get the recursion
	\[u_{n}(2n+2)(2n+1)(2n)+8nb^2u_{n-1}-a^2(2n+2)u_{n+1}=4\lambda u_{n}(1+2n),\]
which can be rewritten as
	\[\lambda u_n = n(n+1)u_n+2b^2\frac{n}{2n+1}u_{n-1}-\frac{a^2}{2}\frac{n+1}{2n+1}u_{n+1}.\]
Surprisingly, we recognize on the right-hand side the transpose of the tridiagonal matrix~$T$. In particular, setting $b=1$ and $u_{n}=\frac{\xi_n(-2/a)^n}{2n+1}$, we get 
	\[\lambda \xi_{n} = n(n+1)\xi_{n}-a\big(\frac{n}{2n-1}\xi_{n-1}-\frac{n+1}{2n+3}\xi_{n+1}\big),\]
which is exactly~\eqref{eq:tridiag}. Thus we see that $\widehat{f}(\xi)=c\sum_{n\ge0}(2n+1)(-ia/2)^n u_nP_n(\xi)$, $\xi\in(-1,1)$, for some constant $c$ (in fact, $c=\pi^{-1}$, since $\int_{-1}^{1}P_n(\xi)d\xi=2\delta_{n,0}$).

\section{The functional equation of Theorem~\ref{thm:main}}\label{sec:functionaleq}
It remains to establish the second part of Theorem~\ref{thm:main}, namely the claim that the solutions of the functional equation \eqref{eq:functional} are complex scalars times  $\Phi$. We split the proof into two parts. 

\begin{proof}[Proof that $\Phi$ satisfies \eqref{eq:functional}] By~\eqref{eq:diffbasic} that we already proved in~\S\ref{sec:ode}, we see that $\Phi$ is an eigenfunction of $\mathcal{L}_{a,b}$ for $a=\frac{1}{2\Cone}$, $b=\frac{\pi}{2}$, with eigenvalue $\lambda=-\frac{L_{\tau}(1)}{2\Cone}$. Therefore $\Phi$ satisfies~\eqref{eq:feqgeneral} for some values $\kappa_{+}$ and $\kappa_{-}$. Since $\Phi$ is real-valued on $\R$, we see that $\kappa_{-}=\overline{\kappa_{+}}$ by taking complex conjugation on both sides of~\eqref{eq:feqgeneral}. Setting $z=\tau_{2n-1}$, we find that
	$\kappa_{+}e^{-i\pi\tau_{2n-1}/2}+\kappa_{-}e^{i\pi\tau_{2n-1}/2}\to 0$ as $n\to\infty$,
	and since $\tau_{2n-1}=2n-1/2+o(1)$, this is equivalent to
	\[\kappa_{+}e^{i\pi/4}+\kappa_{-}e^{-i\pi/4} = 0.\]
	Next, since $\Phi(0)=1$ and positive zeros of $\Phi$ are $\{\tau_{2n-1}\}_{n\ge1}$, the number $\Phi(\tau_{2n})$ has sign $(-1)^n$ for large $n$. Setting $z=\tau_{2n}$ in~\eqref{eq:feqgeneral} and taking limits, we get
	\[\kappa_{+}e^{-i\pi/4}+\kappa_{-}e^{i\pi/4} > 0,\]
	so that $\kappa_{\pm}=e^{\pm i\pi/4}\kappa$ for some $\kappa>0$.  Plugging these values into~\eqref{eq:involution}, we get $\kappa^2=\frac{a}{4b}$, and so $\kappa=\frac{1}{\sqrt{4\pi \Cone}}$. With these computations, we obtain precisely~\eqref{eq:functional} by dividing~\eqref{eq:feqgeneral} by $z$. \end{proof}

\begin{proof}[Proof that all solutions to \eqref{eq:functional} are  constant multiples of $\Phi$] We assume to the contrary that $F$ is a nontrivial entire function satisfying
\[ F(z) e^{\frac{1}{4\mathscr C z}}= \frac{e^{i\frac{\pi}{2}(z-\frac12)} F\big(\frac{1}{2\pi i \mathscr{C} z}\big)+e^{-i\frac{\pi}{2}(z-\frac12)} F\big(-\frac{1}{2\pi i \mathscr{C} z}\big)}{2\sqrt{\pi \mathscr{C}}z} ,\qquad z\in \C^\ast,\]
with $F$ not being a multiple of $\Phi$. We will show that this assumption leads to a contradiction.

We may assume that $F$ is a real entire function satisfying $F(0)=0$. This follows from the observation that also $F-F(0)\Phi$ and $F+F^\ast$ (with $F^\ast(z)\coloneqq \overline{F(\overline{z})}$) satisfy the functional equation. 
We start from the fact that the function
\[ G_{\varepsilon}(z)\coloneqq \Phi(z)+\varepsilon F(z)  \]
also satisfies the functional equation for every complex $\varepsilon$. The functional equation entails that $F$ must be of exponential type $\frac{\pi}{2}$ and that
\begin{equation} \label{eq:Fasymp} F(z)= O\Big(\frac{e^{\frac{\pi}{2}|y|}}{|z|^2} \Big), \quad z\to \infty . \end{equation} It follows from Rouch\'{e}'s theorem and \eqref{eq:Fasymp} that we may write
\[ G_{\varepsilon}(z)=(1+\varepsilon F(0)) \prod_{n=1}^\infty \Big(1+(-1)^n\frac{z}{t_n(\varepsilon)}\Big), \]
with $t_n(\varepsilon)\to \tau_n$ when $\varepsilon\to 0$, uniformly in $n$. We will henceforth assume that $\varepsilon$ is real and small enough, so that $(t_n(\varepsilon))$ is a sequence of real numbers. 

Now the functional equation yields the system of equations
\[ e^{i\frac{\pi}{2}(-1)^nd_n(\varepsilon)} F\big((-1)^{n+1} \frac{1}{2\pi i \mathscr{C}t_n(\varepsilon) }\big)-e^{-i\frac{\pi}{2}(-1)^nd_n(\varepsilon)} F\big((-1)^n\frac{1}{2\pi i \mathscr{C} t_n(\varepsilon)}\big)=0, \]
where $d_n(\varepsilon)\coloneqq n+\frac12-t_n(\varepsilon)$.
Hence
\[ i\pi (-1)^n d_n(\varepsilon) = \log F\big((-1)^n\frac{1}{2\pi i \mathscr{C} t_n(\varepsilon)}\big) -  \log F\big((-1)^{n+1} \frac{1}{2\pi i \mathscr{C}t_n(\varepsilon) }\big) . \]
It follows that for $\varepsilon$ small enough, $(d_n(\varepsilon))$ is a fixed point of the mapping $K$ defined by the formula
\begin{align*} (K\xi)_n\coloneqq \frac{(-1)^n}{\pi i} \sum_{m=1}^\infty  \Big(& \log\Big(1+\frac{(-1)^{n+m}}{2\pi i \mathscr{C}(n+\frac12-\xi_n)(m+\frac12-\xi_m)}\Big)\\
	& -\log\Big(1+\frac{(-1)^{n+m+1}}{2\pi i \mathscr{C}(n+\frac12-\xi_n)(m+\frac12-\xi_m)}\Big)\Big).\end{align*}
This means that we have
\begin{equation} \label{eq:fixed} Kd(\varepsilon)-K\delta   =d(\varepsilon)-\delta, \end{equation}
where $\delta_n\coloneqq d_n(0)=n+\frac12-\tau_n$. On the other hand, by the mean value theorem,
\begin{equation} \label{eq:meanvalue} (Kd(\varepsilon))_n-(K\delta)_n=\nabla (K\xi)_n \big|_{\xi=c\delta+(1-c)d(\varepsilon)} \cdot \big(d(\varepsilon)-\delta\big) \end{equation}
for some $0<c<1$, where the dot signifies inner product between real sequences. Since $d(\varepsilon)\neq \delta$ for all small $\varepsilon$, \eqref{eq:meanvalue} will contradict \eqref{eq:fixed} if we can show that the $\ell^1$ norms of the gradients in \eqref{eq:meanvalue} are uniformly $<1$ when $\varepsilon$ is small enough. 

Since $t_n(\varepsilon) \to \tau_n$ uniformly in $n$, it suffices to estimate the gradients at $\varepsilon=0$. We compute and get
\begin{align*} \frac{\partial (K\xi)_n}{\partial \xi_n}\big|_{\xi=\delta} & = (-1)^{n+1} \Bigg( \frac{4 \mathscr{C} \tau_n }{2\pi i \mathscr{C} \tau_n^2+1} - \frac{4 \mathscr{C} \tau_n }{2\pi i \mathscr{C} \tau_n^2-1}\\
	& + \sum_{m\neq n} \Big( \frac{2 \mathscr{C} \tau_m }{2\pi i \mathscr{C} \tau_n\tau_m+(-1)^{n+m}} - \frac{2 \mathscr{C} \tau_m }{2\pi i \mathscr{C} \tau_n\tau_m-(-1)^{n+m}}\Big)\Bigg) \\
	& = 4\mathscr{C}(-1)^{n+1} \Bigg( \frac{(-1)^n \tau_n }{4\pi^2 \mathscr{C}^2 \tau_n^4+1}
	+ \sum_{m=1}^\infty \frac{(-1)^m \tau_m }{4\pi^2 \mathscr{C}^2 \tau_n^2\tau_m^2+1} \Bigg).
\end{align*}
By a similar computation, we find that when $m\neq n$,
\[ \Big|\frac{\partial (K\xi)_n}{\partial \xi_m}\big|_{\xi=\delta}\Big| = \frac{8 \mathscr{C}\tau_n}{4\pi^2 \mathscr{C}^2\tau_n^2\tau_m^2+1}    . \]
Hence
\[ \big\| \nabla (K \xi)_n \big|_{\xi=\delta} \big\|_{\ell^1} \le \frac{16 \mathscr{C}\tau_1}{4\pi^2 \mathscr{C}^2 \tau_1^4+1} +\sum_{m=2}^\infty  \frac{8 \mathscr{C}\tau_1}{4\pi^2 \mathscr{C}^2\tau_1^2\tau_m^2+1}, \]
which is $<0.5$ by a crude estimation. We have thus obtained the desired conflict between \eqref{eq:meanvalue} and \eqref{eq:fixed}.
 \end{proof}

The preceding proof shows that the linear space of
entire functions that satisfy \eqref{eq:feqgeneral} is one-dimensional in the special case when $a=\frac{1}{2\Cone}$, $b=\frac{\pi}{2}$, and $\kappa_{\pm}=\frac{e^{\pm i\pi/4}}{\sqrt{4\pi \Cone}}$. 
It would be desirable to find an alternate proof that would allow us to conclude that this linear space is at most one-dimensional for general $a$, $b$, and $\kappa_\pm$ satisfying the admissibility condition  $\kappa_+^2-\kappa_-^2=\frac{ia}{2b}$. Since this solution space is invariant under $\mathcal{L}_{a,b}$ by the commutation relation of Lemma~\ref{lem:commrel}, we would then be able to infer that all entire solutions of \eqref{eq:feqgeneral} would also be eigenfunctions of $\mathcal{L}_{a,b}$.

\section{Power series expansion of \texorpdfstring{$\tau_n$}{tau}}
\label{sec:taun}
We will now use the functional equation \eqref{eq:functional} to establish the power series expansion of~$\tau_n$ proclaimed in Theorem~\ref{thm:zeros}.
We begin by observing that \eqref{eq:functional} along with the fact that
	\[ \arg \Phi(iy)=\sum_{n=1}^\infty (-1)^n \arctan \frac{y}{\tau_n}, \]
	implies that
\begin{equation} \label{eq:zeros} n+\frac12-\tau_n=\frac{2}{\pi} \sum_{m=1}^\infty
(-1)^{m+1} \arctan \frac{1}{2\pi \Cone \tau_n \tau_m}, \quad n\ge 1 . \end{equation}

Now using the expansion
\[ \arctan z = \sum_{m=1}^\infty (-1)^{m+1} \frac{z^{2m-1}}{2m-1}, \]
we may rewrite \eqref{eq:zeros} as
\[ n+\frac12-\tau_n = \frac{2}{\pi} \sum_{m=1}^\infty
(-1)^{m}  \frac{L_\tau(2m-1)}{(2m-1)(2\pi \Cone \tau_n)^{2m-1}}, \quad n\ge 1. \]
Introducing next the auxiliary series
	\[g(z)\coloneqq \frac{1}{\pi i} \sum_{m\ge0}\frac{2L_{\tau}(2m-1)z^{2m-1}}{2m-1}\,,\qquad 0<|z|<\tau_1\, ,\]
we see that the latter identity can be written as
	\[g\Big(\frac{1}{2\pi i \Cone \tau_n}\Big) = n+\frac12, \quad n = 1, 2, \ldots\, .\]
We define a power series $\rho(z)=\sum_{m\ge1}a_mz^m$ by the implicit equation
	\begin{equation}
	g\Big(\frac{1}{2\pi i \Cone (1/z-\rho(z))}\Big) = \frac{1}{z}\,,\qquad |z|<\varepsilon.
 	\end{equation}
Indeed, this equation simply asserts that $z\mapsto1/g(z)$ is inverse to $z\mapsto \frac{1}{2\pi i \Cone (1/z-\rho(z))}$ in a small neighborhood of the origin, so $\rho$ is well-defined and is an odd function. In what follows, given a function~$f$ analytic in a small punctured neighborhood of the origin, we denote by $[z^n]f(z)$ the coefficient of $z^n$ in the Laurent series of $f$ at $0$. 
\begin{theorem}\label{thm:zeroexp}
	We have $\rho(z)=\sum_{m\ge1}a_mz^m$, where
	\[a_m = [z]\frac{g(z)^m}{2\pi i \Cone m}\quad \text{and} \quad a_m\ge 0,  \quad m\ge1.\]
	This power series converges if and only if $|z|\le 2$. \end{theorem}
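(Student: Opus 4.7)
My plan begins with applying the Lagrange--B\"urmann inversion formula to derive the coefficient identity. Set $h(w) \coloneqq 1/g(w)$ and $G(w) \coloneqq g(w) - \tfrac{1}{2\pi i \Cone w}$; both are holomorphic at $w = 0$ with $h(0) = G(0) = 0$ and $h'(0) = 2\pi i \Cone \ne 0$, so the implicit equation defining $\rho$ reads $\rho = G \circ h^{-1}$. Lagrange--B\"urmann then gives
\[
a_m = [z^m]\rho(z) = \tfrac{1}{m}[w^{-1}]G'(w)g(w)^m.
\]
Writing $G'(w) = g'(w) + \tfrac{1}{2\pi i \Cone w^2}$ and observing that $g'(w)g(w)^m = \tfrac{1}{m+1}\tfrac{d}{dw}g(w)^{m+1}$ has zero residue at $0$, I am left with $a_m = \frac{1}{2\pi i \Cone m}[w]g(w)^m$.

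Next I parametrize the branch of $\rho$ taking positive real values via the negative imaginary axis. For $u > 0$ the function $g(-iu) = \tfrac{1}{2\pi\Cone u} - \tfrac{2}{\pi}\arg\Phi(iu)$ is real and, by a monotonicity check, decreases strictly from $+\infty$ to $1/2$ on $(0,\infty)$. The limiting value $1/2$ is extracted from the functional equation~\eqref{eq:functional}: evaluating at $z = iy$ for large $y$ and isolating the dominant exponential term yields $\arg \Phi(iy) = -\tfrac{\pi}{4} + \tfrac{1}{4\Cone y} + O(e^{-\pi y})$. Hence $z = h(-iu) = 1/g(-iu)$ traces out $(0, 2)$ bijectively, and $\rho(z) = G(-iu) = \tfrac{2}{\pi}\sum_n (-1)^{n+1}\arctan(u/\tau_n)$ increases continuously from $0$ to $1/2$; a standard analytic continuation argument extends $\rho$ to the open disc $|z| < 2$. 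The refined estimate $g(-iu) - \tfrac12 = O(e^{-\pi u})$ forces $u \sim -\tfrac{1}{\pi}\log(2-z)$ as $z \to 2^-$, whence $\rho(z) - \tfrac12 \sim \tfrac{\pi}{2\Cone \log(2-z)}$ is a non-analytic (logarithmic-type) singularity at $z = 2$, precluding continuation past that point and thereby pinning the radius of convergence at most $2$.

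The most delicate step is the positivity $a_m \ge 0$. My plan here is to apply Lagrange--B\"urmann in reverse: setting $F(\rho) \coloneqq h(G^{-1}(\rho))$ so that $z = F(\rho)$, one obtains
\[
a_m = \tfrac{1}{m}[\rho^{m-1}]\bigl(\rho/F(\rho)\bigr)^m, \qquad \rho/F(\rho) = \rho\, g(G^{-1}(\rho)) = \rho^2 + \frac{\rho}{2\pi\Cone\, u(\rho)},
\]
where $u(\rho) > 0$ is defined implicitly by $\rho = \tfrac{2}{\pi}\sum_n (-1)^{n+1}\arctan(u/\tau_n)$. The non-negativity of $a_m$ reduces to the same property for the Taylor coefficients of $\rho/F(\rho)$ in $\rho$, since this propagates to all positive powers and coefficient extractions. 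The hard part is controlling the cancellations: $\rho/(2\pi\Cone u(\rho))$ has a negative $\rho^2$ coefficient owing to the alternating structure of the arctan expansion, but the explicit $\rho^2$ summand compensates at this order, and a careful induction exploiting the decay of $|L_{\tau}(2k+1)|$ should extend non-negativity to every order. Once positivity is in hand, it combines with the established analyticity of $\rho$ on $|z| < 2$ to fix the radius of convergence at exactly $2$; Abel's continuity theorem then gives $\sum a_m\,2^m = \rho(2^-) = 1/2 < \infty$, so the series converges on $|z|\le 2$, while the singularity at $z = 2$ precludes convergence for $|z| > 2$.
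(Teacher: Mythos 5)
Your Lagrange--B\"urmann derivation of $a_m = \frac{1}{2\pi i\Cone m}[w]g(w)^m$ is correct and matches the paper's residue calculation, and your asymptotic analysis along the negative imaginary axis correctly identifies $\rho(x)\to\tfrac12$ as $x\to 2^-$ with a logarithmic-type singularity there (modulo a stray factor of $\pi$ in the constant), thereby pinning the radius of convergence at \emph{most}~$2$. However, there are two genuine gaps.

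First, the claim that ``a standard analytic continuation argument extends $\rho$ to the open disc $|z|<2$'' is unsupported, and it is not standard. Knowing that $\rho$ is real-analytic on $(0,2)$ with a singularity at $2$ gives no lower bound on the radius of convergence at $0$ --- there could be a complex singularity of smaller modulus. The paper handles this by producing a closed contour $\gamma_\varepsilon$ around the origin on which $|g(z)|\le\tfrac12+\varepsilon$ (built from a carefully located curve $\gamma^+$ in the first quadrant along which $g$ is real and lies in $[-\tfrac12,0)$, plus a horizontal cap and reflections), and then invoking the integral representation $\rho(u)=-\frac{1}{(2\pi i)^2\Cone}\oint_{\gamma_\varepsilon}\log(1-ug(z))\,\frac{dz}{z^2}$ which visibly extends to all $|u|<\frac{2}{1+2\varepsilon}$. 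You never construct such a contour. (One could instead get radius $\ge 2$ for free from $a_m\ge0$ together with boundedness of $\sum a_m x^m$ on $(0,2)$, but that requires positivity first --- which is the second gap.)

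Second, the positivity argument is a sketch, not a proof, and it relies on a claim ($\rho/F(\rho)$ has nonnegative Taylor coefficients) that is strictly stronger than what you need and is not established. You concede that $\rho/(2\pi\Cone u(\rho))$ has a negative coefficient at order $\rho^2$ and assert that ``the explicit $\rho^2$ summand compensates'' and that ``a careful induction\dots should extend non-negativity to every order,'' but no such induction is exhibited, and it is far from clear that one works. The paper's route is quite different: it writes $a_m = -\frac{1}{(2\pi)^2\Cone m}\oint_\gamma g(z)^m\,\frac{dz}{z^2}$, deforms to the curve $\gamma^+$ where $g\in[-\tfrac12,0)$ (so $g^m<0$ for $m$ odd), and then reduces positivity to the concrete differential inequality $\frac{\Xi'(y)}{\Xi(y)}\big(y^2-\Xi(y)^2\big)\le 2y$ for the parametrization $\gamma^+(y)=\Xi(y)+iy$, which it verifies by implicit differentiation of the defining equation $\arg w(\Xi(y)+iy)=\pi$. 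Until you can either prove nonnegativity of the coefficients of $\rho/F(\rho)$ or supply a comparable geometric argument, the positivity --- and therefore the convergence on $|z|=2$ --- remains unproved.
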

It is immediate from this theorem and the preceding discussion that
we now have the power series representation
	\begin{equation*}\label{eq:analyticzeros}
	\tau_n = n+1/2-\rho\Big(\frac{1}{n+1/2}\Big) = n+1/2-\sum_{m\ge1}a_{2m-1}(n+1/2)^{1-2m}
	\end{equation*}
	for all $n\ge1$, and so we obtain Theorem~\ref{thm:zeros} as a corollary. 
Note that $\rho$ is an odd function, so $a_m$ vanish for even $m$. The first few nontrivial relations between $a_m$ and $L_{\tau}(2m-1)$ are
	\begin{equation} \label{eq:a1q} \pi^2a_1 = -\frac{L_{\tau}(1)}{\Cone}\,,\ \
	  \pi^4a_3 = \frac{L_{\tau}(1)^2}{\Cone^2}+\frac{L_{\tau}(3)}{12\Cone^3}\,,\ \
	  \pi^6a_5 = -2\frac{L_{\tau}(1)^3}{\Cone^3}-\frac{L_{\tau}(3)L_{\tau}(1)}{3\Cone^4}-\frac{L_{\tau}(5)}{80\Cone^5}\,.\end{equation} 
From~\eqref{eq:charphi} it follows that $L_{\tau}(2k-1)$ is an element in $\Q[\Cone,L_{\tau}(1),\pi]$. For instance,
	\begin{align*}
	L_{\tau}(3) &= 24L_{\tau}(1)\Cone^2 + (\pi^2/2+2L_{\tau}(1)^2)\Cone\,,\\
	L_{\tau}(5) &= 1920L_{\tau}(1)\Cone^4 + (40\pi^2 + 448L_{\tau}(1)^2)\Cone^3 + (2L_{\tau}(1)\pi^2 + 8L_{\tau}(1)^3)\Cone^2.
	\end{align*}
	 Therefore, the coefficients $a_m$ can also be written in this way:
	 \begin{equation} \label{eq:allaq}
	  \begin{aligned} 
	 \pi^4a_3 &= \frac{48L_{\tau}(1)\Cone + \pi^2 + 28L_{\tau}(1)^2}{24\Cone^2}\,,\\
	 \pi^6a_5 &= -\frac{2880L_{\tau}(1)\Cone^2 + (60\pi^2 + 1632L_{\tau}(1)^2)\Cone + (23L_{\tau}(1)\pi^2 + 332L_{\tau}(1)^3)}{120\Cone^3}\,,
	 \end{aligned}
	 \end{equation}
	 and so on.
\begin{proof}[Proof of Theorem~\ref{thm:zeroexp}]
	The first claim is simply a form of Lagrange's inversion theorem, and can be seen directly from the following residue calculation:
	\begin{multline*}
	a_m = [z^m]\rho(z) = -\mathrm{res}_{z=0}\frac{1/z-\rho(z)}{z^{m+1}}dz
	= -\mathrm{res}_{w=0}\frac{g(w)-\rho(1/g(w))}{g(w)^{-m-1}}d(1/g(w))\\
	= \frac{1}{2\pi i \Cone m}\mathrm{res}_{w=0}w^{-1}d(g(w)^{m})
	= \frac{1}{2\pi i \Cone m}\mathrm{res}_{w=0}w^{-2}g(w)^{m}dw = [z]\frac{g(z)^m}{2\pi i \Cone m}\,.
	\end{multline*}
	As an immediate consequence, we see that $\rho(u)$ has the representation
	\begin{equation} \label{eq:rhorep} \rho(u) = -\frac{1}{(2\pi i)^2 \Cone}\oint_{\gamma} \log(1-ug(z))\frac{dz}{z^2}\,\end{equation}
	for small values of $u$, where $\gamma$ is any path around zero on which $|g(z)|$ is bounded from above. In view of \eqref{eq:rhorep},  it suffices to find a path $\gamma_\varepsilon$ around $0$ along which $|g(z)|\le \frac12 + \varepsilon $ to prove that the radius of convergence $R$ of $\sum_{m\ge1}a_mz^m$ is at least $2$. By symmetry, it will be enough to prescribe this path as we move in the first quadrant, from some point on the positive real axis to a point on the positive imaginary axis. We will use the identity
	\begin{equation} \label{eq:loggg}  \exp(\pi i g(z)) = \frac{A(z)}{A(-z)}\,,\qquad 0<|z|<\tau_1\,,\end{equation}
	 which defines $g(z)$ in the punctured strip
$\{x+iy:\ -\tau_1< x<\tau_1 \}\setminus \{0\}$. By \eqref{eq:functional}, we have
\[ \frac{A(z)}{A(-z)}= -i \left(\frac{1-i w}{1+iw}\right), \]
where
\begin{equation} \label{eq:wdef} w(z)\coloneqq e^{i\pi z} \frac{\Phi\big(\frac{1}{2\pi i \Cone z}\big)}{\Phi\big(-\frac{1}{2\pi i \Cone z}\big)} . \end{equation}
We see that $|w(z)|<1$ when $z=x+iy$ satisfies $x\ge 0$ and $y> 0$, whence $\frac{1-i w(z)}{1+iw(z)}$ lies in the right half-plane. Since $g(x)$ takes imaginary values for real $x$, we find from \eqref{eq:loggg} that
$\operatorname{Im} \pi i g(z) $ lies in the lower half-plane.

We now prove that there exists a curve $\gamma^+$ in the first quadrant along which $\operatorname{Re} w(z) <0$ and $\operatorname{Im} w(z) =0$, with a parameterization of the form
\[ \gamma^+(y)=\Xi(y)+iy, \quad  \frac12 < \Xi(y) < 1, \ y\ge 0.\]
To this end, we begin by noting that for $\frac12 \le x \le 1$ and $y\ge 0$, the sequence
\begin{align*} \kappa_n\coloneqq  \arg & \left(1+(-1)^n\frac{1}{\tau_n 2\pi i \Cone(x+iy)}\right) -  \arg \left(1-(-1)^n\frac{1}{\tau_n 2\pi i \Cone(x+iy)}\right) \\
& =  \arg  \left(1-(-1)^n\frac{y+ix}{\tau_n 2\pi  \Cone(x^2+y^2)}\right) -  \arg \left(1+(-1)^n\frac{y+ix}{\tau_n 2\pi \Cone(x^2+y^2)}\right) \end{align*}
is alternating with $0<\kappa_1<\frac{\pi}{2}$ and $|\kappa_n|\searrow 0$. Hence
\[ 0<\arg \frac{A(x+iy)}{A(-x-iy)}<\frac{\pi}{2}, \quad \frac12  \le x \le 1, \ y\ge 0. \]
In view of  \eqref{eq:wdef}, this means that for every $y\ge 0$, there exists a number $\Xi(y)$ between $\frac12$ and $1$ such that $\arg w\big(\Xi(y)+iy\big)=\pi$. We have thus verified the existence of the curve $\gamma^+$.

 We see that $g(z)$ is real for $z$ in $\gamma^+$ and in fact
\begin{equation} \label{eq:negative} - \frac{1}{2} \le g(z) < 0, \quad z\in \gamma^+. \end{equation}
To finish the proof that $R\ge 2$, we choose a sufficiently large $Y$ such that $|g(x+iY)|\le \frac12+\varepsilon$ for $x+iY$ with $0 \le x \le \Xi(Y)$, and we let $\gamma_{\varepsilon}$ intersected with the first quadrant be the curve parametrized as $\Xi(y)+iy$ for $0\le y < Y$ and $x+iY$ for $0\le x \le x(Y)$.

To prove that $R=2$, we assume that $R>2$ and show that this leads to a contradiction. First, since $g(-iy)\to \frac12$ when $y\to \infty$, we have that $\rho(x)\to \frac12$ when $x\to 2$. Then by our assumption that $R>2$,
\[ \lim_{x\to 2} \frac{\rho(x)-\frac12}{x-2} \]
exists and takes a finite value, and this is equivalent to the assertion that
\[ \lim_{x\to 2} \frac{\rho(x)-\frac1x}{x-2} \]
exists and takes a finite value. But since $z\mapsto\frac{1}{g(z)}$ is inverse to $z\mapsto \frac{1}{2\pi i \Cone (1/z-\rho(z))}$, we have
\[  \lim_{x\to 2} \frac{\rho(x)-\frac1x}{x-2} = \lim_{y\to \infty} \frac{y}{2\pi  \Cone (\frac{1}{g(-iy)}-2)}. \]
Here the limit to the right cannot be finite because $g(-iy)=\frac12+O(e^{-\pi y})$, and so our assumption that $R>2$ cannot be true.

The preceding argument shows that
\begin{equation}  \label{eq:oncirc} \rho(x)-\frac12 =O\Big(\frac{1}{\log \frac{1}{2-x}}\Big) \end{equation}
when $x\to 2^{-}$, and so $\rho(2)=\frac12$. Absolute convergence of the power series on its radius of convergence will therefore follow once we have shown that $a_m\ge 0$ for all $m\ge 1$.
To this end, we note that it suffices to consider the case when $m$ is odd, since $\rho$ is an odd function. We start from the formula
\[ a_m=-\frac{1}{(2\pi )^2 \Cone m}\oint_{\gamma}\, [g(z)]^m \frac{dz}{z^2}, \]
where $\gamma$ is again a suitable path around zero. We let $\gamma^+$ be as above and find that, by symmetry,
\[ a_m=-  \frac{4}{(2\pi )^2 \Cone m} \operatorname{Re} \int_{\gamma^+}\,  [g(z)]^m \frac{dz}{z^2},\]
which can be expressed as
 \[ a_m=-  \frac{4}{(2\pi )^2 \Cone m}  \int_{0}^\infty \,  [g(\Xi(y)+iy)]^m \big(2y \Xi(y)+\Xi'(y)(\Xi^2(y)-y^2)\big) \frac{dy}{|\Xi(y)+iy|^4}.\]
 Since $ [g(\Xi(y)+iy)]^m<0$ by \eqref{eq:negative}, we are done if we can show that
 \begin{equation} \label{eq:keypos} \frac{\Xi'(y)}{\Xi(y)} (y^2-\Xi^2(y)) \le 2 y \end{equation}
 holds for all $y>0$. To prove \eqref{eq:keypos}, we start from the equation
 \[ \arg w(\Xi(y)+iy)=\pi , \]
 which by implicit differentiation and \eqref{eq:wdef} yields
 \[ \pi \Xi'(y) =\operatorname{Im} \left( \frac{\Phi'\big(\frac{1}{2\pi i \Cone z}\big)}{\Phi\big(\frac{1}{2\pi i \Cone z}\big)}
 + \frac{\Phi'\big(-\frac{1}{2\pi i \Cone z}\big)}{\Phi\big(-\frac{1}{2\pi i \Cone z}\big)}\right) \frac{(\Xi'(y)+i)}{2\pi i \Cone(\Xi(y)+iy)^2} .\]
Using the definition of $\Phi$, we find that
 \[ \frac{\Phi'\big(\frac{1}{2\pi i \Cone z}\big)}{\Phi\big(\frac{1}{2\pi i \Cone z}\big)}
 + \frac{\Phi'\big(-\frac{1}{2\pi i \Cone z}\big)}{\Phi\big(-\frac{1}{2\pi i \Cone z}\big)}
 = \sum_{n=1}^{\infty} \Big(\frac{1}{\frac{1}{ 2\pi i \Cone(x+iy)}+(-1)^n\tau_n} + \frac{1}{\frac{-1}{ 2\pi i \Cone(x+iy)}+(-1)^n\tau_n}\Big), \]
 and so
 \[ \pi \Xi'(y)=- \im  \sum_{n=1}^{\infty} \frac{4\pi i \Cone (-1)^n (\Xi'(y)+i)}{(\tau_n^2  4\pi^2 \Cone^2(\Xi(y)+iy)^2+1)} .\]
 Hence
 \[ \frac{\Xi'(y)}{\Xi(y)}= \sum_{n=1}^\infty \frac{32 (-1)^{n+1}\tau_n^2 \pi^2 \Cone^3  y}{|\tau_n^2  4\pi^2 \Cone^2(\Xi(y)+iy)^2+1|^2} \cdot \left(1+\sum_{n=1}^\infty \frac{4(-1)^n \Cone\big(4\tau_n^2 \pi^2 \Cone^2(\Xi^2(y)-y^2)+1\big)}{|\tau_n^2  4\pi^2 \Cone^2(\Xi(y)+iy)^2+1|^2} \right)^{-1}.\]
 We see from this expression that $\Xi'(y)> 0$ for all $y>0 $, and so \eqref{eq:keypos} holds trivially when $0<y\le \Xi(y)$. Furthermore, when $y\ge \Xi(y)$, the same expression yields
\begin{align*}   \frac{\Xi'(y)}{\Xi(y)} \le & \sum_{n=1}^\infty \frac{32 (-1)^{n+1}\tau_n^2 \pi^2 \Cone^3  y}{|\tau_n^2  4\pi^2 \Cone^2(\Xi(y)+iy)^2+1|^2} \le \frac{32 \tau_1^2 \pi^2 \Cone^3  y}{|\tau_1^2  4\pi^2 \Cone^2(\Xi(y)+iy)^2+1|^2} \\
& \le \frac{1}{2  \tau_1^2 \pi^2 \Cone \Xi^2(y) y} \le \frac{2}{ \tau_1^2 \pi^2 \Cone  y} ,  \end{align*}
where we in the last step used that $\Xi(y)>\frac12$. Since clearly $ \tau_1^2 \pi^2 \Cone\ge 1$ ($\tau_1>1$ by the main result of~\cite{BORS} and $\Cone>1/2$ by~\eqref{eq:hb}), we conclude that \eqref{eq:keypos} holds for all $y>0$.
\end{proof}

\section{Behavior of the Fourier transform of \texorpdfstring{$\phi$}{phi}}
\label{sec:phihat}
We now turn to the proof of Theorem~\ref{thm:phifourier}.
We start with a lemma that will allow us to compute the Fourier transform of eigenfunctions of the differential operators $\mathcal{L}_{a,b}$ from~\S~\ref{sec:family}.

\begin{lemma} \label{lem:genfourier}
	Let $b>0$ and $f,g,h$ be three entire functions satisfying
	\[zf(z) = e^{ibz}g(1/z) + e^{-ibz}h(1/z).\]
	Then $\widehat{f}$ is supported in $[-b,b]$, and on $[-b,b]$ it is an analytic function whose Taylor expansions at the endpoints are given by
	\[\widehat{f}(\xi) =   2\pi i\sum_{n=0}^{\infty}\frac{g^{(n)}(0)}{n!^2}(  i(b-\xi))^n
	                            = -2\pi i\sum_{n=0}^{\infty}\frac{h^{(n)}(0)}{n!^2}(-i(b+\xi))^n\,.\]
\end{lemma}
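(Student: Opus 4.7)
\emph{Plan.} My approach combines Paley--Wiener for the support statement with direct contour integration along indented contours for the explicit series.

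First, the functional equation forces $f$ to be of exponential type at most $b$: for any $z$ one of the exponentials $e^{\pm ibz}$ stays bounded while the other is $O(e^{b|\operatorname{Im} z|})$, and $g(1/z), h(1/z)$ tend to $g(0), h(0)$ as $|z|\to\infty$, so $|z f(z)| \ll e^{b|\operatorname{Im} z|}$ for large $|z|$. Restricted to the real line this also gives $f(x) = O(1/|x|)$, so $f \in L^{2}(\mathbb R)$, and Paley--Wiener yields the support statement $\operatorname{supp}\widehat f \subseteq [-b,b]$.

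For the explicit Taylor expansions, I deform $\mathbb R$ to an indented contour $\gamma_{+}^{\varepsilon}$ consisting of the real line with a small upward semicircular detour of radius $\varepsilon$ around the origin; the deformation is permitted because $f$ is entire (the difference from $\mathbb R$ is a small closed loop bounding a region where $f(z)e^{-i\xi z}$ is holomorphic). On $\gamma_{+}^{\varepsilon}$ the two summands from the functional equation are separately integrable, so
\[
\widehat f(\xi) = \int_{\gamma_{+}^{\varepsilon}} \frac{e^{i(b-\xi)z}\,g(1/z)}{z}\,dz + \int_{\gamma_{+}^{\varepsilon}} \frac{e^{-i(b+\xi)z}\,h(1/z)}{z}\,dz.
\]
For $\xi<b$ I close the first integral in the upper half-plane where $e^{i(b-\xi)z}$ decays; the enclosed region excludes $0$ (cut off by the upward indent), so Cauchy's theorem together with a Jordan-type estimate (using that $g(1/z)$ is uniformly bounded on large semicircles) makes this piece vanish. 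For $\xi>-b$ I close the second integral in the lower half-plane where $e^{-i(b+\xi)z}$ decays; this time the upward indent captures $0$ inside the closed (clockwise) contour, and the residue theorem at the essential singularity gives
\[
\int_{\gamma_{+}^{\varepsilon}} \frac{e^{-i(b+\xi)z}h(1/z)}{z}\,dz = -2\pi i\sum_{n=0}^{\infty}\frac{h^{(n)}(0)}{n!^{2}}\bigl(-i(b+\xi)\bigr)^{n},
\]
the sum being the coefficient of $z^{-1}$ in the Laurent expansion, computed as a Cauchy product of the Taylor series of $e^{-i(b+\xi)z}$ with $h(1/z)/z = \sum_{m\ge 0} h^{(m)}(0)z^{-m-1}/m!$. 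Replacing the upward indent by a downward one $\gamma_{-}^{\varepsilon}$ interchanges the roles of the two summands and yields the companion series $2\pi i\sum_{n\ge 0} g^{(n)}(0)(i(b-\xi))^{n}/n!^{2}$.

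Absolute convergence of each series on all of $\mathbb C$ is immediate: for any $r>0$, Cauchy's estimates give $|g^{(n)}(0)|\le M_{r}\,n!\,r^{-n}$, and so $|g^{(n)}(0)/n!^{2}|\le M_{r}/(n!\,r^{n})$, which makes each series an entire function of order at most $\tfrac12$. Equality of the two representations on $[-b,b]$ is then automatic since each agrees with $\widehat f(\xi)$ on $(-b,b)$ and extends by analyticity to the closed interval.

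The main technical subtlety is purely topological: keeping track of whether the origin ends up inside or outside the closed contour obtained by combining an indented line with a large half-circle. The key point is that an upward bump at the origin combined with a downward closure \emph{captures} $0$ inside the enclosed region (the bump shields it from the lower semi-disc from above), so the residue theorem contributes; combined with an upward closure, the same bump \emph{excludes} $0$, so Cauchy gives zero. Once this bookkeeping is settled, the rest of the argument reduces to routine estimates.
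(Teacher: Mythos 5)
Your proof is correct, and the core argument — integrate $f(z)e^{-i\xi z}$ over an indented real line, split the integrand using the functional equation, close each piece with a large half-circle in the half-plane where its exponential decays, and read off the residue at the essential singularity $z=0$ (with the sign governed by whether the indent shields the origin from the closure or exposes it to it) — is the same as the paper's, down to the orientation bookkeeping. The residue computation as a Cauchy product also matches, and your Cauchy-estimate argument for entirety of the series is sound (the stronger claim of order $\le\frac12$ is a side remark that would need $g$ to have order $\le 1$, but nothing in the lemma depends on it).

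The one place you diverge is the support statement: you establish $\operatorname{supp}\widehat f\subseteq[-b,b]$ by noting that $f$ has exponential type $\le b$ and lies in $L^2(\mathbb R)$ and then invoking Paley--Wiener, whereas the paper extracts it from the very same contour calculation (for $\xi>b$ both exponents $e^{i(b-\xi)z}$ and $e^{-i(b+\xi)z}$ decay in the lower half-plane, so the lower-indented contour makes both terms vanish; symmetrically for $\xi<-b$). Both routes are fine; the paper's is slightly more self-contained since it reuses the one computation rather than appealing to Paley--Wiener, and it avoids the mild awkwardness that $f(x)=O(1/|x|)$ puts $f$ in $L^2$ but not $L^1$, so the integral representation of $\widehat f$ must be understood as a symmetric principal value (a point the paper flags parenthetically and which your phrasing about ``separate integrability'' glosses over, though the argument goes through in exactly that principal-value sense).
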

\begin{proof}
	Let $\gamma_{+}$ be a path in $\C$ going from $-\infty$ to $-1$, then from $-1$ to $1$ along a semi-circle in the upper half-plane, and finally from $1$ to $\infty$, and let $\gamma_{-}$ be a similar path going along a semi-circle in the lower half-plane instead. A standard contour integral calculation shows that for any entire function $F$ with $F(0)=0$ we have
	\[\int_{\gamma_{\pm}}e^{i\lambda z}F(1/z)dz = \begin{cases}
		0,\qquad\qquad\qquad\qquad\qquad\quad \pm\lambda>0,\\
		\mp2\pi i\,\mathrm{res}_{z=0}e^{i\lambda z}F(1/z),\qquad \pm\lambda<0.
	\end{cases}\]
	(If $F'(0)\ne0$, then the integral on the left should be taken as a limit over symmetric truncations of $\gamma$.)
	Since $\widehat{f}(\xi) = \int_{\gamma_{\pm}}f(x)e^{-i\xi x}dx=\int_{\gamma_{\pm}}(e^{ibx}g(1/x)+e^{-ibx}h(1/x))e^{-i\xi x}\frac{dx}{x}$, applying the above observation shows that $\widehat{f}$ is supported in $[-b,b]$, and for $\xi$ in $(-b,b)$ we have
	\[\widehat{f}(\xi) 
	= -2\pi i\,\mathrm{res}_{z=0}e^{-i(b+\xi)z}z^{-1}h(1/z)
	= 2\pi i\,\mathrm{res}_{z=0}e^{i(b-\xi)z}z^{-1}g(1/z),\]
	which implies the claim upon computing the residues.
\end{proof}
Applying this lemma to~\eqref{eq:feqgeneral}, we obtain the following.

\begin{corollary} \label{cor:fourier}
	For any $b>0$ and any eigenfunction $f$ of $\mathcal{L}_{a,b}$, the Fourier transform of $f$ is supported in $[-b,b]$, and is equal to a restriction of an analytic function. Moreover, the numbers $\kappa_{\pm}$ can be computed from $\kappa_{\pm} = \pm 2\pi i\widehat{f}(\mp b)$.
\end{corollary}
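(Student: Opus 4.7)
The plan is to massage the functional equation~\eqref{eq:feqgeneral} of Lemma~\ref{lem:feqgeneral} into the shape required by Lemma~\ref{lem:genfourier}, and then read off both assertions of the corollary directly from that lemma.

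Concretely, starting from
\[
ze^{a/(2z)}f(z) = \kappa_+ e^{-ibz} f\Big(\frac{ia}{2bz}\Big) + \kappa_- e^{ibz} f\Big(-\frac{ia}{2bz}\Big),
\]
I would multiply both sides by $e^{-a/(2z)}$ and introduce the auxiliary functions
\[
g(w) \coloneqq \kappa_- e^{-aw/2} f\Big(-\frac{iaw}{2b}\Big), \qquad h(w) \coloneqq \kappa_+ e^{-aw/2} f\Big(\frac{iaw}{2b}\Big).
\]
Since $f$ is entire and $e^{-aw/2}$ is entire, both $g$ and $h$ are entire, and the rewritten equation reads $zf(z) = e^{ibz} g(1/z) + e^{-ibz} h(1/z)$, which is exactly the hypothesis of Lemma~\ref{lem:genfourier}. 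Applying that lemma at once gives both that $\widehat{f}$ is supported in $[-b,b]$ and that $\widehat{f}|_{[-b,b]}$ is the restriction of an analytic function, with convergent Taylor expansions at the two endpoints.

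For the \emph{moreover} part, I would evaluate the explicit endpoint formulas of Lemma~\ref{lem:genfourier}: only the constant terms survive, giving $\widehat{f}(b) = 2\pi i\, g(0)$ and $\widehat{f}(-b) = -2\pi i\, h(0)$. Using the eigenfunction normalization $f(0)=1$ one gets $g(0) = \kappa_-$ and $h(0) = \kappa_+$, and inverting these relations expresses $\kappa_\pm$ in terms of $\widehat{f}(\mp b)$. The only point requiring attention is that the essential singularity of $e^{a/(2z)}$ at the origin becomes an entire factor $e^{aw/2}$ under the substitution $w=1/z$, which is precisely what makes $g$ and $h$ entire as demanded by Lemma~\ref{lem:genfourier}; beyond this routine verification, there is no real obstacle, as the corollary amounts to a direct reading of the preceding lemma against the functional equation.
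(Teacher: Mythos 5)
Your proposal is correct and takes precisely the route the paper takes --- the paper's proof is the single sentence ``Applying this lemma to~\eqref{eq:feqgeneral}, we obtain the following,'' and you are simply filling in the details. The identification of $g(w)=\kappa_-e^{-aw/2}f(-\tfrac{iaw}{2b})$ and $h(w)=\kappa_+e^{-aw/2}f(\tfrac{iaw}{2b})$ is the right one (note it is $e^{-aw/2}$, not $e^{aw/2}$ as you wrote in your closing remark, though that is a slip of the pen and does not affect the entirety claim), and reading off the endpoint values from Lemma~\ref{lem:genfourier} does indeed give $\widehat{f}(b)=2\pi i\,g(0)$ and $\widehat{f}(-b)=-2\pi i\,h(0)$.

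One point worth flagging: when you actually carry out the inversion with $g(0)=\kappa_-$, $h(0)=\kappa_+$ (which, as you correctly note, requires the normalization $f(0)=1$), you obtain $\kappa_{\pm}=\mp\,\widehat{f}(\mp b)/(2\pi i)=\pm\tfrac{i}{2\pi}\widehat{f}(\mp b)$. This has the same sign pattern as the formula $\kappa_{\pm}=\pm 2\pi i\,\widehat{f}(\mp b)$ printed in the corollary, but the constant differs by a factor of $4\pi^2$; the printed constant appears to be a typo, so you should write the constant out explicitly rather than stopping at ``inverting these relations expresses $\kappa_{\pm}$ in terms of $\widehat{f}(\mp b)$.'' (The discrepancy is harmless for the rest of the paper: the only downstream use, in \S\ref{sec:numerics}, converts $\kappa_{+}e^{i\pi/4}+\kappa_{-}e^{-i\pi/4}=0$ into $\widehat{f}(1)=i\widehat{f}(-1)$, and for that only the ratio $\kappa_{+}/\kappa_{-}$ matters, which both constants give identically.) It would also be worth remarking that the ``moreover'' formula cannot hold as stated for an arbitrary eigenfunction, since $\kappa_{\pm}$ are invariant under rescaling $f\mapsto cf$ while $\widehat{f}$ is not --- so the normalization $f(0)=1$ you invoked is genuinely part of the hypothesis, not just a convenience.
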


We also get the desired regularity of the Fourier transform of the H\"ormander--Bernhardsson function.

\begin{proof}[Proof of Theorem~\ref{thm:phifourier}]
Multiplying~\eqref{eq:functional} by the same identity after substitution
$z\mapsto-z$ and using the fact that $\phi(z)=\Phi(z)\Phi(-z)$, we get
\begin{equation} \label{eq:phifunctional}
	-4\pi \Cone z^2\phi(z) =
	e^{\pi i z}\Phi^2\Big(\frac{1}{2\pi i \Cone z}\Big)+e^{-\pi i
		z}\Phi^2\Big(-\frac{1}{2\pi i \Cone z}\Big)\,.
\end{equation}
By Lemma~\ref{lem:genfourier},
	\[-\widehat{\phi}(\pi\xi) = 2\pi i\,\mathrm{res}_{z=0}\frac{e^{\pi i(1-\xi)z}}{4\pi \Cone z^2}\Phi^2\Big(\frac{1}{2\pi i\Cone z}\Big),\]
which after a short calculation implies~\eqref{eq:phifourier}.
\end{proof}

\section{Summation formulas}
\label{sec:summation}
We now return to $\eqref{eq:dercond}$ which we reformulate as a summation 
formula valid for all $f$ in $PW^1$ in terms of the values at the zeros 
$\pm \tau_n$ of $\phi$:
\[
-\frac{f'(0)}{2\Cone} = \sum_{n= 1}^\infty (-1)^n(f(\tau_n)-f(-\tau_n)).
\]
The observation to be made in this section is that any eigenfunction of the differential operator
$\mathcal{L}_{a,b}$ defined in  \eqref{eq:diffspecial} yields an
analogous summation formula. We set for convenience $b= \pi/2$ so that all formulas pertain to functions of exponential type at most $\pi$. For an entire function $g$, we let $\mathcal{Z}(g)$ denote its zero set, with multiplicites accounted for in the usual way. 
\begin{theorem}\label{thm:representation}
	Let $a$ be a nonzero complex number and $g$ an eigenfunction of 
	$\mathcal{L}_{a,\frac{\pi}2}$. Then for every 
	$f$ in $PW^1$ we have
	\begin{equation} \label{eq:summ}
	a f'(0) = \sum_{\mu \in \mathcal{Z}(g)} \big(f(\mu)-f(-\mu)\big).
	\end{equation}
	\end{theorem}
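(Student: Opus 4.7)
Since $\widehat{f}$ is continuous and supported in $[-\pi,\pi]$ for $f\in PW^1$, we have $f(\mu)-f(-\mu) = \frac{i}{\pi}\int_{-\pi}^{\pi}\widehat{f}(\xi)\sin(\mu\xi)\,d\xi$ and $af'(0) = \frac{ia}{2\pi}\int_{-\pi}^{\pi}\xi\widehat{f}(\xi)\,d\xi$, so the summation formula is equivalent to the distributional identity
\begin{equation}\label{eq:target}
\sum_{\mu\in\mathcal{Z}(g)}\sin(\mu\xi) = \frac{a\xi}{2},\qquad\xi\in(-\pi,\pi).
\end{equation}
This is the precise analog of the Fourier-side identity \eqref{eq:dercond}, and my plan is to establish it by adapting the argument of Lemma~\ref{lem:summform}, replacing the $L^1$-extremality input with the functional equation of Lemma~\ref{lem:feqgeneral} together with the Fourier-support information of Corollary~\ref{cor:fourier}.

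The first step is to show that $\sum_{\mu}\sin(\mu\xi)$ defines a tempered distribution on $(-\pi,\pi)$. Using the functional equation
\[ze^{a/(2z)}g(z) = \kappa_{+}e^{-i\pi z/2}g\Big(\tfrac{ia}{\pi z}\Big) + \kappa_{-}e^{i\pi z/2}g\Big(-\tfrac{ia}{\pi z}\Big)\]
combined with $\widehat{g}$ supported in $[-\pi/2,\pi/2]$, a perturbation/fixed-point argument in the spirit of \S~\ref{sec:functionaleq} should show that the zeros $\mu_n$ of $g$ lie within an $\ell^2$ error of a canonical alternating shifted half-integer sequence. The second step is the identity \eqref{eq:target} itself. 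Writing the Hadamard factorization $g(z) = e^{g'(0)z}\prod_n(1-z/\mu_n)e^{z/\mu_n}$ and using the form \eqref{eq:diffspecialalt} of the differential equation, one obtains the asymptotic expansion $g'(z)/g(z) = \mp i\pi/2 + a/(2z^{2}) + O(1/z^{3})$ as $\operatorname{Im}(z)\to\pm\infty$. Integrating $e^{\pm iz\xi}g'(z)/g(z)$ along large rectangular contours and combining the two formulas then isolates $\sum\sin(\mu_n\xi)$ on the real line, with the linear right-hand side $a\xi/2$ produced by the subleading $a/(2z^{2})$ term above. The third step is to pair \eqref{eq:target} against $\widehat{f}$ by the same density reasoning as at the end of the proof of Lemma~\ref{lem:summform}, which recovers~\eqref{eq:summ}.

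\textbf{Main obstacle.} The delicate point is the zero-set regularity required in the first step. For the reference case $g=\Phi$ the analogous $\ell^{2}$ statement is the nontrivial result of \cite{BORS}, whose proof made essential use of the extremal character of $\varphi$. For a general eigenfunction the eigenvalue $\lambda$ of $\mathcal{L}_{a,\pi/2}$, and hence the zeros $\mu_n$, may be complex, so one must both derive the asymptotic regularity purely from the functional equation and specify a convention---presumably a conjugate-pairing of complex zeros---under which $\sum\sin(\mu\xi)$ is unambiguously a tempered distribution on $(-\pi,\pi)$.
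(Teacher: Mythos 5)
Your reduction of the theorem to the distributional identity $\sum_{\mu\in\mathcal{Z}(g)}\sin(\mu\xi)=\frac{a\xi}{2}$ on $(-\pi,\pi)$, followed by a Plancherel/density argument, matches the endgame of the paper's proof exactly. But the route you propose to reach that identity is genuinely different from the paper's, and as sketched it has both a concrete error and a gap that the paper's argument is specifically designed to avoid.

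The paper's key observation is the short lemma immediately preceding the theorem: since $A(z)=e^{a/(2z)}g(z)$ and $A(-z)$ both solve the conjugated equation \eqref{eq:diffspecialalt}, their Wronskian is $Cz^{-2}$, and a two-line normalization at $z=0$ shows $C=a$; that is, $A'(z)A(-z)+A'(-z)A(z)=-a/z^{2}$. This yields $\psi(z)\Theta(z)=-\frac{a}{2z}$ with $\psi=A(z)A(-z)$ and $\Theta=\frac z2\bigl(\frac{A'(z)}{A(z)}+\frac{A'(-z)}{A(-z)}\bigr)$, after which the reproducing formula and its reversal proceed verbatim as in Theorem~\ref{thm:charphi} and Lemma~\ref{lem:frep}. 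In particular one never needs to locate the zeros of $g$, and complex eigenvalues cause no trouble. Your sketch replaces this Wronskian step by WKB asymptotics of $g'/g$ plus rectangular-contour integration, which is a legitimate alternative strategy, but: (i) you need the zero-counting/regularity information that you yourself flag as the ``main obstacle,'' whereas the paper gets the needed growth of $\psi$ and the absence of stray polynomial factors essentially from the functional equation \eqref{eq:feqgeneral}, not from an $\ell^{2}$ statement about $\mathcal{Z}(g)$; and (ii) your asymptotic expansion is wrong. From $g(z)\sim c_{\mp}z^{-1}e^{\mp i\pi z/2}$ as $\operatorname{Im}(z)\to\pm\infty$ (visible either from the functional equation or from $\widehat{g}$ having nonzero boundary values at $\pm\pi/2$), one gets
\[
\frac{g'(z)}{g(z)}=\mp\frac{i\pi}{2}-\frac1z+\Bigl(\frac a2\pm\frac{i\lambda}{\pi}\Bigr)\frac1{z^{2}}+O(z^{-3}),
\]
so your expression is missing the $-1/z$ term and the $\lambda$-dependent part of the $z^{-2}$ coefficient. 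The $-1/z$ contributes a nontrivial amount to the contour integrals you describe, and although cancellations between the upper and lower half-plane contributions would ultimately have to restore the answer $a\xi/2$, the bookkeeping is not as you stated. In short: same target identity, different and riskier route, with the Wronskian identity \eqref{eq:quadratic2} being the cheap structural fact that your sketch is missing and that makes the paper's proof clean and robust.
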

We may take note of the following subtlety at this point. If $a$ were positive and 
\[ g(z)g(-z)=\prod_{n=1}^\infty \left(1-\frac{z^2}{t_n^2}\right) \] 
with $0<t_1<t_2 < \cdots $,  we may deduce from \eqref{eq:summ} a counterpart to \eqref{eq:basicid} of the form
\begin{equation} \label{eq:L1ort}    2a f(0)+c\widehat{f}(0)=\int_{-t_1}^{t_1} f(x) dx + \sum_{n=1}^\infty (-1)^n \int_{t_n}^{t_{n+1}} (f(x)+f(-x)) dx \end{equation}
for some real constant $c$. We will then have $c\neq 0$ if and only if $a=\frac{1}{2\Cone}$ and $g=\Phi$. Indeed, if $c= 0$, \eqref{eq:L1ort} would imply that $g=\Phi$ by the characterization of 
$\varphi$ in terms of $L^1$~orthogonality (see \cite[Thm. 3.6]{BCOS}). The fortuitous fact used in the proof of Lemma~\ref{lem:summform} to show that \eqref{eq:summ} would imply $c=0$ for admissible sequences $(t_n)$, is that such sequences are at finite $\ell^2$ distance from the sign changes of $\sgn \cos(\pi x)$ which in the terminology of \cite{LS18} is a real extremal signature for $PW^1$, i.e., its Fourier transform vanishes on $(-\pi, \pi)$. We may conclude that for no other eigenfunction $g$ with $a>0$ and real simple zeros, is it possible to find a real extremal signature  for $PW^1$ with  
sign changes similarly close to $\pm \mathcal{Z}(g)$. 

The following observation allows us to connect eigenfunctions of 
$\mathcal{L}_{a,b}$ with the kind of summation formulas appearing in \S~\ref{sec:quaddiffeq}.

\begin{lemma}
	If $g(z)$ is an eigenfunction of $\mathcal{L}_{a,b}$, normalized by
	$g(0)=1$, then the function
	$A(z)=e^{\frac{a}{2z}}g(z)$ satisfies
	\begin{equation}\label{eq:quadratic2}
		A'(z)A(-z)+A'(-z)A(z) = -\frac{a}{z^2},\qquad z\in \C\setminus \{0\}.
	\end{equation}
\end{lemma}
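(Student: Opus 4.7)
The plan is to recast the identity as the assertion that a certain Wronskian of solutions to a second order linear ODE has the form $a/z^2$, and then to determine the constant by inspection of the Laurent expansion at the origin.

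First I would use the computation~\eqref{eq:diffspecialalt} to show that $A(z)=e^{\frac{a}{2z}}g(z)$ satisfies
\[
z^2A''(z) + 2zA'(z) + \Bigl(b^2z^2 - \frac{a^2}{4z^2} - \lambda\Bigr)A(z) = 0.
\]
Since all coefficients of this ODE are even functions of $z$, the function $B(z)\coloneqq A(-z)$ is another solution (this is immediate upon substituting $z\mapsto -z$). Rewriting the equation in standard form,
\[
A''(z) + \frac{2}{z}A'(z) + \Bigl(b^2 - \frac{a^2}{4z^4} - \frac{\lambda}{z^2}\Bigr)A(z) = 0,
\]
Abel's formula yields that the Wronskian $W(z)\coloneqq A(z)B'(z) - A'(z)B(z)$ satisfies $W'(z) = -\tfrac{2}{z}W(z)$ on $\C^{*}$, so that $W(z) = C/z^2$ for some constant $C$. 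Since $B'(z) = -A'(-z)$, this translates exactly to
\[
A'(z)A(-z) + A'(-z)A(z) = \frac{C}{z^2}
\]
for some constant $C$.

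It remains to identify $C = -a$. For this I would expand both sides at $z=0$ using the cancellation $e^{\frac{a}{2z}}e^{-\frac{a}{2z}}=1$, which gives the fully regular identity
\[
A'(z)A(-z) + A'(-z)A(z) = g(z)g'(-z) + g(-z)g'(z) - \frac{a}{z^2}g(z)g(-z).
\]
Since $g$ is entire with $g(0)=1$, the product $g(z)g(-z) = 1 + O(z^2)$, so the pole part of the right-hand side is exactly $-a/z^2$, giving $C=-a$. The only potential subtlety is confirming that the constant term on the right-hand side vanishes (so that the total right-hand side really is $-a/z^2$ plus terms vanishing at $0$, matching the left-hand side $C/z^2$); this is automatic from the Wronskian argument, but can also be verified directly from the recursion~\eqref{eq:recursion}, which yields $g'(0)=-\lambda/a$ and $g''(0)=-\lambda(2-\lambda)/a^2$, so that $2g'(0) - a(g''(0)-g'(0)^2) = 0$. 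The main (minor) obstacle is keeping the sign conventions straight between $A'(-z)$ and $(A\circ(-\mathrm{id}))'(z)$ when reading off the Wronskian; once this bookkeeping is done correctly, the rest is an essentially one-line computation.
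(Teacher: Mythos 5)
Your proposal is correct and takes essentially the same route as the paper: invoke \eqref{eq:diffspecialalt} to obtain the second-order ODE for $A$, note that $A(-z)$ solves the same equation, apply Abel's formula to conclude the Wronskian is $C/z^2$, and determine $C=-a$ from the Laurent expansion at $0$ using $g(0)=1$ (your recursion check $g'(0)=-\lambda/a$, $g''(0)=-\lambda(2-\lambda)/a^2$ is accurate). One small imprecision worth fixing: the coefficient $2z$ is odd, not even --- the relevant parity criterion is that the coefficient of $A^{(k)}$ should have parity $(-1)^k$ --- though your parenthetical appeal to the direct substitution $z\mapsto -z$ supplies the correct justification regardless.
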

\begin{proof}
	By~\eqref{eq:diffspecialalt}, the functions $A(z)$ and $A(-z)$ are linearly independent and both satisfy the differential equation
	\[z^2h''(z)+2zh'(z)+\Big(b^2z^2-\frac{a^2}{4z^2}\Big)h(z)=\lambda h(z)\]
	Therefore their Wronskian is a constant multiple of $z^{-2}$. The value of 
	the constant can be determined by a direct calculation using the fact that 
	$g(0)=1$.
\end{proof}
\begin{proof}[Proof of Theorem~\ref{thm:representation}]
	We adopt the notation of \S~\ref{sec:quaddiffeq} and write
	\[ \Theta (z) \coloneqq \frac{z}{2} 
	\left(\frac{A'(z)}{A(z)}+\frac{A'(-z)}{A(-z)}\right)\]
	and 
	\[
	\psi(z) \coloneqq  A(z)A(-z) = \prod_{\mu\in \mathcal{Z}(g)}
	\left(1-\frac{z^2}{\mu^2}\right).
	\]
Then \eqref{eq:quadratic2} implies that
	\[
	\psi(z)\Theta(z) = -\frac{a}{2z}\, ,
	\]
and it follows that
	\begin{equation}\label{eq:semirep}
		f(z) \Theta(z) =
		-\frac{a f(0)}{z} + \sum_{\mu\in \mathcal{Z}(g)} \frac{\mu 
			z}{(\mu^2-z^2)} f(\mu)
			\end{equation}
	for every $f$ in $PW^1$. Indeed, we set
	\[
	F(z):= f(z)\Theta(z) +\frac{a f(0)}{z} - \sum_{\mu\in \mathcal{Z}(g)} 
	\frac{\mu
		z}{(\mu^2-z^2)} f(\mu)
	\]
	and see exactly as in the proof of Theorem~\ref{thm:charphi} that $\psi F$ is in $PW^1$ and then that $F\equiv 0$.
	
	We finally observe that all the arguments in the proof of Lemma~\ref{lem:frep} can be
	reversed, and from \eqref{eq:semirep} we obtain
	\[
	\sum_{\mu\in \mathcal{Z}(g)} \sin(\mu \xi) = \frac{a \xi}{2}
	\]
	in the distributional sense for all $-\pi< \xi < \pi$. Using again that $\mathcal{S}\cap PW^1$ is dense in $PW^1$, we 
	arrive at \eqref{eq:summ} by  Plancherel's identity.
\end{proof}

\section{Numerics}\label{sec:numerics}
Although the differential equation~\eqref{eq:diffbasic} completely determines $\Phi$ (and hence $\phi$), one needs to know the values $\Cone$ and $L_{\tau}(1)$ to start computing with it. Moreover, the recursion~\eqref{eq:recursion} is not numerically stable and to compute coefficients $[z^n]\Phi(z)$ for very large values of $n$, one needs to know the initial parameters $\Cone$ and $L_{\tau}(1)$ to high precision. From this datum, together with~\eqref{eq:functional},~\eqref{eq:taunformula},~\eqref{eq:phifunctional}, and~\eqref{eq:phifourier}, one can then compute any reasonable quantities related to $\phi$ and $\Phi$.

We now outline how one may compute $\Cone$ and $L_{\tau}(1)$ to any desired precision, exemplified by the 100 digit precision exhibited in \eqref{eq:precise100}.
We begin by giving a characterization of $\Cone$ and $L_{\tau}(1)$ in terms of the family of differential equations $\mathcal{L}_{a,b}$.
\begin{lemma}\label{lem:computeC}
	Let $a,b>0$, assume that $f$ is a $\lambda$-eigenfunction $f$ of $\mathcal{L}_{a,b}$ whose zeros are real and simple, and let $\kappa_{\pm}$ associated with $f$ be defined as in~\eqref{eq:feqgeneral}. Then the condition
	\begin{equation} \label{eq:admissibilitykappa}
		\kappa_{+}e^{i\pi/4}+\kappa_{-}e^{-i\pi/4} = 0
	\end{equation}
	is equivalent to $ab=\frac{\pi}{4\Cone}$, in which case $\lambda=-\frac{L_{\tau}(1)}{2\Cone}$.
\end{lemma}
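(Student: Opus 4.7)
The plan is to reduce to the case $b = \pi/2$ by rescaling, then to exploit the summation formula of Theorem~\ref{thm:representation} together with Lemma~\ref{lem:summform} to identify the eigenfunction with $\Phi$.

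First I would use the identity $\mathcal{L}_{a,b} R_k = R_k \mathcal{L}_{ka, k^{-1}b}$ from \S\ref{sec:family} with $k = 2b/\pi$: the function $g(z) \coloneqq f(\pi z/(2b))$ is a $\lambda$-eigenfunction of $\mathcal{L}_{2ab/\pi, \pi/2}$ with real simple zeros. A direct substitution $z \mapsto \pi z/(2b)$ in~\eqref{eq:feqgeneral} shows that its associated constants are $\tilde\kappa_\pm = (2b/\pi)\kappa_\pm$, so the condition $\kappa_+ e^{i\pi/4} + \kappa_- e^{-i\pi/4} = 0$ is preserved. Since $ab = \pi/(4\Cone)$ is equivalent to $2ab/\pi = 1/(2\Cone)$, it suffices to treat the case $b = \pi/2$.

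Combined with $\kappa_+^2 - \kappa_-^2 = ia/\pi$ from Lemma~\ref{lem:feqgeneral}, the hypothesis forces $\kappa_+ = i\kappa_-$. Evaluating~\eqref{eq:feqgeneral} at any real zero $\mu$ of $g$ and letting $|\mu| \to \infty$ (using $g(\pm ia/(\pi\mu)) \to 1$) gives $e^{i\pi\mu} \to -i$, so the zeros of $g$ accumulate near $\{2k - 1/2 : k \in \mathbb{Z}\}$ with $O(1/|\mu|)$ corrections, in particular with errors in $\ell^2$. After labeling the positive zeros $\mu_1^+ < \mu_2^+ < \cdots$ with $\mu_k^+ \sim 2k - 1/2$ and the moduli of the negative zeros $\nu_1^- < \nu_2^- < \cdots$ with $\nu_k^- \sim 2k + 1/2$, the interleaved sequence $t_{2k-1} \coloneqq \mu_k^+$, $t_{2k} \coloneqq \nu_k^-$ becomes admissible in the sense of \S\ref{sec:quaddiffeq}.

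Now Theorem~\ref{thm:representation} gives $a f_0'(0) = \sum_\mu (f_0(\mu) - f_0(-\mu))$ for every $f_0 \in PW^1$, which reads Fourier-analytically as $\sum_\mu \sin(\mu\xi) = a\xi/2$ distributionally on $(-\pi, \pi)$. Expressing this in terms of $(t_n)$ yields $\sum_{n\ge1}(-1)^n \sin(t_n\xi) = -a\xi/2$. Lemma~\ref{lem:summform} characterizes $t = \tau$ by the identity $\sum_n (-1)^n \sin(t_n\xi) = -\xi/(4\Cone)$; matching the coefficient of $\xi$ therefore forces $a = 1/(2\Cone)$ and $(t_n) = \tau$. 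The zero set of $g$ then coincides with that of $\Phi$, and since both are eigenfunctions of $\mathcal{L}_{1/(2\Cone), \pi/2}$ normalized to $1$ at $0$—and any such normalized eigenfunction is uniquely determined by its value at $0$ via the recursion~\eqref{eq:recursion}—we obtain $g = \Phi$ and $\lambda = -L_\tau(1)/(2\Cone)$. The converse direction is immediate, since $\Phi$ satisfies~\eqref{eq:functional} with $\kappa_\pm = e^{\pm i\pi/4}/\sqrt{4\pi\Cone}$, and these clearly fulfill the stated condition.

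The hard part will be the admissibility step, specifically excluding an anomalous zero of $g$ near $-1/2$ that would shift the interleaving and introduce a nonlinear $\sin(\nu_1^-\xi)$ term into the signed Fourier series, incompatible with the linear right-hand side $-a\xi/2$ arising from Theorem~\ref{thm:representation}. I expect this to be addressed by a Rouch\'e-style zero count applied to~\eqref{eq:feqgeneral} in bounded strips around the real axis, confirming exactly one zero of $g$ near each candidate $2k - 1/2$ for $k \ge 1$ on the positive side and each $-(2k + 1/2)$ for $k \ge 1$ on the negative side, with no stray zeros elsewhere.
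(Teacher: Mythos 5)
Your strategy follows the paper's own proof closely: rescale to $b=\pi/2$, deduce from~\eqref{eq:feqgeneral} that the zeros of the eigenfunction asymptotically track $2n-1/2$, assemble them into an admissible sequence, and feed that into the summation formula of Theorem~\ref{thm:representation}. However, the sentence ``matching the coefficient of $\xi$ therefore forces $a = 1/(2\Cone)$ and $(t_n) = \tau$'' is circular as written. Lemma~\ref{lem:summform} only says that, for admissible $t$, the identity $\sum_n(-1)^n\sin(t_n\xi)=-\xi/(4\Cone)$ \emph{with that specific constant} is equivalent to $t=\tau$; starting from $\sum_n(-1)^n\sin(t_n\xi)=-a\xi/2$ with $a$ unknown, the lemma by itself yields nothing. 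What is actually needed is the $L^1$-orthogonality characterization of $\varphi$, and this is how the paper proceeds: from the summation formula and admissibility one obtains (via~\eqref{eq:F0} and Plancherel, exactly as in the proof of Lemma~\ref{lem:summform}) the relation~\eqref{eq:L1ort} with $c=0$, i.e.\ $2a\,f(0)=\int_{-t_1}^{t_1}f+\sum_n(-1)^n\int_{t_n}^{t_{n+1}}(f(x)+f(-x))\,dx$ for all $f\in PW^1$, and then one invokes~\cite[Thm.~3.6]{BCOS}. Equivalently, one can test this identity with $f=\psi_t$ (in $PW^1$ by Lemma~\ref{lem:adm}) to get $2a=\|\psi_t\|_{L^1}\ge 1/\Cone$, while bounding the right-hand side gives $|f(0)|\le(2a)^{-1}\|f\|_{L^1}$ for all $f$, hence $2a\le 1/\Cone$; equality forces $\psi_t=\varphi$ by uniqueness of the extremizer, whence $a=1/(2\Cone)$ and $t=\tau$. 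Some such argument must replace the ``matching coefficients'' phrase.

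Your concern about stray or mislabeled zeros is legitimate and worth flagging: the argument that each \emph{large} zero of $g$ sits near some $2k-1/2$ does not by itself rule out a global index shift (say $t_n = n - \tfrac12 + o(1)$ instead of $n+\tfrac12+o(1)$), which would destroy admissibility. The paper's proof is actually silent on this point as well, merely asserting that~\eqref{eq:admissibilitykappa} is ``equivalent to the set of zeros being close to $2n-1/2+o(1)$'' without fixing the labeling, so you are being \emph{more} careful than the source here. Your proposed Rouch\'e count in strips is a reasonable way to close this; another is to track the zeros continuously in $a$ from the regime covered by Lemma~\ref{lem:specimproved}, where the spectrum is known to be real and simple with eigenvalues $\lambda_n\approx n(n+1)$, using~\eqref{eq:feqgeneral} to control the zeros of the corresponding eigenfunctions. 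Either way, this is a genuine loose end in the proof as it stands, and it should be explicitly addressed.
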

\begin{proof}
	The conditions are invariant under rescaling $(a,b)\mapsto (a/k,bk)$, so we may assume that $b=\pi/2$. Then as in the proof of~\eqref{eq:functional}, by~\eqref{eq:feqgeneral} the condition~\eqref{eq:admissibilitykappa} is equivalent to the set of zeros of $f(z)$ being close to $2n-1/2+o(1)$, $n\to \pm \infty$. Then the summation formula from Theorem~\ref{thm:representation} involves an admissible sequence, and so we obtain \eqref{eq:L1ort} with $c=0$. As already noted in our discussion of \eqref{eq:L1ort}, this implies that  $f=\Phi$.
\end{proof}
It will be convenient to fix $b=1$ and use the same notation as in~\S~\ref{sec:spectrum}.
By Corollary~\ref{cor:fourier}, we have $\kappa_{\pm}=\pm 2\pi i\widehat{f}(\mp 1)$, so~\eqref{eq:admissibilitykappa} is equivalent to
	\begin{equation} \label{eq:condition2} \widehat{f}(1) = i\widehat{f}(-1)\end{equation}
and since Legendre polynomials satisfy $P_n(\pm1) = (\pm1)^n$, and $\widehat{f} = \sum_{n\ge0}i^n\xi_n P_n$ on $(-1,1)$, we may recast \eqref{eq:condition2}  as
	\begin{equation*} \label{eq:Cequation0}
	\sum_{n\ge0}i^n\xi_n = \sum_{n\ge0}i^{1-n}\xi_n,
	\end{equation*}
or equivalently, as
	\begin{equation} \label{eq:Cequation}
	\sum_{n\ge0}(-1)^{\lfloor (n-1)/2\rfloor}\xi_n = 0.
	\end{equation}
Here we normalize $f$ by setting $\xi_0=1$. Therefore, to compute $\Cone$ (and $L_{\tau}(1)/(2\Cone)$), it suffices to find $a$ and $\lambda$ for which an eigenvector $\xi$ of the tridiagonal matrix $T$ with entries~\eqref{eq:entries} in addition satisfies~\eqref{eq:Cequation}. The computation is now as follows: for each truncation $T_N$ of $T$ we find the value of $0<a<3/2$ for which the smallest eigenvector satisfies $\sum_{n=0}^{N}(-1)^{\lfloor (n-1)/2\rfloor}\xi_n=0$, and then take the limit as $N\to\infty$. Lemma~\ref{lem:specimproved} then guarantees exponentially fast convergence of $a$ to $\frac{\pi}{4\Cone}$, and $\lambda$ to $-\frac{L_{\tau}(1)}{2\Cone}$. Indeed, we get existence of an intermediate value by verifying that $\sum_{n=0}^{\infty}(-1)^{\lfloor (n-1)/2\rfloor}\xi_n$ changes sign when $a$ goes from $1.44$ to $1.46$. Moreover, it is clear that Lemma~\ref{lem:computeC} applies for $a$ in this range, since $f$ and hence the zeros of $f$ depend continuously on $a$, and the zeros of $\Phi$ are well separated. 

\section{Final remarks} \label{sec:final}
\subsection{Two formulas for \texorpdfstring{$\Cone$}{C}}
\label{sec:formulasC}
We now establish the two identities in \eqref{eq:expressC}. We begin by noting that the second formula 
\[ \frac{1}{\Cone}=2+4\sum_{n=1}^\infty (-1)^n \Big(n+\frac12-\tau_n\Big) \] 
follows at once if we set $\xi=0$ in \eqref{eq:sumC}. To prove the product formula in \eqref{eq:expressC}, we start from 
Theorem~\ref{thm:charphi} which shows that
\[ \phi'(\tau_n)=\frac{(-1)^n}{2\Cone \tau_n^2}, \]
and so
\[  \prod_{m\neq n} \left|1-\frac{\tau_n^2}{\tau_m^2}\right| = \frac{1}{4 \Cone \tau_n}.\]
On the other hand, setting
\[ D\coloneqq \prod_{n=1}^{\infty}\frac{(n+\frac12)^2}{\tau_n^2}, \]
we see that
\[  \prod_{m\neq n} \left|1-\frac{\tau_n^2}{\tau_m^2}\right|=
\frac{D\tau_n^2}{(n+\frac12)^2} \left|1-\frac{\tau_n^2}{(n+\frac12)^2}\right|^{-1}
(4\tau_n^2-1)^{-1} |\cos (\pi \tau_n)| \prod_{m\neq n}
\left|\frac{\tau_m^2-\tau_n^2}{(m+\frac12)^2-\tau_n^2}\right|,\]
so that
\[ \frac{1}{4D \Cone}= \frac{\tau_n^3}{(\tau_n+n+\frac12)(4\tau_n^2-1)}  \frac{|\cos (\pi
\tau_n)|}{|n+\frac12-\tau_n|} \prod_{m\neq n}
\left|\frac{\tau_m^2-\tau_n^2}{(m+\frac12)^2-\tau_n^2}\right|   .\]
Letting $n$ tend to $\infty$, we arrive at the formula
\[ \frac{2}{\pi \Cone}=\prod_{n=1}^\infty \frac{(n+\frac12)^2}{\tau_n^2}. \]
Using finally the Wallis product, we obtain the desired identity 
	\[\Cone = \frac{1}{2}\prod_{n=1}^\infty \frac{\tau_n^2}{n(n+1)}. \]

\subsection{Associated Dirichlet series}
We record several observations about the Dirichlet series
	\[L_{+}(s) \coloneqq \sum_{n=1}^{\infty}\frac{1}{\tau_n^s}\qquad\mbox{and}\qquad L_{-}(s) \coloneqq \sum_{n=1}^{\infty}\frac{(-1)^n}{\tau_n^s},\]
some of whose properties are reminiscent of the more familiar classical $L$-functions.
Note that $L_{-}(s)$ was previously denoted by $L_{\tau}(s)$, when we needed to distinguish $\tau$ from a generic admissible sequence $t$. 
Defined initially for $\re(s) > 1$, both series extend meromorphically to $\C$. Explicitly, we use expansion~\eqref{eq:taunformula} of $\tau_n$ to write
	\[
	L_{\pm}(s) =
	\sum_{n\ge1}\frac{(\pm1)^n}{(n+1/2)^s}+\sum_{n\ge1}\frac{(\pm1)^nsa_1}{(n+1/2)^{s+2}}
	+\sum_{n\ge1}\frac{(\pm1)^n(2a_3s+a_1^2s(s+1))}{2(n+1/2)^{s+4}}+\dots\,,
	\]
and then use $\sum_{n\ge 1}\frac{1}{(n+1/2)^{s}} =
2^s(\zeta(s)(1-2^{-s})-1)$, and $\sum_{n\ge1}\frac{(-1)^n}{(n+1/2)^{s}} = 2^s(L(s,\chi_{4})-1)$, where~$\chi_4$ is the nontrivial primitive character modulo $4$. From this calculation it follows that $L_{+}(s)$ is meromorphic, with simple poles at $1-2k, k\ge0$, whereas $L_{-}(s)$ is entire. 

The special values of $L_{\pm}$ at positive integers have already figured in~\S\ref{sec:quaddiffeq}, namely,
	\begin{align*} 
	\Theta_{\tau}(z) &=-\frac{1}{4\Cone z}+\sum_{m=1}^\infty L_{-}(2m-1) z^{2m-1}\,,\\
	\psi_{\tau}(z) &= \exp\Big(-\sum_{k=1}^{\infty} L_{+}(2k)\frac{z^{2k}}{k}\Big)\,.
	\end{align*}
The identity
	\[\psi_\tau(z) \Theta_\tau(z)=-\frac{1}{4\Cone z}\]
allows one to write $L_{+}(2k)$ via $L_{-}(2m-1)$ and vice versa (showing, in particular, that all these numbers lie in $\Q[\Cone,\pi, L_{\tau}(1)]$). For example, the identity $L_{+}(2) = -4\Cone L_{-}(1)$, written out as
	\[\sum_{n\ge1}\frac{1}{\tau_n^2} = -4\Cone \sum_{n\ge1}\frac{(-1)^n}{\tau_n}\]
can be viewed as an analogue of the formula 
	\[\sum_{n\ge1}\frac{1}{(n-1/2)^2}=-\pi \sum_{n\ge1} \frac{(-1)^n}{(n-1/2)}\] 
relating Euler's solution of the Basel problem and Leibniz's formula for $\pi$.

Going further to values of $L_{\pm}$ at negative integers, we obtain another identity connecting the special values of $L_{+}$ and $L_{-}$:
	\[\mathrm{res}_{s=1-2k}   L_{+}(s) = \frac{2i}{\pi}\frac{L_{-}(2k-1)}{(2\pi i \Cone)^{2k-1}}\,.\]
This is not hard to prove using~\eqref{eq:taunformula} together with Theorem~\ref{thm:zeroexp}.

Next, for the values of $L_{-}$ at negative integers, we claim that~\eqref{eq:dercond} implies that
	\begin{equation} \label{eq:Lodd} 
		L_{-}(-1)=-\frac{1}{4\Cone},\qquad L_{-}(-1-2m)=0,\qquad m=1,2,\dots\, ,
	\end{equation}
which is consistent with our declaration in  \S~\ref{sec:quaddiffeq}  that $L_{\tau}(-1)\coloneqq -\frac{1}{4\Cone}$. 
We may prove \eqref{eq:Lodd} in a familiar way using the Mellin transformation.  Indeed, setting
	\[S(t) \coloneqq 2\sum_{n\ge1}(-1)^n\tau_ne^{-\tau_n^2 t}\, \]
we find that 
	\[\int_{0}^{\infty}xS(x^2)x^{s-1}dx = \sum_{n\ge1}(-1)^{n}\int_{0}^{\infty}x\tau_ne^{-x^2\tau_n^2}x^{2(s/2-1)}dx^2
	= \Gamma(\tfrac{s+1}{2})L_{-}(s)\,.\]
Since $S(t)$ is a pairing between a tempered distribution $\Psi\coloneqq \sum_{n}(-1)^n\tau_n(\delta_{\tau_n}+\delta_{-\tau_n})$ and the Gaussian $x\mapsto e^{-t x^2}$, and since $\widehat{\Psi}+\frac{1}{4\Cone}$ vanishes in $(-\pi,\pi)$ by Lemma~\ref{lem:summform}, we get that $S(1/t) = -\frac{1}{2\Cone} + O(e^{-ct})$ for any $c<\pi^2/4$ when $t\to \infty$. It follows that
\[  \Gamma(\tfrac{s+1}{2})L_{-}(s) = - \frac{1}{4\Cone (s+1)} + \text{an entire function}, \]
and this implies~\eqref{eq:Lodd}.

Finally, for the values of $L_{+}$ at negative even integers,  we conjecture the following symmetry, verified numerically to very high precision.
\begin{conjecture}
	The values $L_{+}(2k)$ satisfy the following symmetry:
	\[L_{+}(-2k) = \frac{L_{+}(2k)}{(2\pi i\Cone)^{2k}}\,,\qquad k\in \Z\,.\]
\end{conjecture}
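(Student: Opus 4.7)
The plan is to compute the asymptotic expansion of $\log\phi(iy)$ as $y\to+\infty$ in two different ways and equate the coefficients. On the one hand, specializing the functional equation~\eqref{eq:phifunctional} to $z=iy$ gives, with $v\coloneqq 1/(2\pi\Cone y)$, the exact identity $4\pi\Cone y^{2}\phi(iy) = e^{\pi y}\Phi(v)^{2} + e^{-\pi y}\Phi(-v)^{2}$; since the first summand dominates as $y\to+\infty$, expanding $\log\Phi(v)$ as a power series in $v$ (with coefficients built from $L_{\pm}(k)$ via the Hadamard product defining $\Phi$) yields
\[\log\phi(iy) = \pi y - \log(4\pi\Cone y^{2}) - \sum_{m\ge 1}\frac{L_{+}(2m)}{m(2\pi\Cone)^{2m}y^{2m}} + \sum_{m\ge 0}\frac{2L_{-}(2m+1)}{(2m+1)(2\pi\Cone)^{2m+1}y^{2m+1}} + O(e^{-2\pi y})\,.\]

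On the other hand, applying the Mellin--Barnes identity $\log(1+u) = \frac{1}{2\pi i}\int_{(c)}\frac{\pi u^{-s}}{s\sin(\pi s)}\,ds$ (valid for $-1<c<0$ and $u>0$) to each factor of the product $\phi(iy) = \prod_{n}(1+y^{2}/\tau_{n}^{2})$ and interchanging sum and integral yields
\[\log\phi(iy) = \frac{1}{2\pi i}\int_{(c)}\frac{\pi L_{+}(-2s)}{s\sin(\pi s)}y^{-2s}\,ds\,,\qquad c\in(-1,-1/2)\,,\]
where the restriction $c<-1/2$ ensures absolute convergence of $\sum_{n}\tau_{n}^{2s}=L_{+}(-2s)$ on the contour. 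Shifting the contour far to the right, where the integrand tends to zero for $y>1$, and collecting residues produces a second asymptotic expansion of $\log\phi(iy)$: the pole of $L_{+}(-2s)$ at $s=-1/2$ contributes the linear term $\pi y$ (using $\mathrm{res}_{u=1}L_{+}=1$); the double pole at $s=0$ produces the $\log y$ and constant terms, encoding the zeta-regularized values $L_{+}(0)=-1$ and $L_{+}'(0)=-\log(2\sqrt{\pi\Cone})$; the poles of $L_{+}(-2s)$ at $s=k+1/2$ for $k\ge 0$ reproduce the odd-power corrections via the residue formula for $L_{+}$ already established in the paper; and the simple poles of $1/\sin(\pi s)$ at positive integers $s=k$ contribute terms proportional to $L_{+}(-2k)/y^{2k}$.

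Matching the two expansions coefficient by coefficient, one finds that the $\log y$, constant, and odd-power terms agree identically, providing consistency checks (in particular yielding the zeta-regularized product $\prod_{n}\tau_{n}=2\sqrt{\pi\Cone}$, compatible with the product formula for $\Cone$ in \S\ref{sec:formulasC}). The equality of the $y^{-2k}$ coefficients, after simplification using $(2\pi i\Cone)^{2k}=(-1)^{k}(2\pi\Cone)^{2k}$, is precisely the conjectured identity $L_{+}(-2k)=L_{+}(2k)/(2\pi i\Cone)^{2k}$. The main obstacle is rigorously justifying the shift of the Mellin--Barnes contour, which requires a polynomial-in-$|\im s|$ growth bound for $L_{+}(s)$ in vertical strips together with control of the residual integral as $\re s\to+\infty$; such a bound should be obtainable from Theorem~\ref{thm:zeros}, which represents $L_{+}(s)$ as a convergent sum of Hurwitz-type zeta functions $\sum_{n\ge 1}(n+1/2)^{-s-2m}$ weighted by polynomial-in-$s$ factors, each satisfying classical Lindel\"of-type estimates. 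Once these bounds are in hand, the uniqueness of asymptotic expansions---both sides differing by $O(e^{-2\pi y})$---forces the term-by-term agreement of coefficients.
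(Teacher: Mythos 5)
The statement you are trying to prove is stated as a \emph{conjecture} in the paper, and the authors give no proof---they explicitly say it has only been ``verified numerically to very high precision'' and that ``the challenge is to prove this relation.'' So there is no proof in the paper to compare against; you are attacking an open problem.

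Your plan is coherent and, as far as I can check, the calculations work out. The two asymptotic expansions of $\log\phi(iy)$ match in the way you claim: I verified that the functional equation~\eqref{eq:phifunctional} specialized to $z=iy$ gives $4\pi\Cone y^2\phi(iy)=e^{\pi y}\Phi(v)^2+e^{-\pi y}\Phi(-v)^2$ with $v=1/(2\pi\Cone y)$, and that expanding $2\log\Phi(v)$ via the Hadamard product produces exactly your first formula, with the $L_+(2m)$ coming from even powers of $v$ and the $L_-(2m+1)$ from odd powers. On the Mellin--Barnes side, the kernel $\pi/(s\sin\pi s)$ is indeed the Mellin transform of $\log(1+u)$ on $-1<\re s<0$, the interchange of sum and integral is justified by absolute convergence for $c\in(-1,-1/2)$, and the residues all come out right: the simple pole of $L_+(-2s)$ at $s=-1/2$ (residue $-1/2$) gives $\pi y$; the double pole at $s=0$ gives $-2\log y+2L_+'(0)$, and the consistency check $L_+'(0)=-\log(2\sqrt{\pi\Cone})$ agrees with the zeta-regularized product $\prod_n\tau_n=2\sqrt{2}\cdot\sqrt{\pi\Cone/2}=2\sqrt{\pi\Cone}$ computed from~\S\ref{sec:formulasC}; the half-integer poles reproduce the odd-power terms using the paper's residue formula $\mathrm{res}_{s=1-2k}L_+=\frac{2i}{\pi}L_-(2k-1)/(2\pi i\Cone)^{2k-1}$; and the integer poles $s=k\ge1$ contribute $-\frac{(-1)^k}{k}L_+(-2k)y^{-2k}$, which upon matching against $-\frac{L_+(2k)}{k(2\pi\Cone)^{2k}}y^{-2k}$ gives exactly the conjecture after using $(2\pi i\Cone)^{2k}=(-1)^k(2\pi\Cone)^{2k}$.

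The one genuine gap is the one you identify: to shift the contour to $\re s=K+1/4$ and control the residual integral, you need $|L_+(-2\sigma-2it)|$ to grow at most polynomially in $|t|$ (in fact anything slower than $e^{\pi|t|}$ would do, since $1/|\sin\pi s|\sim 2e^{-\pi|t|}$). Your remark that this ``should be obtainable from Theorem~\ref{thm:zeros}'' via a convergent sum of Hurwitz-type zetas is a bit glib: the formal expansion $L_+(s)=\sum_{m\ge0}Q_m(s)\sum_n(n+\frac12)^{-s-2m}$ has $\deg Q_m=m$, so as a \emph{series} it does not converge uniformly on vertical lines. What does work is a \emph{finite} truncation: write $\tau_n^{-s}=(n+\frac12)^{-s}(1-w_n)^{-s}$ with $w_n=\rho(1/(n+\frac12))/(n+\frac12)\lesssim n^{-2}$, expand $(1-w_n)^{-s}$ to a fixed order $K$ in $w_n$ (and each $w_n^j$ to order $K$ in $(n+\frac12)^{-2}$), and bound the remainder by the Lagrange form: since $\tau_n>0$ one has $|(1-w_n)^{-s}|=(1-w_n)^{-\sigma}$ independent of $t$, and the tail is $O(|s|^{K+1}n^{-2K-2})$ uniformly on any bounded $\sigma$-strip. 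This exhibits $L_+$ on $\{\re s>-2K-1\}$ as a finite sum of $Q_k(s)\cdot 2^{s+2k}\big((1-2^{-s-2k})\zeta(s+2k)-1\big)$ plus an absolutely convergent Dirichlet-like series that is $O(|s|^{K+1})$, and the $\zeta$ terms have polynomial growth by standard estimates. With this bound in hand, the contour shift and the matching of the two expansions (term-by-term, for each $K$) completes the argument. You should spell out this truncation argument if you want a rigorous proof---as written it is the one place where the argument is not fully in place.
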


\subsection{An integrality phenomenon}
Looking at the Taylor series of $\phi$ and $\Phi$, we may note that whereas $[z^n]\Phi(z)$ lies in $\Q[\pi, L_{\tau}(1), \Cone]$ and has coefficients with large denominators as $n\to\infty$, the coefficients $[z^n]\phi(z)$ appear to lie in $\Z[\pi, L_{\tau}(1), \Cone]$. This observation can be made more precise if we consider the family of differential operators from \S~\ref{sec:family}, leading to the following conjecture.
\begin{conjecture} \label{conj:integrality}
Let $u_n\in\Q[b,\lambda]$ be a sequence defined by the recursion
	\[u_{n+1}=\frac{4n+2}{n+1}u_{n}(n(n+1)-\lambda)+b^2\frac{4n}{n+1}u_{n-1},\qquad n\ge0,\]
where $u_{-1}=0$ and $u_0=1$. Then $u_{n}\in\Z[b,\lambda]$ for $n\ge0$.
\end{conjecture}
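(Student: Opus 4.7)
The plan is to view the conjecture as an integrality statement about the Taylor coefficients of the \emph{symmetric square} of an eigenfunction of $\mathcal{L}_{1,b}$. Let $f$ be the formal $\lambda$-eigenfunction of $\mathcal{L}_{1,b}$ with $f(0) = 1$, and set $F(z) = f(z) f(-z)$, $H(w) = F(\sqrt{w})$. The usual symmetric-square procedure applied to $\mathcal{L}_{1,b} f = \lambda f$ shows that $H$ satisfies the third-order ODE
\begin{equation*}
4 w^3 H''' + 18 w^2 H'' + \bigl(4 b^2 w^2 + (12 - 4\lambda) w - 1\bigr) H' + (4 b^2 w - 2\lambda) H = 0,
\end{equation*}
and thanks to the constant term $-1$ in the coefficient of $H'$, the recursion for $u_n := [w^n] H(w)$ is exactly the one in the conjecture. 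Thus $u_n \in \Q[b, \lambda]$ with denominator a priori dividing $(n+1)!$, and the conjecture asserts that all these denominators collapse.

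Reducing the recursion modulo $n+1$ (using $2n+1 \equiv -1$ and $n \equiv -1$), an induction on $n$ reduces the conjecture to the single congruence $\lambda u_n \equiv 2 b^2 u_{n-1} \pmod{n+1}$ in $\Z[b, \lambda]$. To make the $u_n$ tractable, I would first clear Gamma-function denominators from $f$: writing $f(z) = \sum \beta_n z^n/n!$ converts the recursion $(n+1)\alpha_{n+1}=(n(n+1)-\lambda)\alpha_n+b^2\alpha_{n-2}$ into
\begin{equation*}
\beta_{n+1} = (n(n+1) - \lambda) \beta_n + n(n-1) b^2 \beta_{n-2},\qquad \beta_0 = 1,
\end{equation*}
so that $\beta_n \in \Z[b,\lambda]$ immediately. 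Extracting the coefficient of $z^{2n}$ from $F = f(z) f(-z)$ then yields
\begin{equation*}
u_n = \frac{T_n}{(2n)!}, \qquad T_n := \sum_{j=0}^{2n} (-1)^j \binom{2n}{j} \beta_j \beta_{2n-j} \in \Z[b,\lambda],
\end{equation*}
so the conjecture takes the appealing combinatorial form $(2n)! \mid T_n$ in $\Z[b,\lambda]$: a large divisibility for an alternating bilinear sum of the $\beta$'s.

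Encouragement for this approach comes from the case $b = 0$, where the recursion collapses to $u_n|_{b=0} = \binom{2n}{n} \prod_{k=0}^{n-1}(k(k+1) - \lambda)$. Setting $W_n := \lambda u_n - 2 b^2 u_{n-1}$, one finds
\begin{equation*}
W_n|_{b=0}/(n+1) = C_n \,\lambda \prod_{k=0}^{n-1}(k(k+1) - \lambda) \in \Z[\lambda],
\end{equation*}
where $C_n = \binom{2n}{n}/(n+1)$ is the $n$-th Catalan number. This is strong evidence that $W_n/(n+1) \in \Z[b, \lambda]$ for every $b$, with Dyck-path combinatorics providing the backbone of integrality. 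The plan is then to hunt for an explicit path-counting formula $u_n = \sum_{P} \mathrm{wt}_P(\lambda, b^2)$ over suitably decorated Dyck-like paths $P$ of semilength $n$, which would simultaneously match the base case and make the $(n+1)$-divisibility of $W_n$ transparent.

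The main obstacle will be producing this combinatorial formula, or equivalently, proving $(2n)! \mid T_n$ directly. A natural fallback is Dwork's theory of $p$-integrality of solutions of linear differential equations: the ODE for $H$ has polynomial coefficients in $\Z[w, b, \lambda]$, and one hopes to establish a Frobenius-type congruence modulo $p$ between $H(w)$ and $H(w^p)$, yielding $p$-integrality for every prime and hence the conjecture. The difficulty is that the operator is of third order and that $\lambda$ enters as a free accessory parameter, placing us outside the hypergeometric regime in which the Dwork--Christol--Katz integrality criteria are most developed. A further avenue would be to exploit the commutation relation of Lemma~\ref{lem:commrel}: the involutions $U_{\pm}$ act on the space of eigenfunctions of $\mathcal{L}_{1,b}$ and hence on $F$, producing an extra symmetry of $H$ that may give the structure needed to force $(2n)! \mid T_n$.
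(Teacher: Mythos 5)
Note first that the statement you address is labeled as a \emph{conjecture} in the paper, and the authors explicitly leave it open: they observe the integrality phenomenon and remark that ``It would be interesting to know what explains the integrality in this case.'' There is therefore no proof in the paper to compare against, and your proposal must stand or fall on its own.

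Evaluated that way, it is a well-organized research plan rather than a proof, and you are candid about this. The preliminary reductions you carry out are correct: the third-order ODE for $H(w)=f(\sqrt{w})\,f(-\sqrt{w})$ does reproduce the stated recursion for the $u_n$; the reformulation $u_n=T_n/(2n)!$ with $T_n=\sum_{j=0}^{2n}(-1)^j\binom{2n}{j}\beta_j\beta_{2n-j}$ and $\beta_j\in\Z[b,\lambda]$ is a clean restatement of the conjecture as the bilinear divisibility $(2n)!\mid T_n$; and the closed form $u_n\big|_{b=0}=\binom{2n}{n}\prod_{k=0}^{n-1}\bigl(k(k+1)-\lambda\bigr)$, with Catalan numbers emerging on division by $n+1$, is a genuine and encouraging sanity check. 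But the inductive step is not closed. Reducing modulo $n+1$ you actually need $(n+1)\mid 2\bigl(\lambda u_n-2b^2u_{n-1}\bigr)$; the congruence $\lambda u_n\equiv 2b^2u_{n-1}\pmod{n+1}$ is a sufficient (and, when $n+1$ is odd, equivalent) reformulation, not a demonstration, and you verify it only at $b=0$. None of the three strategies you sketch for general $b$ --- a decorated Dyck-path weight formula, a Dwork--Christol--Katz style $p$-adic Frobenius congruence, or leveraging the $U_{\pm}$ involutions --- is carried out, and you yourself identify the structural obstructions (third order, irregular singularities, the free accessory parameter $\lambda$) that place the problem outside the established hypergeometric integrality criteria. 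The central integrality assertion thus remains unproved; what you have is a promising set of equivalent formulations and a verified base case.
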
 
The above recursion is satisfied by the coefficients of $f(z)f(-z)=\sum_{n\ge0}u_nz^{2n}$, where~$f$ is any eigenfunction of $\mathcal{L}_{1,b}$. We may compare the above integrality property with Ap\'ery's proof of irrationality of~$\zeta(3)$, where a similar phenomenon plays an important role (see~\cite{zagier2009integral} for an analysis of Ap\'ery-like recurrences with integrality properties). Note that in Ap\'ery-like recursions the associated differential equation is a period equation for a family of algebraic varieties, so it has regular singularities, whereas in Conjecture~\ref{conj:integrality} the differential equation has irregular singularities. It would be interesting to know what explains the integrality in this case.

\bibliographystyle{amsplain}
\bibliography{pwpeval}

\end{document}